\title[The filled Julia set of a Drinfeld module]{The filled Julia set of a Drinfeld module and uniform bounds for torsion}\author{Patrick Ingram}
\email{pingram@math.colostate.edu}
\address{Department of Mathematics, Colorado State University, Fort Collins, USA}
\newcommand{\QQ}{\mathbb{Q}}
\newcommand{\ZZ}{\mathbb{Z}}
\newcommand{\CC}{\mathbb{C}}
\newcommand{\RR}{\mathbb{R}}
\newcommand{\FF}{\mathbb{F}}
\newcommand{\PP}{\mathbb{P}}
\renewcommand{\AA}{\mathbb{A}}
\newcommand{\Ocal}{\mathcal{O}}
\newcommand{\Gal}{\operatorname{Gal}}
\newcommand{\Aberk}[1]{\mathbb{A}^1_{\mathrm{Berk}}}
\newcommand{\Pberk}[1]{\mathbb{P}^1_{\mathrm{Berk}}}
\newcommand{\fj}[1]{{#1}^\mathrm{FJ}}
\renewcommand{\epsilon}{\varepsilon}
\newtheorem{theorem}{Theorem}[section]
\newtheorem{proposition}[theorem]{Proposition}
\newtheorem{lemma}[theorem]{Lemma}
\newtheorem{conjecture}[theorem]{Conjecture}
\theoremstyle{remark}
\newtheorem{remark}[theorem]{Remark}
\theoremstyle{definition}
\newtheorem{definition}[theorem]{Definition}
\begin{document}
\maketitle
\begin{abstract}
If $\phi$ is a Drinfeld module over a local function field $L$, we may view $\phi$ as a dynamical system, and consider its filled Julia set $\fj{\phi}(L)$.  If $\phi^0(L)$ is the connected component of the identity, relative to the Berkovich topology, we give a characterisation of the component module $\fj{\phi}(L)/\phi^0(L)$ which is analogous to the Kodaira-N\'{e}ron characterisation of the special fibre of a N\'{e}ron model of an elliptic curve over a non-archimedean field.  In particular, if $L$ is the fraction field of a discrete valuation ring, then the component module is finite, and moreover trivial in the case of good reduction.

In the context of global function fields, the filled Julia set may be considered as an object over the ring of finite adeles.  In this setting we formulate a conjecture about the structure of the (finite) component module which, if true, would imply Poonen's Uniform Boundedness Conjecture for torsion on Drinfeld modules of a given rank over a given global function field.  Finally, we prove this conjecture for certain families of Drinfeld modules, obtaining uniform bounds on torsion in some special cases.
\end{abstract}

\section{Introduction}
\label{sec:intro}

Let $F/\QQ$ be a number field, and let $E/F$ be an elliptic curve.  It follows from the Mordell-Weil Theorem that the torsion subgroup $E^{\mathrm{Tors}}(F)$ of the group of $F$-rational points on $E$ is finite, but something much stronger is true.  A well-known result of Mazur \cite{mazur}, in the case $F=\QQ$, and of Merel \cite{merel}, more generally, ensures that the size of this group is bounded uniformly as $E/F$ varies.  Indeed, Merel's Theorem states that this bound depends not on the number field $F$, but only on the degree $[F:\QQ]$.

Now let $K$ be the function field of a curve $X$ over a finite field $\FF_q$, and let $A$ be the ring of regular functions at some point $\infty\in X(\FF_q)$.  If $L/K$ is a finite extension, then a Drinfeld $A$-module over $L$ is a ring homomorphism
\[\phi:A\to\operatorname{End}_L(\mathbb{G}_\mathrm{a})\]
satisfying certain additional conditions (see Section~\ref{sec:prelims} for more details). We write $\phi(L)$ for the corresponding $A$-module structure on $\mathbb{G}_\mathrm{a}(L)$.  Although $\phi(L)$ is never finitely generated, it is a theorem of Poonen~\cite{poonen} that $\phi(L)$ is tame, and hence the torsion submodule $\phi^{\mathrm{Tors}}(L)$ is finite.  It is natural to ask how uniformly one might bound its size.
\begin{conjecture}[Poonen~\cite{poonenunif}]\label{conj:poonenconj}
Let $L/K$ be a finite extension.  Then there is a bound on the quantity $\#\phi^{\mathrm{Tors}}(L)$, as $\phi/L$ varies over Drinfeld $A$-modules of a given rank.
\end{conjecture}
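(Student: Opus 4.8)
The plan is to deduce Conjecture~\ref{conj:poonenconj} from the structure theory of the filled Julia set: first to reduce the desired uniform bound to a uniform statement about the adelic component module, and then to prove that statement. Write $r$ for the fixed rank. Since every point of $\phi^{\mathrm{Tors}}(L)$ has finite forward orbit under the additive polynomial $\phi_a$ for any non-constant $a\in A$ --- it is pre-fixed if its order divides a power of $a$, and periodic otherwise --- it lies in the filled Julia set at every place of $L$, so the diagonal gives an injection of $A$-modules $\phi^{\mathrm{Tors}}(L)\hookrightarrow\fj{\phi}(\Adele_L^f)$ into the filled Julia set over the ring of finite adeles. The local results recalled above equip each place $v$ with a finite component module $\Phi_v(\phi)=\fj{\phi}(L_v)/\phi^0(L_v)$, trivial at places of good reduction, and assemble them into a finite $A$-module $\Phi(\phi)=\bigoplus_v\Phi_v(\phi)$ fitting in $0\to\phi^0(\Adele_L^f)\to\fj{\phi}(\Adele_L^f)\to\Phi(\phi)\to0$.

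\textbf{Disposing of the identity component.} Fix two finite places $v_0,v_1$ of $L$ of smallest norm lying over distinct primes $\pf_{v_0},\pf_{v_1}$ of $A$; they depend only on $L$. For $i=0,1$, reduction to the special fibre of the appropriate model at $v_i$ is injective on the part of $\phi^0(L_{v_i})^{\mathrm{Tors}}$ of order prime to $\pf_{v_i}$ and lands in a finite group of order bounded in terms of $\#k_{v_i}$ and $r$; since $\pf_{v_0}\neq\pf_{v_1}$, the submodule $M$ of $\phi^{\mathrm{Tors}}(L)$ consisting of points lying in the identity component at both $v_0$ and $v_1$ therefore has order at most some $C(r,L)$. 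As $M$ is exactly the kernel of $\phi^{\mathrm{Tors}}(L)\to\Phi_{v_0}(\phi)\times\Phi_{v_1}(\phi)$, feeding this into the exact sequence yields
\[
\#\phi^{\mathrm{Tors}}(L)\ \le\ C(r,L)\cdot\#\operatorname{im}\big(\phi^{\mathrm{Tors}}(L)\to\Phi_{v_0}(\phi)\big)\cdot\#\operatorname{im}\big(\phi^{\mathrm{Tors}}(L)\to\Phi_{v_1}(\phi)\big),
\]
so Conjecture~\ref{conj:poonenconj} is reduced to bounding, uniformly over rank-$r$ Drinfeld modules $\phi/L$, the image of the global torsion in the component module at a fixed place --- which is the content of the structural conjecture on $\Phi(\phi)$ formulated in this paper.

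\textbf{The structural conjecture, and the main obstacle.} To attack it I would first use the non-archimedean (Tate--Drinfeld) uniformization of a Drinfeld module with potentially bad reduction at $v$ to give an explicit description of $\Phi_v(\phi)$: it is, up to bounded ambiguity, a cyclic $A$-module whose invariant factor is cut out by the valuations $v(\Delta)$ of a minimal model. Its raw order is not bounded as $\phi$ varies --- exactly as the component group of a Tate curve is not --- so the whole difficulty is to bound only the torsion-relevant part, namely the image of a global torsion submodule. The natural route is the Drinfeld analogue of the method of Mazur and Merel: a global torsion point of order $\mathfrak n$ yields an $L$-rational point on the Drinfeld modular curve parametrizing rank-$r$ modules carrying such a point, whose gonality --- or the rank of a suitable Eisenstein quotient of its Jacobian --- grows with $\deg\mathfrak n$; an effective, uniform bound of this kind would rule out the point once $\deg\mathfrak n$ is large, leaving only finitely many primes, which the explicit description of $\Phi_v(\phi)$ along a single $\pf$-tower then handles. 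The reduction carried out above is formal once the local theory of this paper is in hand; the main obstacle is this global input --- an effective gonality or Eisenstein-quotient estimate for Drinfeld modular curves over $L$ --- which is the Drinfeld-module counterpart of the hard core of Merel's theorem, and where essentially all the difficulty of Conjecture~\ref{conj:poonenconj} lies.
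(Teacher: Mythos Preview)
The statement you are addressing is Conjecture~\ref{conj:poonenconj}, stated in the paper as an \emph{open conjecture}; the paper does not prove it and explicitly remarks that general results toward it are scarce. What the paper does instead is formulate Conjecture~\ref{question} and prove, via Theorem~\ref{th:szpiroresult}, that Conjecture~\ref{question} implies Conjecture~\ref{conj:poonenconj}. So there is no ``paper's own proof'' to compare against, and your proposal does not supply one either: you yourself identify the Drinfeld analogue of the Mazur--Merel input as the ``main obstacle,'' which is precisely the content of the conjecture you set out to establish.

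Two specific points. First, your claim that bounding the image of global torsion in $\Phi_{v_0}(\phi)\times\Phi_{v_1}(\phi)$ at two \emph{fixed} places ``is the content of the structural conjecture on $\Phi(\phi)$ formulated in this paper'' is incorrect. Conjecture~\ref{question} asks for $N\geq 0$ and an ideal $\mathfrak{a}\subseteq A$ with $\mu(\phi,N,\mathfrak{a})\geq 1/q$ uniformly; this is a Szpiro-type statement about the \emph{proportion of $h(j_\phi)$} supported at finite places where the whole component module is $\mathfrak{a}$-torsion, not about the image of global torsion at one or two preselected places. The paper's route is therefore genuinely different from yours, and has the advantage of being provable for families (Theorem~\ref{th:simplefams}), whereas your reduced statement---a uniform bound on the image of $\phi^{\mathrm{Tors}}(L)$ inside a module $\Phi_{v_0}(\phi)$ of unbounded size---seems to carry essentially the full difficulty of the original conjecture. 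Second, a minor correction: $\Phi_v(\phi)$ is not ``up to bounded ambiguity, a cyclic $A$-module''; Theorem~\ref{th:drinfeldKN} shows it is $\bigoplus_{i=1}^{s}A/a_iA$ with $s$ the corank of the stable reduction, which may be as large as $r-1$.
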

Results in the direction of Conjecture~\ref{conj:poonenconj} are scarce, although Poonen has proven the statement for twists of a fixed Drinfeld module~\cite{poonenunif},  and Armana~\cite{armana} and P\'{a}l~\cite{pal} have proven general results for Drinfeld modules of rank 2, subject to some additional constraints, using deep methods related to the approach used successfully for elliptic curves.

The purpose of the present paper is to note that, if one views Drinfeld modules as arithmetic dynamical systems, then Conjecture~\ref{conj:poonenconj} follows from a conjecture about the structure of the adelic filled Julia sets.  More concretely, for each place $v$ of $L$, let $L_v$ denote the completion of $L$ at $v$, and let $\CC_v$ be the completion of an algebraic closure of $L_v$.  Note that the sum of two $v$-adically bounded submodules of $\phi(\CC_v)$ is again bounded, and so there is a unique maximal bounded submodule of $\phi(\CC_v)$, which we call the \emph{filled Julia set} $\fj{\phi}(\CC_v)$ of $\phi(\CC_v)$.  If $\Aberk{\CC_v}$ denotes the Berkovich analytic space associated to $\CC_v$, then there is a natural embedding $\CC_v\hookrightarrow \Aberk{\CC_v}$, and we denote by $\phi^0(\CC_v)$ the set of $\CC_v$-rational points in the connected component of $\overline{\fj{\phi}(\CC_v)}$ containing the identity $0$, where $\overline{X}$ is the closure of $X\subseteq \Aberk{\CC_v}$ relative to the Berkovich topology.
  Similarly let $\fj{\phi}(L_v)$ and $\phi^0(L_v)$ denote the $L_v$-rational points in $\fj{\phi}(\CC_v)$ and $\phi^0(\CC_v)$.  It turns out that the quotient $\fj{\phi}(L_v)/\phi^0(L_v)$ has the structure of a finite $A$-module (see Theorem~\ref{th:drinfeldKN} below).  Our methods are strongly inspired by earlier work of Ghioca~\cite{ghioca1},  although it appears that the central object of study, the component module $\fj{\phi}(L_v)/\phi^0(L_v)$, has not previously appeared in the literature.

Given a Drinfeld $A$-module $\phi$ over $L$, we will define in Section~\ref{sec:prelims} a point $j_\phi$ in a certain weighted projective space such that $\phi$ is $\overline{L}$-isomorphic to $\psi$  if and only if $j_\phi=j_\psi$.  In the case of a Drinfeld $\FF_q[T]$-module $\phi$, with
\[\phi_T(x)=Tx+a_1x^q+\cdots +a_rx^{q^r},\]
we will simply take
\[j_\phi=[a_1:a_2:\cdots:a_r],\]
where the $i$th entry has weight $q^i-1$.  We will define below a quantity $0\leq \mu(\phi, N, \mathfrak{a})\leq 1$, for any integer $N\geq 0$ and any ideal $\mathfrak{a}\subseteq A$.  Although we refer the reader to Definition~\ref{def:mudef} for the precise description, 
 the inequality $\mu(\phi, N, \mathfrak{a})\geq \epsilon$ may be informally described as follows.  The point $j_\phi$ is a point in a weighted projective space, and so has a height $h(j_\phi)$ consisting of contributions from finite and infinite places of the field of definition. The inequality $\mu(\phi, N, \mathfrak{a})\geq \epsilon$ asserts that at least proportion $\epsilon$ of the finite part of the height comes from places where the component module $\fj{\phi}(L_v)/\phi^0(L_v)$ is ``small'', annihilated by $\mathfrak{a}$, and up to $N$ other places.

\begin{theorem}\label{th:szpiroresult}
Fix $A$ and $L$ as above, let $N\geq 0$, and let $\mathfrak{a}\subseteq A$ be any proper ideal.  If $\phi/L$ is a Drinfeld module of rank $r$ satisfying $\mu(\phi, N, \mathfrak{a})\geq 1/q$,  then $\# \phi^{\mathrm{Tors}}(L)$ is bounded by a constant which depends only on $N$, $\mathfrak{a}$, and $r$.
\end{theorem}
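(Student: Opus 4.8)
The plan is to reduce the desired bound to a bound on a single cyclic torsion point, and then to play the resulting local constraints against the height of $j_\phi$. Since $\phi^{\mathrm{Tors}}(L)$ is a finitely generated torsion $A$-module of rank at most $r$, it has an annihilator ideal $\mathfrak{m}$, contains a point $P$ of exact order $\mathfrak{m}$ (structure theorem over the Dedekind domain $A$), and satisfies $\#\phi^{\mathrm{Tors}}(L)\le\#(A/\mathfrak{m})^r$; thus it suffices to bound $\#(A/\mathfrak{m})$. Write $\mathfrak{m}=\mathfrak{m}_0\mathfrak{n}$ with $\mathfrak{m}_0$ supported on the primes dividing $\mathfrak{a}$ and $\mathfrak{n}$ coprime to $\mathfrak{a}$. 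The factor $\#(A/\mathfrak{m}_0)$ is bounded by a standard argument at the finitely many places $v$ of $L$ over the support of $\mathfrak{a}$: the torsion of $\phi^0(L_v)$ of order a power of the residue prime is annihilated by a bounded power of that prime (by the ramification of $L_v$ over $K$ and the theory of formal Drinfeld modules), while the rest of $\phi(L)$ injects into the finite module $\fj{\phi}(L_v)/\phi^0(L_v)$. So I may assume $\mathfrak{m}$ is coprime to $\mathfrak{a}$ and aim only to bound $\#(A/\mathfrak{m})$.

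The local input is Theorem~\ref{th:drinfeldKN}. A torsion point lies in $\fj{\phi}(L_v)$ at every place $v$, so $P$ determines a class in $\fj{\phi}(L_v)/\phi^0(L_v)$; at the finitely many $v$ at which this module is annihilated by $\mathfrak{a}$, the class is killed by $\mathfrak{a}+\mathfrak{m}=A$, hence $P\in\phi^0(L_v)$. Up to that finite component module, $\phi^0(L_v)$ is controlled by the stable reduction of $\phi$ at $v$, which by Drinfeld uniformisation is an extension of a Drinfeld module of some rank $r-s_v\le r$ having good reduction by a lattice of rank $s_v$. A torsion point in $\phi^0(L_v)$ of order prime to the residue characteristic of $v$ then injects, via this uniformisation, into the torsion of the rank-$(r-s_v)$ good part over a bounded extension of the residue field of $L_v$. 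This bounds $\#(A/\mathfrak{m})$ in terms of the residue cardinality $q_v$ at any such $v$, and ties $v$ to its contribution $\lambda_v(j_\phi)$ to $h(j_\phi)$: a small $\lambda_v(j_\phi)$ means $\phi$ is close to potentially good at $v$ and the torsion of $\phi^0(L_v)$ is correspondingly small, whereas a large $\lambda_v(j_\phi)$ forces $s_v\ge 1$ and a large residue field.

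The global step, where the hypothesis enters, combines these local facts through the product formula for $j_\phi$ in weighted projective space. By Definition~\ref{def:mudef}, $\mu(\phi,N,\mathfrak{a})\ge 1/q$ asserts that at least a $1/q$-fraction of the finite part $\sum_{v\nmid\infty}\lambda_v(j_\phi)$ of $h(j_\phi)$ comes from places where $\fj{\phi}(L_v)/\phi^0(L_v)$ is annihilated by $\mathfrak{a}$, together with at most $N$ further places. At each such place the torsion point $P$ is pinned down by the previous paragraph, so a Szpiro-type comparison for $j_\phi$ — bounding $h(j_\phi)$ below in terms of $\#(A/\mathfrak{m})$ through the places that $\mu$ ``sees'', and bounding it above in terms of the degenerate reduction that the existence of $P$ is forced to produce — yields a bound on $\#(A/\mathfrak{m})$ in terms of $N$, $\mathfrak{a}$ and $r$, with the $N$ exceptional places and the factor $\#(A/\mathfrak{m}_0)$ contributing only additively to the final constant. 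The main obstacle is exactly this synthesis: one must make quantitative the trade-off between how much torsion a degenerate place can conceal from being rational over $L$ and how much of the height of $j_\phi$ that place must carry, and check that $1/q$ is precisely the proportion at which the resulting inequality closes; identifying the correct model of $j_\phi$, so that the $\lambda_v(j_\phi)$ are the right local quantities, is the technical heart.
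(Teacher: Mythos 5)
Your opening local observation is the right one, and it matches the paper's device: if the component module $\fj{\phi}(L_v)/\phi^0(L_v)$ is killed by $\mathfrak{a}$ and the torsion point has order prime to $\mathfrak{a}$ (in the paper: if the point already lies in $\mathfrak{a}\phi^{\mathrm{Tors}}(L)$), then the point lies in $\phi^0(L_v)$ at that place. But the two steps you build on top of this do not work. First, bounding $\#(A/\mathfrak{m})$ "in terms of the residue cardinality $q_v$ at any such $v$" cannot give uniformity: the hypothesis $\mu(\phi,N,\mathfrak{a})\geq 1/q$ only says that a $1/q$-proportion of the finite part of $h(j_\phi)$ comes from places with $\mathfrak{a}$-torsion component module; it supplies no place of bounded residue degree, and those places may all have enormous residue fields, in which case a bound by $q_v$ is vacuous. (Relatedly, "a large $\lambda_v(j_\phi)$ forces \dots a large residue field" is false: $v(j_\phi)$ and $\deg(v)$ are unrelated.) Second, the reduction to $\mathfrak{m}$ coprime to $\mathfrak{a}$ is a genuine gap: there is no "standard local argument" bounding the $\mathfrak{a}$-primary torsion at the places over $\operatorname{supp}(\mathfrak{a})$, because torsion of a Drinfeld module over a fixed local field is unbounded (the component module has size growing with $v(j_\phi)$, by Lemma~\ref{lem:vjbound}/Theorem~\ref{th:drinfeldKN}), so the $\mathfrak{m}_0$-part is as hard as the original problem. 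The paper avoids this entirely: it bounds $\#\,\mathfrak{a}\phi^{\mathrm{Tors}}(L)$ by the global argument and then loses only the factor $\operatorname{Norm}(\mathfrak{a})^r$ via $\phi^{\mathrm{Tors}}(L)/\mathfrak{a}\phi^{\mathrm{Tors}}(L)\hookrightarrow (A/\mathfrak{a})^r$.

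The deeper issue is that the "Szpiro-type comparison" you defer to is precisely the proof, and your sketch contains none of the machinery that makes it close. The paper's mechanism is: for a nonzero point $x\in\mathfrak{a}\phi^{\mathrm{Tors}}(L)$ one has $\sum_{v\in M_L}\lambda_{\phi,v}(x)=0$ by the product formula; at the $\mu$-good finite places $\lambda_{\phi,v}(x)\geq j_{\phi,v}$ (your observation); at the remaining finite places one only has the trivial bound $\lambda_{\phi,v}(x)\geq -\bigl(\tfrac{1-q^{-(r-1)}}{q-1}\bigr)j_{\phi,v}$; and, crucially, at the infinite places and the $N$ exceptional places — where $\phi^0(\CC_v)=\{0\}$ and the component module is not finite, so the $\mathfrak{a}$-annihilation device gives nothing — one needs the Lehmer-type bound $\lambda_{\phi,v}(x)\geq (1-q^{-1})j_{\phi,v}-\delta_v$, which holds not for all torsion but for a subgroup of index bounded by $q^{4r^2\sum\deg(T_i)([L:K]+N)}$ extracted by the pigeonhole/genericity argument of Lemma~\ref{lem:localchoose}. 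Summing these and using $\mu\geq 1/q$ gives $0\geq q^{-r}h(j_\phi)-\sum_v\delta_v$, which is the contradiction when $h(j_\phi)$ is large; the small-height case is then handled separately by Northcott for the weighted projective height together with Poonen's bound for twists (Lemma~\ref{lem:largej}). Your proposal has no counterpart to the infinite-place lower bounds, the generic-subgroup extraction, or the small-height reduction, and the claimed upper bound on $h(j_\phi)$ "in terms of the degenerate reduction that $P$ forces" is not a step that exists in any identifiable form; so as written the argument does not go through.
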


We will see below that, for any given $\phi/L$, we may take $N$ to be the number of places at which $\phi$ has bad reduction
 and thereby obtain $\mu(\phi, N, \mathfrak{a})=1$.  Theorem~\ref{th:szpiroresult} thus contains a theorem of Ghioca~\cite{ghioca1} confirming Conjecture~\ref{conj:poonenconj} for Drinfeld modules with bad reduction at a bounded number of places.  The strength in Theorem~\ref{th:szpiroresult} over the earlier result is that, while one may easily construct Drinfeld modules with arbitrarily many places of bad reduction, thereby showing that the results in \cite{ghioca1} are strictly weaker than the claim in Conjecture~\ref{conj:poonenconj}, it seems at least plausible that the conditions of  Theorem~\ref{th:szpiroresult} are met with some uniformity.  In particular, an affirmative answer to the following conjecture would yield a proof of Conjecture~\ref{conj:poonenconj}.

\begin{conjecture}\label{question}
Fix a finite extension $L/K$.  Then there exists an $N\geq 0$ and an $\mathfrak{a}\subseteq A$ such that $\mu(\phi, N, a)\geq 1/q$ for every Drinfeld $A$-module $\phi/L$.
\end{conjecture}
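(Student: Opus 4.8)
Since Conjecture~\ref{question} is a conjecture, what follows is a strategy rather than a proof; I will indicate where the real difficulty lies. Observe first that if $N$ were permitted to depend on $\phi$ the statement would be trivial --- take $N$ equal to the number of places of bad reduction, so that $\mu(\phi,N,\mathfrak a)=1$, as noted above --- so the whole content is the \emph{uniformity} of $N$ and $\mathfrak a$ once $L$ is fixed. The plan is to reduce to a place-by-place analysis followed by a global counting argument. First I would invoke the Drinfeld Kodaira--N\'{e}ron theorem (Theorem~\ref{th:drinfeldKN}) to describe the component module $\fj{\phi}(L_v)/\phi^0(L_v)$ at each finite place $v$ in terms of the reduction data of $\phi$ at $v$, using the Tate-type analytic uniformisation of $\phi$ over $\CC_v$ --- much as the component group of a Tate curve is read off from the valuation of the Tate parameter --- and extract from this an estimate bounding the annihilator (equivalently, the ``size'') of $\fj{\phi}(L_v)/\phi^0(L_v)$ above in terms of the local contribution $\lambda_v$ to $h(j_\phi)$ together with the ramification and residue degree of $v$ over $K$. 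Since $L/K$ is fixed those local invariants of $v$ are bounded, so one obtains for each ideal $\mathfrak a\subseteq A$ a threshold $C(\mathfrak a,L)$ such that $\lambda_v\le C(\mathfrak a,L)$ forces $\fj{\phi}(L_v)/\phi^0(L_v)$ to be annihilated by $\mathfrak a$, and $C(\mathfrak a,L)$ can be made arbitrarily large by choosing $\mathfrak a$ suitably.

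Granting such a dichotomy, the ``bad'' places of the pair $(\phi,\mathfrak a)$ --- those $v$ at which the component module is not killed by $\mathfrak a$ --- lie in $\{v:\lambda_v>C(\mathfrak a,L)\}$. Writing $h_{\mathrm{fin}}=\sum_v\lambda_v$ for the finite part of $h(j_\phi)$, one then has $\mu(\phi,N,\mathfrak a)\ge 1-h_{\mathrm{fin}}^{-1}\Sigma$, where $\Sigma$ is the sum of $\lambda_v$ over the bad places apart from the $N$ of largest $\lambda_v$; so it is enough to show that these remaining bad places carry at most a $(q-1)/q$ fraction of $h_{\mathrm{fin}}$, with the bound on $N$ depending only on $L$. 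This is the crux, and it requires a genuine concentration principle: either the bad places number at most $N$ and one is done directly, or the largest few already account for a definite proportion of $h_{\mathrm{fin}}$, or the $\lambda_v$ over the bad places decay fast enough that the tail is negligible.

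The main obstacle is precisely this last step, and it is not a formality: one can build elliptic curves with large height spread nearly evenly over many primes of multiplicative reduction whose component groups are unbounded, so the naive analogue of Conjecture~\ref{question} over number fields is \emph{false} --- the order of the component group at a prime $p$ of multiplicative reduction is exactly $v_p(\Delta_{\min})$, the local contribution to $h(j)$. The hope is that the Drinfeld setting is more rigid: $\fj{\phi}(L_v)/\phi^0(L_v)$ is a module over $A$ rather than a bare finite group, and the weighting of $j_\phi$ in its weighted projective space should constrain the joint distribution of the $\lambda_v$ and the annihilators more tightly than in the classical case. I would therefore first settle the rank-$1$ and rank-$2$ cases explicitly --- where the uniformising lattice, and hence $\fj{\phi}(L_v)/\phi^0(L_v)$, can be computed by hand --- read off from those the exact dependence of the annihilator on $\lambda_v$, and only then attempt the general counting argument. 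Failing a complete proof, Theorem~\ref{th:szpiroresult} already reduces Poonen's Conjecture~\ref{conj:poonenconj} to exactly this local-to-global combinatorial question.
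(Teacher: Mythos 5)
You have correctly recognised that the statement is a conjecture: the paper contains no proof of it, only partial evidence, so there is no ``paper proof'' for your strategy to be measured against, and your proposal is candid about being a programme rather than an argument. The local half of your programme is, however, already carried out in the paper and is not where the difficulty lies: Theorem~\ref{th:drinfeldKN} and, more quantitatively, Lemma~\ref{lem:vjbound} bound $\#\bigl(\fj{\phi}(L_v)/\phi^0(L_v)\bigr)$ in terms of $v(j_\phi)$ and the rank drop $s$, so for each threshold $x$ there is a single ideal $\mathfrak{a}(x)$ annihilating the component module at every place with $0<v(j_{\phi})\leq x$; this is exactly the dichotomy you ask for, and it is how the proof of Theorem~\ref{th:simplefams} begins. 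The genuine content is the global concentration step you isolate, and there your proposal stops at the point where the paper's partial result actually does its work: for simple families the paper does not exploit any extra ``rigidity'' of the $A$-module structure at individual places, but rather a geometric mechanism --- specialisation $\beta\mapsto\phi_\beta$, the map $J_\phi$, and a Riemann--Hurwitz count comparing $\deg_{\mathrm{s}}(\beta)$ with the number (weighted by degree) of places of bad reduction --- i.e.\ a function-field Szpiro/ABC-type inequality, which is precisely what forces most of the height to sit at places with $v(j_{\phi_\beta})$ bounded. So your plan is compatible with the paper but is missing the one idea the paper actually has.

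One concrete error to flag: your assertion that the naive analogue over number fields is \emph{false} because one can build elliptic curves whose height is spread over many primes of multiplicative reduction with unbounded component groups. To defeat every $N$ and every $\mathfrak{a}=(n!)$ you would need curves in which primes with $v_p(\Delta_{\min})>n$ (component group of order not dividing $n!$) carry essentially all of the $j$-height at more than $N$ primes; any such curve has Szpiro ratio exceeding $n$, so the existence of such families for all $n$ is not known and would contradict Szpiro's Conjecture. Indeed Theorem~\ref{th:elliptic} shows the opposite: under Szpiro with bound $\sigma$, the number-field analogue holds with $N=0$ and constant $1/\sigma-\epsilon$. The correct reading of the analogy is that Conjecture~\ref{question} is a Drinfeld-module avatar of (a weak form of) Szpiro's Conjecture, not a statement whose classical counterpart fails; as written, your motivating remark undercuts rather than supports the strategy.
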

Although we will explain below how Conjecture~\ref{question} is motivated by Szpiro's Conjecture for elliptic curves, perhaps the best evidence for Conjecture~\ref{question} is that we can prove it for certain families of Drinfeld modules.
Consider some curve $C/L$, and a Drinfeld module $\phi/L(C)$.  If the coefficients of $\phi$ are regular at $\beta\in C(L)$, we may specialize to obtain a Drinfeld module $\phi_\beta/L$.  If $C$ is defined over $\FF_q\subseteq L$ and the coefficients of $\phi_a(x)-ax\in L(C)[x^q]$ happen to land in $\FF_q(C)$, then we will call this family a \emph{simple family}.  As an example of a simple family of Drinfeld $\FF_q[T]$-modules over $\PP^1$, consider the family over any extension $L/K$ defined by
\[\phi_{\beta, T}(x)=Tx+x^q+(\beta^3-\beta)x^{q^r}\]
for $\beta\in\PP^1(L)$.
Finally, we let $m_2=5$, $m_3=4$, and $m_q=3$ for $q\geq 4$.
\begin{theorem}\label{th:simplefams}
Suppose $A=\FF_q[T]$, let $L/K$ be a rational extension which is at most quadratic, let $\phi/L(\PP^1)$ be a simple family of Drinfeld modules, such that the generic fibre has at least $m_q$ geometric places of genuinely bad (i.e., not potentially good) reduction.  Then the family of fibres $\phi_\beta$ with $\beta\in \PP^1(L)$ satisfies Conjecture~\ref{question}, for $N=0$ and some ideal $\mathfrak{a}\subseteq A$ which is independent of $\beta$.  In particular, $\# \phi_\beta^\mathrm{Tors}(L)$ is bounded uniformly for $\beta\in \PP^1(L)$.  
\end{theorem}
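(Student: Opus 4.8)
The plan is to deduce Theorem~\ref{th:simplefams} from Theorem~\ref{th:szpiroresult}: it suffices to produce a \emph{proper} ideal $\mathfrak a\subseteq A$, depending only on the family and on $r$, for which $\mu(\phi_\beta,0,\mathfrak a)\geq 1/q$ holds for every $\beta\in\PP^1(L)$ at which the family is regular. Since $L/K$ is rational and at most quadratic, fix a smooth model $C_L$ with $g_{C_L}=0$; a non-constant $\beta$ is then a morphism $\beta\colon C_L\to\PP^1$ of degree $h(\beta)$, while the finitely many constant $\beta$ give $h(j_{\phi_\beta})=0$ and are covered trivially. Because the family is simple, the coordinates of $j_{\phi_\beta}$ are the fixed rational functions $a_1,\dots,a_r\in\FF_q(t)$ of the family evaluated at $\beta$; inspecting the local heights, the finite part of $h(j_{\phi_\beta})$ is therefore supported exactly at the places $v$ at which $\beta$ reduces into the (finite) singular locus $\Sigma\subseteq\PP^1$ of the family, and at such a $v$ the local height $\lambda_v(j_{\phi_\beta})$ is a fixed positive multiple of the local intersection number $m_v$ of $\beta$ with $\Sigma$.

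First I would classify the component module $\fj{\phi_\beta}(L_v)/\phi^0(L_v)$ at such a $v$ using Theorem~\ref{th:drinfeldKN}, splitting $\Sigma=\Sigma_{\mathrm{pg}}\sqcup\Sigma_{\mathrm{gb}}$ into points of potentially good and of genuinely bad reduction. If $\beta$ reduces at $v$ to $\gamma\in\Sigma_{\mathrm{pg}}$ then — and this is where simplicity is used — a twist by a scalar brings $\phi_\beta$ to good reduction over an extension of $L_v$ of degree dividing a fixed integer built from factors $q^i-1$, hence \emph{tamely} ramified; Theorem~\ref{th:drinfeldKN} then confines the component module at $v$ to a finite set depending only on $r$. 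If instead $\gamma\in\Sigma_{\mathrm{gb}}$, Theorem~\ref{th:drinfeldKN} presents the component module as an $A$-torsion module whose annihilator has degree comparable to the depth of bad reduction $v(\Disc_{\phi_\beta})=m_v\cdot(\text{a constant of }\gamma)$; in particular it is annihilated by a fixed ideal $\mathfrak a_0=\mathfrak a_0(r,\text{family})$ as long as $m_v$ does not exceed a threshold $\tau$, which is of size roughly $q^{r}$.

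I would then take $\mathfrak a=\mathfrak a_0$ and estimate $\mu$. The only \emph{uncontrolled} places are those at which $\beta$ meets a point of $\Sigma_{\mathrm{gb}}$ with multiplicity exceeding $\tau$; since $\sum_v(m_v-1)\deg v$ over all the ramification of $\beta$ is at most $\deg\mathrm{Ram}_\beta=2g_{C_L}-2+2h(\beta)=2h(\beta)-2$ by Riemann--Hurwitz, the total contribution of such places to $h_f(j_{\phi_\beta})$ is at most a bounded constant times $\tfrac{\tau}{\tau-1}\cdot\tfrac{2h(\beta)-2}{q^r-1}$. On the other hand each of the $\geq m_q$ points of $\Sigma_{\mathrm{gb}}$ already contributes at least $\tfrac{h(\beta)-O(1)}{q^r-1}$ to the finite height (a transversal fibre over it accounts for $h(\beta)-O(1)$ of the intersection), so $h_f(j_{\phi_\beta})\geq\tfrac{m_q h(\beta)-O(1)}{q^r-1}$. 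Comparing the two bounds, the uncontrolled proportion is $<1-1/q$, hence $\mu(\phi_\beta,0,\mathfrak a_0)\geq 1/q$, once $m_q$ exceeds the stated values; the decrease $m_2=5>m_3=4>m_q=3$ for $q\geq 4$ reflects that the threshold $\tau\asymp q^r$, and with it the wasteful factor $\tfrac{\tau}{\tau-1}\in\{2,\tfrac{6}{5},\dots\}$, improves as $q$ grows. Theorem~\ref{th:szpiroresult} now bounds $\#\phi_\beta^{\mathrm{Tors}}(L)$ by a constant depending only on $\mathfrak a_0$ and $r$, uniformly in $\beta$.

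The main obstacle is the classification in the second step: Theorem~\ref{th:drinfeldKN} describes the component module abstractly, and turning it into the two concrete inputs above — that a specialized simple family has component module annihilated by a fixed ideal at genuinely bad places of bounded depth, and that the tame twist in the potentially good case cannot produce component modules outside a fixed finite family — is where the real content lies, especially in pinning down the threshold $\tau$ precisely enough that the Riemann--Hurwitz comparison delivers exactly $1/q$ and hence the sharp constants $m_q$. The remaining bookkeeping with heights and ramification divisors is routine once this input is in hand.
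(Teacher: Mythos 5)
Your overall strategy is the same as the paper's: fix a threshold on the depth $v(j_{\phi_\beta})$, use the bound on the component module in terms of $v(j_\phi)$ (Lemma~\ref{lem:vjbound}) to get a single ideal $\mathfrak{a}$ annihilating all shallow places, and use Riemann--Hurwitz to show that the deep places cannot carry more than $1-1/q$ of the finite height. (Your concern about places where $\beta$ meets the potentially good locus is unnecessary: at such a finite place $j_{\phi_\beta,v}=0$ and the component module is trivial, so these places never enter $\mu$.) The genuine gap is the Riemann--Hurwitz step applied directly to $\beta\colon\PP^1\to\PP^1$: an arbitrary $\beta\in\PP^1(L)$ may be inseparable, of unbounded inseparable degree. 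Then the inequality $\sum_v(m_v-1)\deg v\le 2h(\beta)-2$ is simply false (for $\beta=t^{p}$ each of the $\ge 3$ bad points has a single preimage of multiplicity $p$, and $3(p-1)>2p-2$), and, worse, every intersection multiplicity is divisible by $\deg_\mathrm{i}(\beta)$, so as soon as $\deg_\mathrm{i}(\beta)>\tau$ \emph{every} bad place is ``uncontrolled'' and your fixed $\mathfrak{a}_0$ certifies nothing. This is precisely why the paper introduces $F=\FF_{q^e}(J_\phi\circ\beta)$, its separable closure $E$ in $L$, and Lemma~\ref{lem:insep}: Riemann--Hurwitz is applied only to the separable extension $E/E_\beta$, and the inseparable degree $[L:E]$ cancels between the lower bound \eqref{eq:jlower} and the height, which is what makes the final inequality independent of $\deg_\mathrm{i}(\beta)$. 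As written, your argument yields at most the analogue of Theorem~\ref{th:simplefams2}, with a bound depending on the inseparable degree of $\beta$, not Theorem~\ref{th:simplefams}.

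The quantitative comparison is also not correct as stated. The per-point lower bound ``each of the $\ge m_q$ bad points contributes at least $(h(\beta)-O(1))/(q^r-1)$ to the finite height'' fails because an entire fibre over a bad point may lie over the infinite places of $L$ (take $\beta$ totally ramified over a bad $\gamma$ at some $v\in M_L^\infty$), contributing nothing to the finite part; with $m_q=3$ and up to two infinite places this wipes out your margin. Moreover the pole orders of $J_\phi$ at the various bad points need not be equal, so the constant in your numerator and the one implicit in your denominator do not match, and the ratio need not fall below $1-1/q$. The paper's proof avoids both problems by running the comparison at the level of \emph{degrees of places} rather than of height mass: Riemann--Hurwitz gives the lower bound $(\deg_\mathrm{s}(J_\phi)-2)\deg_\mathrm{s}(\beta)\le 2g_L-2+\sum_{v\in S_L}\deg(v)$, while the assumption $\mu<1/q$ together with \eqref{eq:jlower} gives the upper bound $\sum_{v\in S_L}\deg(v)\le\left(\tfrac{1}{q}+\tfrac{1}{x}\right)\deg_\mathrm{s}(\beta)\deg_\mathrm{s}(J_\phi)+[L:K]$; the hypotheses that $L$ is rational and $[L:K]\le 2$ enter only through $2g_L-2+[L:K]\le 0$, not through any per-point height estimate. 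The missing ideas, then, are the separable/inseparable splitting of $J_\phi\circ\beta$ and the degree-counting (rather than height-counting) form of this abc-type inequality; without them neither the sharp values of $m_q$ nor the uniformity over all $\beta\in\PP^1(L)$ is reached.
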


Theorem~\ref{th:szpiroresult} justifies some curiosity as to the structure of the component module $\fj{\phi}(L_v)/\phi^0(L_v)$. 
 If $F_\mathfrak{p}/\QQ_p$ is a finite extension, if $E/F_\mathfrak{p}$ is an elliptic curve with split multiplicative reduction, and if $E^0(F_\mathfrak{p})$ is the connected component of the identity, then 
\[E(F_\mathfrak{p})/E^0(F_\mathfrak{p})\cong \ZZ/N\ZZ,\] for $N=-v(j_E)$, by well-known work of Kodaira, N\'{e}ron, and Tate (see, e.g., \cite{ataec}).
If $\CC_p$ is a completion of the algebraic closure of $F_\mathfrak{p}$, we have \[E(\CC_p)/E^0(\CC_p)\cong \QQ/\ZZ.\]  Replacing $\ZZ$ with $A$, $\QQ$ with $K$, and $E$ with $\phi$, we see a strikingly similar structure to $\fj{\phi}(L_v)/\phi^0(L_v)$.
\begin{theorem}\label{th:drinfeldKN}
Suppose $A=\FF_q[T]$, let $L_v/K_v$ be a finite extension, let $\CC_v$ be the completion of the algebraic closure of $L_v$, and let $\phi/L_v$ be a Drinfeld module of rank $r$ with potentially stable reduction of rank $r-s$.  Then there exist $a_1, ..., a_s\in A$, of degree bounded in terms of $s$ and $-v(j_\phi)$, such that
\[\fj{\phi}(L_v)/\phi^0(L_v)\cong \bigoplus_{i=1}^s A/a_iA,\]
and
\[\fj{\phi}(\CC_v)/\phi^0(\CC_v)\cong (K/A)^{s}.\]
In particular, both modules are trivial if $\phi$ has potentially good reduction.
\end{theorem}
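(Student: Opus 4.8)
The plan is to run the Drinfeld analogue of the Tate parametrisation of a curve with multiplicative reduction, and then read off the component module from a short exact sequence of $A$-modules. Since $\fj{\phi}(\CC_v)$ and $\phi^0(\CC_v)$ depend only on $\phi$ after base change to $\CC_v$, for the two statements over $\CC_v$ I may replace $L_v$ by a finite extension and assume $\phi$ itself has stable reduction of rank $r'=r-s$. The rigid-analytic uniformization then furnishes a Drinfeld $A$-module $\psi$ of rank $r'$ with good reduction over the ring of integers, a discrete $A$-lattice $\Lambda\subset\psi(\CC_v)$ which is free of rank $s$ over the PID $A$ and stable under $G=\Gal(\CC_v/L_v)$, and an exact sequence of $G$-modules
\[
0\longrightarrow\Lambda\longrightarrow\psi(\CC_v)\xrightarrow{\ e_\Lambda\ }\phi(\CC_v)\longrightarrow0 ,
\]
in which $e_\Lambda$ is the $\FF_q$-linear entire function with kernel $\Lambda$, satisfying $e_\Lambda\circ\psi_a=\phi_a\circ e_\Lambda$ for all $a\in A$.

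The heart of the argument is to identify $\fj{\phi}(\CC_v)$ and $\phi^0(\CC_v)$ through $e_\Lambda$. First I would note that good reduction of $\psi$ forces $\fj{\psi}(\CC_v)=\mathcal{O}_{\CC_v}$, the closed unit ball: in the good model the leading coefficient of $\psi_T$ is a unit, so $|\psi_T(z)|>|z|$ whenever $|z|>1$, while $\mathcal{O}_{\CC_v}$ is a bounded $\psi$-submodule. Discreteness of $\Lambda$ forces $|\lambda|>1$ for every nonzero $\lambda\in\Lambda$, so $\Lambda\cap\mathcal{O}_{\CC_v}=0$ and $e_\Lambda$ is norm-preserving, hence bijective, on $\mathcal{O}_{\CC_v}$; thus $e_\Lambda(\mathcal{O}_{\CC_v})=\mathcal{O}_{\CC_v}$. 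Next, in the stable model the reduction of $\phi_T$ has nonzero coefficient in $q$-degree $r'$, so the generic point of every sphere $\{|z|=t\}$ with $t>1$ escapes under $\phi_T$; consequently $\fj{\phi}(\CC_v)\cap\{|z|>1\}$ consists only of points lying in small balls about torsion points, no sphere of radius $>1$ lies entirely in $\fj{\phi}(\CC_v)$, and the component of $0$ in $\overline{\fj{\phi}(\CC_v)}\subseteq\Aberk{\CC_v}$ is exactly the closed unit Berkovich disc, giving $\phi^0(\CC_v)=\mathcal{O}_{\CC_v}$. Finally, every torsion point has finite $\phi_T$-orbit, so $\phi^{\mathrm{Tors}}(\CC_v)=e_\Lambda(K\Lambda)\subseteq\fj{\phi}(\CC_v)$, where $K\Lambda=\bigcup_{0\ne a\in A}\psi_a^{-1}(\Lambda)\cong\Lambda\otimes_A K$; combined with the two previous points, and with the observation that translation is a homeomorphism of $\Aberk{\CC_v}$ so that $\pi_0(\overline{\fj{\phi}(\CC_v)})\cong\fj{\phi}(\CC_v)/\phi^0(\CC_v)$, one obtains (the fattening of $K\Lambda$ and passage to the closure not affecting $\pi_0$)
\[
\fj{\phi}(\CC_v)/\phi^0(\CC_v)\ \cong\ \big(\mathcal{O}_{\CC_v}+K\Lambda\big)\big/\big(\mathcal{O}_{\CC_v}+\Lambda\big)\ \cong\ K\Lambda\big/\big(K\Lambda\cap(\mathcal{O}_{\CC_v}+\Lambda)\big) .
\]
Since a $\psi$-division point of $\Lambda$ lying in $\mathcal{O}_{\CC_v}$ maps into $\Lambda\cap\mathcal{O}_{\CC_v}=0$ and is therefore $\psi$-torsion, the denominator is $\psi^{\mathrm{Tors}}(\CC_v)+\Lambda$; dividing out $\psi^{\mathrm{Tors}}(\CC_v)$ identifies $K\Lambda/\psi^{\mathrm{Tors}}(\CC_v)$ with $\Lambda\otimes_A K$ and the image of $\Lambda$ with $\Lambda\otimes1$, so the component module is $\Lambda\otimes_A(K/A)\cong(K/A)^s$, trivial precisely when $s=0$.

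For the statement over $L_v$ I would pass to $G$-invariants. This realises $\fj{\phi}(L_v)/\phi^0(L_v)$ as a submodule of $\big((K/A)^s\big)^G$, in particular a torsion $A$-module; and it is finitely generated because $\fj{\phi}(L_v)$ is a closed bounded, hence compact, subset of the local field $L_v$, while $\phi^0(L_v)$ is open in it ($e_\Lambda$ being open near $0$), so the quotient is compact and discrete, hence finite. A finite submodule of $(K/A)^s$ over a PID has at most $s$ invariant factors, each localisation $(K/A)_{\mathfrak p}$ being co-cyclic, which gives $\fj{\phi}(L_v)/\phi^0(L_v)\cong\bigoplus_{i=1}^s A/a_iA$. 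To bound the $\deg a_i$, one observes that they record how deep an $L_v$-rational $\psi$-division point of $\Lambda$ can be: if $\psi_a(x)=\lambda\in\Lambda$ with $x\in L_v$, then the Newton polygon of $\psi_a(X)-\lambda$ is a single segment, so $|x|$ equals $|\lambda|$ raised to a negative power of $q$ growing with $\deg a$, and lying in the discrete value group $|L_v^\times|$ bounds $\deg a$ in terms of $v(\lambda)$; since the successive minima of $\Lambda$ — equivalently the relevant valuations of a set of generators — are controlled by $-v(j_\phi)$, this bounds $\sum_i\deg a_i=\log_q\#\big(\fj{\phi}(L_v)/\phi^0(L_v)\big)$, and hence each $\deg a_i$, in terms of $s$ and $-v(j_\phi)$. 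When $-v(j_\phi)\le 0$ the bound collapses; and indeed, if $\phi$ has potentially good reduction then $\fj{\phi}(L_v)=\fj{\phi}(\CC_v)^G$ is a twisted form of the unit ball whose Berkovich closure is connected, so $\phi^0(L_v)=\fj{\phi}(L_v)$ and both modules vanish.

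I expect the genuine obstacles to be two. The first is making precise the topological picture in the paragraph above: that, pulled back through $e_\Lambda$, the filled Julia set is exactly $\mathcal{O}_{\CC_v}+K\Lambda$ up to a fattening and a closure that create no connected components beyond those counted by $K\Lambda/(\psi^{\mathrm{Tors}}(\CC_v)+\Lambda)$, which requires controlling $|e_\Lambda|$ on the region far (in the quotient metric) from $\Lambda$ and checking that orbits escaping in the quotient force $|e_\Lambda(\psi_{T^n}x)|\to\infty$. The second is the quantitative Step relating the covolume and successive minima of the Tate lattice $\Lambda$, together with the ramification of $L_v$, to $-v(j_\phi)$; once these two points are in hand, the algebra in the remaining steps is formal.
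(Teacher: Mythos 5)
Your route is, in outline, the same as the paper's: Drinfeld--Tate uniformization $(\psi,\Lambda)$, the identification $\phi^0(\CC_v)=e_\Lambda(\Lambda+\Ocal_v)=\Ocal_v$, the computation of the division module $H_\Lambda/(\Lambda+\psi^{\mathrm{Tors}})\cong (K/A)^s$ (your tensor-product formulation is a clean, basis-free version of the paper's explicit map built from a successive-minima basis for $\Lambda$), and the $L_v$-statement obtained by embedding the finite module $\fj{\phi}(L_v)/\phi^0(L_v)$ into $(K/A)^s$. The genuine gap is exactly where you flag your ``first obstacle'', and it is the crux: the inclusion $\fj{\phi}(\CC_v)\subseteq\phi^{\mathrm{Tors}}(\CC_v)+\Ocal_v$ (equivalently $e_\Lambda^{-1}\bigl(\fj{\phi}(\CC_v)\bigr)\subseteq H_\Lambda+\Ocal_v$, which given your set-up amounts to the torsionness of $\fj{\phi}(\CC_v)/\phi^0(\CC_v)$). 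Your ``consequently'' does not follow from the escape of the Gauss points of radius $t>1$: that escape shows no sphere of radius $>1$ lies in the filled Julia set, which pins down $\phi^0_{\mathrm{Berk}}=\mathcal{D}(0,1)$, but it says nothing about which classical points of absolute value $>1$ have bounded orbit. Moreover the remedy you sketch --- controlling $|e_\Lambda|$ in terms of the distance to $\Lambda$ and checking that escape in the quotient forces escape of the $\phi$-orbit --- only yields the equivalence ``$e_\Lambda(z)$ has bounded $\phi$-orbit iff the $\psi$-orbit of $z$ stays at bounded distance from $\Lambda$''; it does not show that such a $z$ must eventually be mapped into $\Lambda+\Ocal_v$, i.e.\ is a division point of $\Lambda$ modulo $\Ocal_v$. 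The paper supplies this missing input by a different mechanism: it first proves finiteness of $\fj{\phi}(L')/\phi^0(L')$ for every finite extension $L'/L_v$ (using that $\fj{\phi}(L')$ lies in a disk of finite radius while $\phi^0(L')$ contains a disk of positive radius, over a discretely valued field), then uses density of $\overline{L_v}$ in $\CC_v$ and a direct limit to conclude that $\fj{\phi}(\CC_v)/\phi^0(\CC_v)$ is torsion, and only then runs the division-point argument through $e_\Lambda$. Some such argument is required; without it your identification of $\fj{\phi}(\CC_v)$ with $\Ocal_v+e_\Lambda(H_\Lambda)$ is unproved.

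The quantitative part of the statement --- $\deg a_i$ bounded in terms of $s$ and $-v(j_\phi)$ --- is also deferred in your proposal (your ``second obstacle''): you would need to bound the successive minima of $\Lambda$, together with the ramification of $L_v$, in terms of $-v(j_\phi)$, and this is not done. The paper avoids the lattice entirely here: Lemma~\ref{lem:vjbound} bounds $\#\bigl(\fj{\phi}(L_v)/\phi^0(L_v)\bigr)$ directly in terms of $v(j_\phi)$ and $s$ by a pigeonhole argument on $T$-generic cosets and Newton polygons. Two smaller points are easily repaired: $\Lambda\cap\Ocal_v=\{0\}$ is not a consequence of discreteness alone (discreteness gives only finitely many lattice points in the unit ball; one then uses that this intersection is a finite submodule of the torsion-free module $\Lambda$), and surjectivity of $e_\Lambda$ on $\Ocal_v$ needs a word (e.g.\ surjectivity of $e_\Lambda$ on $\CC_v$ plus the fact that points outside $\Lambda+\Ocal_v$ have image outside $\Ocal_v$, together with additivity of $e_\Lambda$). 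The two gaps above, however, are the substance of the paper's proof.
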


Extending the analogy with elliptic curves further, we may define a quantity $0\leq \mu(E, N, \mathfrak{a})\leq 1$, for any $N\geq 0$ and ideal $\mathfrak{a}\subseteq \ZZ$, which represents the proportion of the height of $j_E$ coming from  places where the component group $E(F_\mathfrak{p})/E^0(F_\mathfrak{p})$ is annihilated by $\mathfrak{a}$, ignoring the contributions of archimedean places and at most $N$ other places (which will be made precise in Section~\ref{sec:elliptic}).
We recall a weak form of Szpiro's Conjecture for semistable elliptic curves.
\begin{conjecture}[Szpiro \cite{szpiro}]
There exists a constant $\sigma$, depending only on $F$, such that if $E/F$ is a semistable elliptic curve with minimal discriminant $\Delta_E$ and conduction $f_E$, then
\[\frac{\log|\mathrm{Norm}_{K/\QQ}\Delta_E|}{\log|\mathrm{Norm}_{K/\QQ}f_E|}\leq \sigma.\]
\end{conjecture}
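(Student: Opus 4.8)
This is (a weak form of) Szpiro's conjecture, and I should say at the outset that it is among the central open problems of the subject: there is no unconditional proof over $\QQ$, let alone over a general number field $F$, and I do not expect to produce one. What I can lay out is the standard conditional route --- deriving the inequality from the $abc$ conjecture over $F$ --- together with the observation that in the function-field setting which actually animates this paper the very same argument becomes a theorem.

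Write $\mathrm{N}$ for the absolute norm from $F$. For a semistable $E/F$, fix a global minimal Weierstrass model with covariants $c_4,c_6\in\Ocal_F$, so that $1728\,\Delta_E=c_4^3-c_6^2$. The first step is bookkeeping on conductor exponents: semistability forces the reduction at each bad place $\mathfrak{p}$ to be multiplicative, so $v_\mathfrak{p}(f_E)=1$ there and $v_\mathfrak{p}(f_E)=0$ elsewhere, whence $f_E$ equals the radical $\mathrm{rad}(\Delta_E)$ (the product of the distinct primes dividing $\Delta_E$) and $|\mathrm{N}\,f_E|=\mathrm{N}(\mathrm{rad}(\Delta_E))$; the case of everywhere-good reduction, where both norms equal $1$ and the ratio is vacuous, is excluded by convention. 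I would track the finitely many places above $2$ and $3$ carefully throughout, since that is the only point at which a genuine dependence on $F$ --- and hence the ``$\sigma$ depends on $F$'' of the statement --- enters.

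The second step feeds the syzygy $c_4^3+(-c_6^2)=1728\,\Delta_E$ into the $abc$ conjecture over $F$. After clearing the gcd (or using a radical form of $abc$ that does not demand coprimality) one gets $\max\bigl(|\mathrm{N}\,c_4|^3,\,|\mathrm{N}\,c_6|^2,\,|\mathrm{N}\,\Delta_E|\bigr)\ll_{F,\epsilon}\mathrm{N}(\mathrm{rad}(c_4c_6\Delta_E))^{1+\epsilon}$. A short bootstrap then purges $c_4,c_6$ from the radical: the same inequality gives $|\mathrm{N}\,c_4|\ll\mathrm{N}(\mathrm{rad}(c_4c_6\Delta_E))^{(1+\epsilon)/3}$ and $|\mathrm{N}\,c_6|\ll\mathrm{N}(\mathrm{rad}(c_4c_6\Delta_E))^{(1+\epsilon)/2}$, whence $\mathrm{N}(\mathrm{rad}(c_4c_6\Delta_E))\ll_{F,\epsilon}\mathrm{N}(\mathrm{rad}(\Delta_E))^{6+\epsilon}$, and combining this with the $\Delta_E$-term of the first inequality yields $|\mathrm{N}\,\Delta_E|\ll_{F,\epsilon}|\mathrm{N}\,f_E|^{6+\epsilon}$. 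Taking logarithms and using $\log|\mathrm{N}\,f_E|\ge\log 2$ bounds the ratio in the statement by $\sigma=6+\epsilon$, hence by \emph{some} $\sigma$. One further elementary point --- also needed for the $\mu$-type estimates elsewhere in the paper --- is that for a minimal model the archimedean contribution to the height of $j_E=c_4^3/\Delta_E$ is itself $O(\log|\mathrm{N}\,\Delta_E|)$, so the ``finite'' and ``total'' heights of $j_E$ stay comparable.

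The main obstacle is, of course, that $abc$ over $F$ is itself open, so the above is a reduction and not a proof; I know of no approach to the number-field statement that sidesteps it. What is worth recording here is the contrast with the function-field analogue that motivates Conjecture~\ref{question}: replacing $F/\QQ$ by the function field of a curve over $\FF_q$, the role of $abc$ passes to the Mason--Stothers theorem, which is unconditional, and the same two-step argument then produces an \emph{effective} Szpiro inequality $\deg\Delta_E\le 6\,\deg f_E+O(1)$ for non-isotrivial $E$, the $O(1)$ depending only on the base curve. It is this unconditional function-field bound --- rather than the number-field conjecture above --- that the present paper mirrors through the component modules $\fj{\phi}(L_v)/\phi^0(L_v)$, in order to render Theorem~\ref{th:szpiroresult} and Conjecture~\ref{question} plausible.
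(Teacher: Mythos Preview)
The statement is recorded in the paper as a \emph{conjecture} (Szpiro's Conjecture), not as a theorem, and the paper makes no attempt to prove it; it is invoked only as motivation for Conjecture~\ref{question} and as input to Theorem~\ref{th:elliptic}. You correctly identify that no unconditional proof is available and instead sketch the standard conditional derivation from the $abc$ conjecture over $F$, together with the unconditional function-field analogue via Mason--Stothers. That sketch is accurate and appropriate as context, but there is simply no proof in the paper to compare against: your write-up goes beyond what the paper does by supplying the well-known reduction, whereas the paper merely states the conjecture and moves on.
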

The ratio on the left is commonly known as the Szpiro ratio $\sigma(E/F)$, and at least for $F=\QQ$ it is conjectured that we may take any $\sigma>6$, if we allow finitely many exceptional $E/\QQ$.  The following result is essentially due to Hindry and Silverman~\cite{hindry-silv}, although we consider only semistable elliptic curves for simplicity.
\begin{theorem}\label{th:elliptic}
Let $E/F$ be semistable, let $n\geq 1$, and let $\mathfrak{a}=(n!)\subseteq\ZZ$.  Then \[\mu(E, 0, \mathfrak{a})\geq \frac{1-\sigma(E/F)/n}{\sigma(E/F)(1-1/n)}.\]  In particular, if Szpiro's Conjecture holds for the field $K$, and $\sigma(E, N)\leq \sigma$ for all semistable $E/F$, then for any $\epsilon>0$ there is an ideal $\mathfrak{a}\subseteq \ZZ$ such that \[\mu(E, 0,  \mathfrak{a})\geq 1/\sigma-\epsilon\] for all semistable $E/F$.
\end{theorem}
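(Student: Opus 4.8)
The plan is to unwind the definition of $\mu(E,0,\mathfrak{a})$ from Section~\ref{sec:elliptic} into a closed form, after which the whole statement reduces to a one-line averaging estimate. Since $E/F$ is semistable, at every prime $\mathfrak{p}$ in the (finite) set $S$ of primes of bad reduction the reduction is multiplicative, so by the theory of Kodaira, N\'{e}ron and Tate the component group $E(F_\mathfrak{p})/E^0(F_\mathfrak{p})$ is cyclic of order $n_\mathfrak{p}:=v_\mathfrak{p}(\Delta_E)=-v_\mathfrak{p}(j_E)>0$ when the reduction is split, and of order dividing $2$ when it is non-split; at primes of good reduction $v_\mathfrak{p}(j_E)\geq 0$. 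Thus $j_E$ has a pole of order exactly $n_\mathfrak{p}$ at each $\mathfrak{p}\in S$ and no poles elsewhere, so the finite part of $h(j_E)$ is a fixed positive multiple of $D:=\sum_{\mathfrak{p}\in S}n_\mathfrak{p}\log\#k_\mathfrak{p}=\log|\mathrm{Norm}_{F/\QQ}\Delta_E|$. Writing $S_{\mathrm{bad}}\subseteq S$ for the set of primes at which the component group is \emph{not} annihilated by $\mathfrak{a}=(n!)$, and $B:=\sum_{\mathfrak{p}\in S_{\mathrm{bad}}}n_\mathfrak{p}\log\#k_\mathfrak{p}$, one finds (the good-reduction primes contributing nothing to the finite height) that $\mu(E,0,\mathfrak{a})=1-B/D$.

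The crux is the observation that $\mathfrak{p}\in S_{\mathrm{bad}}$ forces $n_\mathfrak{p}>n$: a cyclic group of order at most $n$ is killed by $n!$, and so is a group of order dividing $2$ once $n\geq 2$ (for $n=1$ the asserted bound is vacuous, as then $1-1/n=0$). Hence $\sum_{\mathfrak{p}\in S_{\mathrm{bad}}}\log\#k_\mathfrak{p}\leq B/n$, while $\sum_{\mathfrak{p}\in S\setminus S_{\mathrm{bad}}}\log\#k_\mathfrak{p}\leq D-B$ because each $n_\mathfrak{p}\geq 1$. Since $E$ is semistable the conductor is the radical of $\Delta_E$, so $\log|\mathrm{Norm}_{F/\QQ}f_E|=\sum_{\mathfrak{p}\in S}\log\#k_\mathfrak{p}$; adding the two bounds yields $\log|\mathrm{Norm}_{F/\QQ}f_E|\leq D-B(1-1/n)$. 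Setting $\sigma:=\sigma(E/F)=D/\log|\mathrm{Norm}_{F/\QQ}f_E|$ and rearranging gives $B\leq D(1-1/\sigma)/(1-1/n)$, whence
\[
\mu(E,0,\mathfrak{a})=1-\frac{B}{D}\;\geq\;1-\frac{1-1/\sigma}{1-1/n}\;=\;\frac{1/\sigma-1/n}{1-1/n}\;=\;\frac{1-\sigma(E/F)/n}{\sigma(E/F)(1-1/n)},
\]
which is the claimed inequality.

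For the last assertion I would note that $x\mapsto(1-x/n)/(x(1-1/n))$ is decreasing for $x>0$, so if Szpiro's Conjecture over $F$ furnishes a $\sigma$ with $\sigma(E/F)\leq\sigma$ for all semistable $E/F$, the inequality just proved upgrades to $\mu(E,0,\mathfrak{a})\geq(1-\sigma/n)/(\sigma(1-1/n))$ uniformly in $E$; since the right-hand side tends to $1/\sigma$ as $n\to\infty$, given $\epsilon>0$ one fixes $n$ with $(1-\sigma/n)/(\sigma(1-1/n))>1/\sigma-\epsilon$ and takes $\mathfrak{a}=(n!)$.

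The one genuinely non-routine step is the first: matching the informal description of $\mu$ to the closed form $1-B/D$. This requires the precise Kodaira--N\'{e}ron--Tate description of the multiplicative-reduction component groups (with the split/non-split distinction) together with the fact that, for semistable curves, the finite part of $h(j_E)$ is carried exactly by the minimal discriminant and the conductor is its radical; the degenerate case of everywhere good reduction, where $D=0$ and $\mu$ is taken to equal $1$, must also be disposed of. Everything past that point is the elementary averaging argument above.
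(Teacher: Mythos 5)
Your argument is correct and is essentially the paper's own proof (following Hindry--Silverman): both compare $\log|\mathrm{Norm}_{F/\QQ}\Delta_E|=\sigma(E/F)\log|\mathrm{Norm}_{F/\QQ}f_E|$ by splitting the bad places according to whether the component group is killed by $(n!)$, using that semistability gives conductor exponent $1$ and that a place in $S_E(\mathfrak{a})$ forces $-v(j_E)>n$; your reformulation $\mu=1-B/D$ is just an algebraic repackaging of the same chain of inequalities. The only differences are cosmetic: you are a bit more careful about non-split multiplicative reduction and the $n=1$ degeneracy, and you spell out the limiting argument for the final assertion, which the paper leaves implicit.
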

Considering the quantity $\mu(E, N, \mathfrak{a})$ for $N>0$ brings  us into the territory of Silverman's ``prime-depleted'' version of Szpiro's Conjecture~\cite{depszpiro}.

In Section~\ref{sec:prelims} we establish some notation, and recall the basics of Drinfeld modules.  In Section~\ref{sec:localfields} we consider Drinfeld modules over local fields, and establish some basic results on the structure of the filled Julia set, most notably the finiteness of the component module at finite places.  In Section~\ref{sec:global}, we consider the filled Julia set  in the context of global function fields, and we prove Theorems~\ref{th:szpiroresult} and~\ref{th:simplefams}.  Finally, in Section~\ref{sec:uniformization} we turn to a more detailed examination of the case where $A=\FF_q[T]$, and compare the structure of the component module to the analogous object in the case of elliptic curves.


\section{Notation and preliminaries}
\label{sec:prelims}

\subsection{Drinfeld modules}
Throughout, we suppose that $q$ is a power of a prime, and that $K$ is a function field in one variable over $\FF_q$, that is, that $K=\FF_q(X)$ for some algebraic curve $X/\FF_q$.  We fix a place $\infty\in X(\FF_q)$, and let $A\subseteq K$ denote the ring of regular functions at $\infty$.  If $a\in A$, then $\deg(a)$ will denote the order of the pole of $a$ at $\infty$, and we set $|a|_\infty=q^{\deg(a)}$; note that this agrees with our definitions below for the absolute value on $K$ corresponding to the point $\infty$.  By an \emph{$A$-field}, we mean a field $L$ with a homomorphism $i:A\to L$, and we will consider only the case in which $L$ has \emph{generic characteristic}, that is, where $i$ is an injection; the typical example is where $L/K$ is a finite extension, and $i$ is the inclusion map.

If $L$ is an $A$-field, then a \emph{Drinfeld} $A$-module over $L$ is a homomorphism 
\begin{gather*}\phi:A\to \mathrm{End}_L(\mathbb{G}_\mathrm{a})\\
a\mapsto \phi_a,
\end{gather*}
 with the property that, for all $a\in A$,
\[\phi_a(x)=ax+O(x^q)\in L[x],\]
but $\phi_a(x)\neq ax$ for at least some $a\in A$.  
Drinfeld~\cite{drinfeld} proved the existence of an integer $r\geq 1$ such that $\deg(\phi_a(x))=|a|_\infty^r$, for all $a\in A$, and this quantity $r$ will be known as the \emph{rank} of $\phi$.  

Two Drinfeld modules $\phi/L$ and $\psi/L$ are said to be \emph{isomorphic} over the extension $L'/L$ if and only if there exists an $\alpha\in L'$ such that $\phi_a(\alpha x)=\alpha\psi_a(x)$ for all $a\in A$, abbreviated $\phi\alpha=\alpha\psi$.  Suppose that we have fixed a ordered set of generators $\{T_1, ..., T_m\}$ for $A$ as an $\FF_q$-algebra.  Then we have, for each $i$,
\[\phi_{T_i}(x)=T_i+a_{i, 1}x^q+\cdots +a_{i, \deg(T_i)r}x^{q^{\deg(T_i)r}},\]
with $a_{i, j}\in L$.  By the \emph{$\vec{w}$-weighted projective space} $\PP^{\vec{w}}$, where $\vec{w}=(w_1, ..., w_{m+1})\in (\ZZ^+)^{m+1}$, we mean the quotient of $\AA^{m+1}\setminus\{(0, 0, ..., 0)\}$ under the $\mathbb{G}_\mathrm{m}(\overline{L})$ action
\[(x_1, x_2, ..., x_{m+1})\to(\alpha^{w_1}x_1, \alpha^{w_2}x_2, ..., \alpha^{w_{m+1}}x_{m+1}).\]
We warn the reader that, in general, points of $\PP^{\vec{w}}$ which are fixed by $\Gal(\overline{L}/L)$ do not necessarily have a representative with coordinates in $L$, unlike in the case of the usual projective space.

\begin{definition}
Fix a set of generators $T_1, ..., T_m$ for $A$ as an $\FF_q$-algebra, fix $r\geq 1$, and let
 $M_{A, r}$ denote the weighted projective space with coordinates $x_{i, j}$, for $1\leq i\leq m$ and $1\leq j\leq r\deg(T_i)$, such that $x_{i, j}$ is given weight $q^j-1$.  If $\phi/L$ is a Drinfeld $A$-module of rank $r$, then by the \emph{$j$-invariant} of $\phi/L$, we mean the point
\[j_\phi=[a_{1, 1}, a_{1, 2}, ..., a_{1, \deg(T_1)r}, a_{2, 1}, ..., a_{2, \deg(T_2)r}, ..., a_{m, 1}, a_{m, 2}, ..., a_{m, \deg(T_m)r}]\] in $M_{A, r}(L)$.
 \end{definition}

We note the following result, which is proven in \cite{jpaper}.
\begin{lemma}
Let $\phi/L$ and $\psi/L$ be two Drinfeld $A$-modules.  Then the following are equivalent:
\begin{enumerate}
\item $\phi$ and $\psi$ are $L^{\mathrm{sep}}$-isomorphic;
\item $\phi$ and $\psi$ are $\overline{L}$-isomorphic;
\item $j_\phi=j_\psi$.
\end{enumerate}
\end{lemma}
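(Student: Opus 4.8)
The plan is to prove $(i)\Rightarrow(ii)$, $(ii)\Leftrightarrow(iii)$, and $(ii)\Rightarrow(i)$, working throughout in the twisted polynomial ring $L\{\tau\}$ with $\tau\ell=\ell^q\tau$, in which a rank-$r$ Drinfeld $A$-module becomes an $\FF_q$-algebra homomorphism $\phi\colon A\to L\{\tau\}$ with $\phi_a=a+O(\tau)$, and an $L'$-isomorphism $\phi\alpha=\alpha\psi$ is conjugation $\psi_a=\alpha^{-1}\phi_a\alpha$ by some $\alpha\in(L')^{\times}$. Write $\phi_{T_i}=\sum_j a_{i,j}\tau^j$ and $\psi_{T_i}=\sum_j b_{i,j}\tau^j$, with $a_{i,0}=b_{i,0}=T_i$. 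The implication $(i)\Rightarrow(ii)$ is immediate, since $L^{\mathrm{sep}}\subseteq\overline{L}$.

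For $(ii)\Leftrightarrow(iii)$, the computational heart is the identity $\alpha^{-1}\phi_{T_i}\alpha=\sum_j\alpha^{q^j-1}a_{i,j}\tau^j$, which follows from $\tau^j\alpha=\alpha^{q^j}\tau^j$. Since $\psi$ and $a\mapsto\alpha^{-1}\phi_a\alpha$ are both $\FF_q$-algebra homomorphisms $A\to\overline{L}\{\tau\}$, they agree on all of $A$ if and only if they agree on the generators $T_1,\dots,T_m$, which by the identity just recorded is equivalent to $b_{i,j}=\alpha^{q^j-1}a_{i,j}$ for all $i,j$. In turn, this is exactly the assertion that the coordinate vectors of $\phi$ and $\psi$ lie in a common $\mathbb{G}_\mathrm{m}$-orbit for the weighting $w_{i,j}=q^j-1$, i.e.\ $j_\phi=j_\psi$ in $M_{A,r}$. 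Thus an $\overline{L}$-isomorphism $\alpha$ forces $j_\phi=j_\psi$, and conversely if $j_\phi=j_\psi$ then the scaling factor $\alpha\in\overline{L}^{\times}$ furnished by the definition of the weighted projective space realises an $\overline{L}$-isomorphism $\phi\to\psi$.

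It remains to upgrade $(ii)$ to $(i)$, and this is the one genuine subtlety: as the remark following the definition of $M_{A,r}$ warns, the scaling factor $\alpha$ relating two $L$-rational coordinate vectors need not itself lie in $L$. I claim, however, that it automatically lies in $L^{\mathrm{sep}}$. Put $S=\{\,j\geq 1:a_{i,j}\neq 0\text{ for some }i\,\}$; because $\phi$ is a genuine Drinfeld module, $\phi_{T_i}(x)\neq T_ix$ for some $i$, so $S\neq\varnothing$. For each $j\in S$ we have $\alpha^{q^j-1}=b_{i,j}/a_{i,j}\in L$. Let $d=\gcd(S)\geq 1$; by $\gcd(q^a-1,q^b-1)=q^{\gcd(a,b)}-1$ and Bézout, $q^d-1$ is a $\ZZ$-linear combination of $\{q^j-1:j\in S\}$, so $\beta:=\alpha^{q^d-1}\in L^{\times}$. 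Hence $\alpha$ is a root of $x^{q^d-1}-\beta\in L[x]$, which is a separable polynomial because $q^d-1$ is prime to $\operatorname{char}L$; therefore $L(\alpha)/L$ is separable and $\alpha\in L^{\mathrm{sep}}$, so $\phi$ and $\psi$ are $L^{\mathrm{sep}}$-isomorphic, and together with the earlier implications this establishes all three equivalences. The main obstacle is precisely this last descent step; the two things to watch are that $S$ is nonempty (so that $\alpha$ is genuinely constrained to lie over $L$) and that all the weights $q^j-1$ are coprime to the characteristic (so that no inseparability can intrude) — both special features of the Drinfeld setting, with no analogue for weighted projective points in general.
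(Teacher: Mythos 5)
Your proof is correct, and the key descent step — extracting $\alpha^{q^j-1}\in L$ for each $j$ with a nonzero coefficient, using $\gcd\{q^j-1 : j\in S\}=q^{\gcd(S)}-1$ and B\'{e}zout to get $\alpha^{q^d-1}\in L^\times$, and then invoking separability of $x^{q^d-1}-\beta$ since $q^d-1$ is prime to the characteristic — is exactly the point the lemma turns on. Note that the paper itself gives no proof here, deferring to the reference \cite{jpaper}; your argument (conjugation formula $\alpha^{-1}\phi_{T_i}\alpha=\sum_j a_{i,j}\alpha^{q^j-1}\tau^j$ on generators, plus the separability descent) is the standard route and there is nothing to correct.
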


\subsection{Heights and valuations}

Throughout, we will make the convention of normalizing logarithms so that $\log q=1$, and write $\log^+ x$ for $\log\max\{1, x\}$.
To each prime ideal $\mathfrak{p}\subseteq A$, we associate a normalized valuation $v$, and set $|x|_v=q^{-v(x)\deg(v)}$ where, as usual, $\deg(v)=[A/\mathfrak{p} A:\FF_q]$, or equivalently, the number of points in the $\Gal(\overline{\FF_q}/\FF_q)$-orbit in $X(\FF_q)$ corresponding to $\mathfrak{p}$.  These valuations on $K$ form the set $M_K^0$ of \emph{finite places} of $K$, and the remaining absolute value $|\cdot|_\infty$ is the infinite place, the sole member of $M_K^\infty$.

If $L/K$ is a finite extension, we denote by $M_L$ the set of valuations extending the places in $M_K$, normalized so that if $w\in M_K$ is the place below $v\in M_L$, then  $|x|_v=|x|_w$ for all $x\in K$.  In other words, we take \[|x|_v=q^{-v(x)\deg(v)/[L_v:K_v]},\]
and note that this agrees with $|x|_w$ on $K$.
  We note two important facts, first that
\begin{equation}\label{eq:extensions}\sum_{v\mid w}[L_v:K_v]=[L:K]\end{equation}
for any $w\in M_K$, where $v\mid w$ means that $v$ is a place extending $w$ (see, e.g., \cite{localfields}), and
\begin{equation}\label{eq:productfla}\sum_{v\in M_L}[L_v:K_v]\log|x|_v=0,\end{equation}
for any non-zero $x\in L$.

For any $x\in L$, we will define the \emph{height} of $x$ to be
\[h(x)=\sum_{v\in M_L}\frac{[L_v:K_v]}{[L:K]}\log^+|x|_v.\]
Note that, by~\eqref{eq:extensions}, $h$ extends to a well-defined function $h:\overline{K}\to \RR$.

 More generally, if $\PP^{\vec{w}}$ is the weighted projective space with weights $\vec{w}=(w_0, ..., w_N)$ (all non-zero), then we define a height on $\PP^{\vec{w}}$ by
\[h([x_0:\cdots:x_N])=\sum_{v\in M_L}\frac{[L_v:K_v]}{[L:K]}\log\max\{|x_0|_v^{1/w_0},\cdots |x_1|_v^{1/w_N}\}.\]
Note that this is well-defined, by~\eqref{eq:productfla}, and satisfies the Northcott property, which is easily shown from the Northcott property of $\PP^N$ by examining the morphism $\Phi:\PP^{\vec{w}}\to \PP^{N}$ given by
\[\Phi([x_0:x_1:\cdots :x_N])=[x_0^{w_1w_2\cdots w_N}:x_1^{w_0w_2\cdots w_N}:\cdots :x_N^{w_0w_1\cdots w_{N-1}}].\]
(In particular, one notes that $h(\Phi(x))=\left(\prod w_i\right) h(x)$.)  In particular, for any given $A$ and $r$, and values $B_1$ and $B_2$, there are only finitely many points $j\in M_{A, r}(L^{\mathrm{sep}})$ with $h(j)\leq B_1$ and $[L(j):L]\leq B_2$.  In particular, there are only finitely many $L$-isomorphism classes of Drinfeld module $\phi/L$ with $h(j_\phi)\leq B_1$.


\section{Drinfeld modules over local fields}
\label{sec:localfields}

Let $\CC_v$ be any complete, algebraically closed $A$-field containing $K$, and let $\phi/\CC_v$ be a Drinfeld module.  We will denote by $\Aberk{\CC_v}$ the Berkovich analytic space associated to $\AA^1_{\CC_v}$, as described in \cite{baker-rumely}.  Briefly, this is the space of multiplicative seminorms on the ring $\CC_v[x]$ which extend the absolute value on $\CC_v$, appropriately topologized.  Every point $\zeta\in \CC_v$ may be identified with a (unique) seminorm $\|f\|_\zeta=|f(\zeta)|$, giving an embedding $\AA^1_{\CC_v}\to\Aberk{\CC_v}$; the former turns out to be dense in the latter. If $\|\cdot\|_\zeta$ is the multiplicative semi-norm associated to $\zeta\in\Aberk{\CC_v}$, then $a\cdot \zeta$ is defined by
\[\|f\|_{a\cdot \zeta}=\|f\circ\phi_a\|_\zeta.\]
It is shown in \cite{baker-rumely} that this gives a continuous map $\phi_a:\Aberk{\CC_v}\to \Aberk{\CC_v}$, and yields a continuous $A$-action on $\Aberk{\CC_v}$, where $A$ is considered with the discrete topology.  This extends the $A$-action $x\mapsto \phi_a(x)$ on the classical points $\AA^1_{\CC_v}\subseteq \Aberk{\CC_v}$.

Write $\Ocal_A(\zeta)$ for the $A$-orbit of $\zeta\in \Aberk{\CC_v}$.
The \emph{Berkovich filled Julia set $\fj{\phi}_\mathrm{Berk}\subseteq\Aberk{\CC_v}$} is the set of points $\zeta$ for which $\Ocal_A(\zeta)$ is bounded.  For any subfield $F\subseteq\CC_v$, we define $\fj{\phi}(F)=\fj{\phi}_\mathrm{Berk}\cap F$.  We will write $\phi^0_\mathrm{Berk}\subseteq\fj{\phi}_\mathrm{Berk}$ for the connected component containing 0, with respect to the Berkovich topology.

Note that $\fj{\phi}(\CC_v)$ is simply the set
\begin{equation*}
\fj{\phi}(\CC_v)=\left\{x\in\CC_v:\text{there exists a }B\geq 0\text{ such that }|\phi_a(x)|\leq B\text{ for all }a\in A\right\}.
\end{equation*}
If $G_\phi$ is the Green's function associated to $\phi$, as in \cite{jpaper}, then $\fj{\phi}_\mathrm{Berk}$ may de described as the locus on which $G_\phi$ vanishes, although we will not use that fact here.

For later convenience, we note that $\fj{\phi}(\CC_v)$ can be described as the, \emph{a priori} larger, Berkovich filled Julia set of $\phi_T(x)$, for any non-constant $T\in A$.  The fact that these filled Julia sets all coincide is a fact about commuting maps well-known in the context of holomorphic dynamics.
\begin{lemma}
For any non-constant $T\in A$, $\fj{\phi}_\mathrm{Berk}$ is the Berkovich filled Julia set associated to $\phi_T(x)$.
\end{lemma}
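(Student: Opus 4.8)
The plan is to prove the two inclusions of sets separately; the easy one just records that a bounded $A$-orbit has bounded $\phi_T$-orbit, while the substance is the converse. Write $\mathcal{K}$ for the Berkovich filled Julia set of the polynomial map $\phi_T\colon\Aberk{\CC_v}\to\Aberk{\CC_v}$, namely the set of $\zeta$ whose forward orbit under $\phi_T$ is bounded; since $\phi$ is a ring homomorphism the $n$-fold iterate of $\phi_T$ equals $\phi_{T^n}$, so this orbit is $\{\phi_{T^n}(\zeta):n\geq 0\}$. Because $T$ is non-constant we have $\deg T\geq 1$, hence $\phi_T$ has degree $|T|_\infty^r=q^{r\deg T}\geq q\geq 2$, which is precisely why $\phi_T$ — as opposed to a general $\phi_a$ — is an honest polynomial dynamical system.

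One inclusion is immediate: if $\zeta\in\fj{\phi}_\mathrm{Berk}$ then $\Ocal_A(\zeta)$ is bounded, and the $\phi_T$-orbit of $\zeta$ is a subset of $\Ocal_A(\zeta)$, so $\zeta\in\mathcal{K}$. For the reverse inclusion I would first recall the standard fact from Berkovich dynamics (see \cite{baker-rumely}) that, since $\phi_T$ has degree $d\geq 2$, the set $\mathcal{K}$ is bounded: writing $\phi_T(x)=\sum_j c_j x^j$, once $\|x\|_\zeta$ is large enough the top term strictly dominates in the multiplicative seminorm, so $\|x\|_{\phi_T(\zeta)}=|c_d|\,\|x\|_\zeta^{d}>\|x\|_\zeta$, and iteration forces the orbit of $\zeta$ to escape; thus $\mathcal{K}\subseteq\overline{D}(0,R)$, the Berkovich closed disk of some radius $R$ about $0$.

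The key step is to promote the $\phi_T$-invariance of $\mathcal{K}$ to invariance under the whole $A$-action. Fix $\zeta\in\mathcal{K}$ and $a\in A$. Since $A$ is commutative and $\phi$ a ring homomorphism, $\phi_a$ commutes with $\phi_T$, so for every $n$ one has $\phi_{T^n}(\phi_a(\zeta))=\phi_{aT^n}(\zeta)=\phi_a(\phi_{T^n}(\zeta))$. The orbit $\{\phi_{T^n}(\zeta):n\geq 0\}$ lies in $\overline{D}(0,R)$, and the polynomial map $\phi_a$ carries $\overline{D}(0,R)$ into the bounded set $\overline{D}(0,R')$ with $R'=\max_j|(\phi_a)_j|\,R^{j}$ (from $\|\phi_a\|_\zeta\leq\max_j|(\phi_a)_j|\,\|x\|_\zeta^{j}\leq R'$ on $\overline{D}(0,R)$). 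Hence $\{\phi_{T^n}(\phi_a(\zeta)):n\geq 0\}$ is bounded, i.e.\ $\phi_a(\zeta)\in\mathcal{K}$. Consequently, for $\zeta\in\mathcal{K}$ the entire orbit $\Ocal_A(\zeta)$ lies in $\mathcal{K}\subseteq\overline{D}(0,R)$ and is therefore bounded, so $\zeta\in\fj{\phi}_\mathrm{Berk}$; combining this with the easy inclusion gives $\mathcal{K}=\fj{\phi}_\mathrm{Berk}$.

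I do not expect a genuine obstacle: the only points needing care are the two non-archimedean facts used above — boundedness of the filled Julia set of a degree-$\geq 2$ polynomial on Berkovich affine space, and the stability of Berkovich disks under polynomial maps — and both reduce to the Gauss-norm estimates indicated, which are recorded in \cite{baker-rumely}. An alternative, essentially coordinate-free route runs through the Green's function: the escape-rate function $\zeta\mapsto\lim_{n\to\infty}q^{-nr\deg T}\log^{+}\|x\|_{\phi_{T^n}(\zeta)}$ is continuous, vanishes exactly on $\mathcal{K}$, and transforms under $\phi_T$ just as $G_\phi$ does, so it equals $G_\phi$, whence $\mathcal{K}$ and $\fj{\phi}_\mathrm{Berk}$ coincide as the common vanishing locus — but the direct argument above is shorter.
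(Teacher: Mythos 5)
Your proof is correct and follows essentially the same route as the paper: the trivial inclusion $\fj{\phi}_\mathrm{Berk}\subseteq\mathcal{K}$, then using the commutativity $\phi_a\circ\phi_T=\phi_T\circ\phi_a$ and the fact that $\phi_a$ carries bounded sets to bounded sets to show $\mathcal{K}$ is $A$-invariant, hence contained in $\fj{\phi}_\mathrm{Berk}$. The only difference is that you spell out explicitly the boundedness of $\mathcal{K}$ (needed to pass from $A$-invariance of $\mathcal{K}$ to boundedness of the $A$-orbit), a point the paper leaves implicit.
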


\begin{proof}
Let $J$ denote the Berkovich filled Julia set of $\phi_T(x)$, and note that $J\supseteq \fj{\phi}_\mathrm{Berk}$ trivially, since $\Ocal_A(\zeta)$ always contains the orbit under $\phi_T$ of $\zeta$.  On the other hand, it is straightforward to show that for any $a\in A$, $\phi_a(x)$ takes bounded sets in $\Aberk{\CC_v}$ to bounded sets, and so if $\zeta\in J$, we have
\[\{\phi_a(\zeta), \phi_T(\phi_a(\zeta)), \phi_T^2(\phi_a(\zeta)), ...\}=\phi_a\left(\{\zeta, \phi_T(\zeta), \phi_T^2(\zeta), ...\}\right)\]
bounded.  It follows that $J$ is closed under the action of $A$, and so $J\subseteq \fj{\phi}_\mathrm{Berk}$.
\end{proof}

Our first proposition gives some basic algebraic information about the sets $\fj{\phi}(\CC_v)$ and $\phi^0(\CC_v)$.

\begin{proposition}
The subsets $\phi^0(\CC_v)\subseteq \fj{\phi}(\CC_v)\subseteq \CC_v$ are submodules of $\phi(\CC_v)$.
\end{proposition}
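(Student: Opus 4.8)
The plan is to show that $\fj{\phi}(\CC_v)$ is closed under addition and under the $A$-action, and then that the same holds for the connected component $\phi^0(\CC_v)$. For the first part, recall from the explicit description above that $\fj{\phi}(\CC_v)$ consists of those $x\in\CC_v$ for which $\{|\phi_a(x)|:a\in A\}$ is bounded. Since each $\phi_a$ is an additive ($\FF_q$-linear) polynomial, $\phi_a(x+y)=\phi_a(x)+\phi_a(y)$, so $|\phi_a(x+y)|\le\max\{|\phi_a(x)|,|\phi_a(y)|\}$ by the ultrametric inequality; hence if $x,y\in\fj{\phi}(\CC_v)$ with orbit bounds $B_x,B_y$, then $x+y$ has orbit bounded by $\max\{B_x,B_y\}$. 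For the $A$-module structure, if $x\in\fj{\phi}(\CC_v)$ and $b\in A$, then $\{\phi_a(\phi_b(x)):a\in A\}=\{\phi_{ab}(x):a\in A\}\subseteq\{\phi_c(x):c\in A\}$, which is bounded; so $\phi_b(x)\in\fj{\phi}(\CC_v)$. Thus $\fj{\phi}(\CC_v)$ is a submodule of $\phi(\CC_v)$.

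For $\phi^0(\CC_v)$, the key point is that the $A$-action and addition are continuous on $\Aberk{\CC_v}$ (continuity of $\phi_a$ is quoted from \cite{baker-rumely}; continuity of addition is elementary), and continuous maps send connected sets to connected sets. Write $\overline{X}$ for the Berkovich closure as in the excerpt, and let $Z=\overline{\fj{\phi}(\CC_v)}$, so $\phi^0_{\mathrm{Berk}}$ is the connected component of $0$ in $Z$ and $\phi^0(\CC_v)=\phi^0_{\mathrm{Berk}}\cap\CC_v$. Since the $A$-action is continuous and $\fj{\phi}_{\mathrm{Berk}}$ is $A$-stable, each $\phi_b$ maps $Z$ into $Z$ and fixes $0$; hence $\phi_b$ maps the connected component of $0$ in $Z$ into itself, i.e.\ $\phi_b(\phi^0_{\mathrm{Berk}})\subseteq\phi^0_{\mathrm{Berk}}$. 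Intersecting with $\CC_v$ gives $\phi_b(\phi^0(\CC_v))\subseteq\phi^0(\CC_v)$.

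Closure under addition requires one more observation, since addition is a map of two variables. Consider the map $\sigma:\phi^0_{\mathrm{Berk}}\times\phi^0_{\mathrm{Berk}}\to\Aberk{\CC_v}$, $(\zeta,\eta)\mapsto\zeta+\eta$; this is continuous, the source is connected (a product of connected sets), and it sends $(0,0)$ to $0$. Moreover its image lies in $\overline{\fj{\phi}(\CC_v)}$: on the dense subset $\fj{\phi}(\CC_v)\times\fj{\phi}(\CC_v)$ of classical points the image lies in $\fj{\phi}(\CC_v)$ by the first paragraph, and by continuity the image of the closure lies in the closure. Hence $\sigma(\phi^0_{\mathrm{Berk}}\times\phi^0_{\mathrm{Berk}})$ is a connected subset of $Z$ containing $0$, so it lies in $\phi^0_{\mathrm{Berk}}$; intersecting with $\CC_v$ shows $\phi^0(\CC_v)$ is closed under addition. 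Combined with the $A$-action, $\phi^0(\CC_v)$ is a submodule, and it is visibly contained in $\fj{\phi}(\CC_v)$.

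The only mild subtlety — the "main obstacle," such as it is — is making sure the connectedness argument for addition is run on the Berkovich level rather than just on $\CC_v$, since $\phi^0(\CC_v)$ is defined via the Berkovich component and the classical points alone need not be connected; once one works with $\phi^0_{\mathrm{Berk}}$ and uses density of classical points together with continuity, everything goes through. One should also confirm that the Berkovich closure of an $A$-stable (resp.\ additively stable) set is again stable, which follows formally from continuity, and that products of connected topological spaces are connected, which is standard.
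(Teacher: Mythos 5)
Your first paragraph (closure of $\fj{\phi}(\CC_v)$ under addition and under the $A$-action) is correct and is essentially the paper's own argument, and your treatment of the $A$-action on $\phi^0$ is also sound: each $\phi_b$ is continuous on $\Aberk{\CC_v}$, preserves $Z=\overline{\fj{\phi}(\CC_v)}$ (it preserves the classical filled Julia set, hence its closure), and fixes $0$, so it maps the connected component of $0$ into itself.

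The gap is in the additive-closure step for $\phi^0(\CC_v)$. There is no ``elementary'' continuous addition map $\sigma:\Aberk{\CC_v}\times\Aberk{\CC_v}\to\Aberk{\CC_v}$ defined on the \emph{topological} product: the group law of $\mathbb{G}_\mathrm{a}^{\mathrm{an}}$ is a morphism out of the analytic fibre product $(\AA^2)^{\mathrm{an}}$, whose underlying space surjects onto, but is strictly larger than, the topological product, and the induced correspondence on the topological product is multivalued. Concretely, over the pair $(\zeta_{0,r},\zeta_{0,r})$ the fibre contains the seminorm $f(x,y)\mapsto\|f(x,-x)\|_{\zeta_{0,r}}$, whose image under $(x,y)\mapsto x+y$ is the classical point $0$, as well as the Gauss point of the bidisk, whose image is $\zeta_{0,r}$; so ``$\zeta+\eta$'' is not determined by $(\zeta,\eta)$, let alone continuously. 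One can make a canonical choice (the construction sending $(\zeta_{a,r},\zeta_{b,s})$ to $\zeta_{a+b,\max\{r,s\}}$), but that needs an actual definition and a continuity proof, not an appeal to elementariness. The cheap repair keeps your connectedness strategy but avoids two-variable addition: for a fixed classical $y\in\phi^0(\CC_v)$, the translation $\tau_y$, induced by the polynomial $x\mapsto x+y$, is continuous, maps $Z$ into $Z$ (because $y$ has bounded orbit, $\tau_y$ preserves $\fj{\phi}(\CC_v)$, hence its closure), and sends $0$ to $y$; therefore it maps the component of $0$ into the component of $y$, which is the component of $0$ since $y\in\phi^0_{\mathrm{Berk}}$, and intersecting with $\CC_v$ gives $x+y\in\phi^0(\CC_v)$. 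For comparison, the paper takes a different route altogether: it notes that $\fj{\phi}_{\mathrm{Berk}}$ is downward closed for the order $\preceq$, so $\phi^0_{\mathrm{Berk}}$ is a (possibly degenerate) Berkovich disk centred at $0$ --- explicitly $\mathcal{D}(0,q^{-j_{\phi_T,v}+c(\phi)})$ at finite places and $\{0\}$ at infinite places --- and a disk centred at $0$ is visibly an additive group stable under $\phi_T$; your topological argument, once repaired as above, buys the submodule statement without computing the component, but it gives none of the quantitative structure the paper uses later.
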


The proof of this will come in a sequence of lemmas.  We note, first of all, that it is quite simple to show that $\fj{\phi}(\CC_v)$ is a submodule of $\phi(\CC_v)$.

\begin{lemma}
The set $\fj{\phi}(\CC_v)$ is a submodule of $\phi(\CC_v)$.
\end{lemma}

\begin{proof}
If $\zeta_1\in \Ocal_A(\zeta_2)$, it is clear that $\Ocal_A(\zeta_1)\subseteq \Ocal_A(\zeta_2)$, and so $\fj{\phi}_\mathrm{Berk}$ is closed under the action of $A$.  It suffices to show, then, that $\fj{\phi}(\CC_v)$ is an additive subgroup of $\phi(\CC_v)$, which follows from the fact that $0\in \fj{\phi}(\CC_v)$, that
\[|\phi_a(x+y)|\leq \max\{|\phi_a(x)|, |\phi_a(y)|\}\]
for all $a\in A$ and $x, y\in \fj{\phi}(\CC_v)$, and the observation that $|\phi_a(-x)|=|\phi_a(x)|$.
\end{proof}

Since it will be useful later, we now present a disk of finite radius which contains $\fj{\phi}(\CC_v)$.
\begin{lemma}\label{lem:juliadisk}
Define a real number $B_T>0 $ by
\[\log B_T=\max\{\log|\xi|:\xi\in \phi[T]\}+\frac{1}{q^{r\deg(T)}-1}\log^+|T^{-1}|.\]
Then $\fj{\phi}(\CC_v)\subseteq D(0, B_T)$.
\end{lemma}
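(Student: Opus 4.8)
The plan is to show that any $x\in\CC_v$ with $|x|>B_T$ escapes to infinity under iteration of $\phi_T$, so that its $A$-orbit --- which contains $\phi_{T^n}(x)=\phi_T^n(x)$ for every $n$ --- is unbounded. By the explicit description of $\fj{\phi}(\CC_v)$ recorded above (take $a=T^n$), this gives $x\notin\fj{\phi}(\CC_v)$, hence $\fj{\phi}(\CC_v)\subseteq D(0,B_T)$.

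First I would record the factorization $\phi_T(x)=\Delta\prod_{\xi\in\phi[T]}(x-\xi)$, where $\Delta\in\CC_v^\times$ is the leading coefficient, $\phi[T]$ is the set of roots of $\phi_T$, and $d:=q^{r\deg(T)}=\deg\phi_T$. (In generic characteristic these roots are distinct, since $\phi_T'(x)=T\neq0$; in any case there are $d$ of them with multiplicity, and $0$ is a simple root.) Comparing the coefficients of $x$ on the two sides --- equivalently, dividing by $x$ and letting $x\to0$ --- gives $T=\Delta\prod_{0\neq\xi\in\phi[T]}(-\xi)$. Writing $R:=\max\{|\xi|:\xi\in\phi[T]\}$ and using that there are $d-1$ nonzero roots,
\[|\Delta|^{-1}=|T^{-1}|\prod_{0\neq\xi\in\phi[T]}|\xi|\leq|T^{-1}|\,R^{\,d-1}.\]
Since $\log B_T=\log R+\tfrac{1}{d-1}\log^+|T^{-1}|$, this yields $B_T\geq R$ and $B_T^{\,d-1}=R^{\,d-1}\max\{1,|T^{-1}|\}\geq|\Delta|^{-1}$, that is, $\log|\Delta|\geq-(d-1)\log B_T$.

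Next, for any $y\in\CC_v$ with $|y|>R$ the ultrametric inequality gives $|y-\xi|=|y|$ for every root $\xi$, so $|\phi_T(y)|=|\Delta|\,|y|^d$. Now take $|x|>B_T$ and set $M_n:=\log|\phi_T^n(x)|-\log B_T$; then $M_0>0$, and as long as $M_n>0$ (so that $|\phi_T^n(x)|>B_T\geq R$ and the escape estimate applies) we obtain
\[\log B_T+M_{n+1}=\log|\Delta|+d\bigl(\log B_T+M_n\bigr)\geq\log B_T+dM_n,\]
hence $M_{n+1}\geq dM_n$. By induction $M_n\geq d^nM_0>0$ for all $n$, and since $d=q^{r\deg(T)}\geq2$ we conclude $|\phi_T^n(x)|\to\infty$, so $\{\phi_{T^n}(x):n\geq0\}$ is unbounded and $x\notin\fj{\phi}(\CC_v)$. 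The argument is routine; the only steps requiring care are the evaluation of $|\Delta|$ in terms of the torsion --- which is exactly what forces $B_T^{\,d-1}\geq|\Delta|^{-1}$ with the stated exponent $\tfrac{1}{d-1}$ --- and the observation that $B_T\geq R$, which keeps the estimate $|\phi_T(y)|=|\Delta|\,|y|^d$ valid all along the orbit.
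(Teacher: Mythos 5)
Your proof is correct and follows essentially the same route as the paper: show that any $x$ with $|x|>B_T$ satisfies $|\phi_T(x)|=|\Delta|\,|x|^{q^{r\deg(T)}}$ and escapes under iteration of $\phi_T$, using the relation between $|\Delta|$, $|T|$, and the maximal size of the $T$-torsion. The only cosmetic difference is that you obtain the key bound $|\Delta|^{-1}\leq|T^{-1}|R^{d-1}$ by comparing the linear coefficient with the product of the nonzero roots, whereas the paper phrases the same fact as the average Newton-polygon slope of $\phi_T$ bounding $\max\{\log|\xi|:\xi\in\phi[T]\}$ from below, and you replace the paper's exact telescoped formula and limiting argument with the equivalent inductive inequality $M_{n+1}\geq dM_n$.
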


\begin{proof}
It is straightforward to see that if $|x|>B_T$, then
\[|\phi_T(x)|=|\Delta x^{q^{r\deg(T)}}|>|x|,\]
where $\Delta$ is the leading term of $\phi_T(x)$.  By induction,
\[q^{-Nr\deg(T)}\log|\phi_{T^N}(x)|=\log|x|+\frac{1-q^{-Nr\deg(T)}}{q^{r\deg(T)}-1}\log|\Delta|.\]
If $x\in\fj{\phi}(\CC_v)$, then the left-hand side tends to 0 with $N\to\infty$, but the right-hand side tends to
\begin{eqnarray*}
\log|x|+\frac{1}{q^{r\deg(T)}-1}\log|\Delta|&>&\frac{1}{q^{r\deg(T)}-1}\left(\log\left|\frac{T}{\Delta}\right|+\log^+|T^{-1}|+\log|\Delta|\right)\\
&=&\frac{1}{q^{r\deg(T)}-1}\log^+|T|\geq 0,
\end{eqnarray*}
since $\frac{1}{q^{r\deg(T)}-1}\log|T/\Delta|$ is the average of the slopes in the Newton polygon for $\phi_T(x)$, and hence a lower bound for $\max\{\log|\xi|:\xi\in \phi[T]\}$.

\end{proof}

Now that we have some description of $\fj{\phi}(\CC_v)$, we will return to the quantity $c(\phi)$ defined above.
For any polynomial $f(x)\in \CC_v[x]$ of degree $d\geq 2$, with leading coefficient $a_d$, let
\[c_v(f)=\frac{1}{d-1}\log|a_d^{-1}|_v.\]  The following is Theorem~4.1 of \cite{baker-hsia}, although we warn the reader that our $c_v(f)$ is the logarithm of theirs.
\begin{theorem}[Baker-Hsia \cite{baker-hsia}]
The (logarithmic) transfinite diameter of the filled Julia set of $f$ is $c_v(f)$.
\end{theorem}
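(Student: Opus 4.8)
The theorem is quoted verbatim from \cite{baker-hsia}, so strictly speaking nothing is proved here; I sketch the argument one gives for it. The plan is to identify the dynamical (escape-rate) Green's function of $f$ with the Berkovich Green's function of its filled Julia set, and then read the logarithmic transfinite diameter off the behaviour of that function near $\infty$.

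First I would introduce the escape-rate function
\[G_f(z)=\lim_{n\to\infty}d^{-n}\log^+|f^n(z)|\]
on $\CC_v$, extended continuously to $\Aberk{\CC_v}$. The limit exists because the telescoped corrections $d^{-(k+1)}\bigl(\log^+|f^{k+1}(z)|-d\log^+|f^k(z)|\bigr)$ are uniformly bounded and decay geometrically once the orbit leaves a fixed large disk (cf.\ Lemma~\ref{lem:juliadisk}); moreover $G_f$ is non-negative, satisfies $G_f(f(z))=d\,G_f(z)$, and vanishes exactly on the Berkovich filled Julia set $K_f$ of $f$, since $G_f(z)=0$ precisely when the orbit of $z$ is bounded.

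Second I would compute $G_f$ near $\infty$: for $|z|$ larger than $\max_i(|a_i|/|a_d|)^{1/(d-i)}$ the ultrametric inequality gives $|f(z)|=|a_d|\,|z|^d$, and iterating and summing the geometric series $\tfrac1d+\tfrac1{d^2}+\cdots=\tfrac1{d-1}$ yields
\[G_f(z)=\log|z|+\tfrac{1}{d-1}\log|a_d|=\log|z|-c_v(f)\]
for all such $z$. Finally, non-archimedean potential theory (\cite{baker-rumely}) identifies $G_f$ with the Green's function $g_{K_f}(\cdot,\infty)$ of the compact set $K_f$ with pole at $\infty$: the function $z\mapsto d^{-n}\log^+|f^n(z)|$ is the potential of the probability measure $d^{-n}(f^n)^{*}\delta_{\zeta_{\mathrm{Gauss}}}$ (for the Gauss point $\zeta_{\mathrm{Gauss}}$, whose equilibrium potential is $\log^+|\cdot|$, the Robin constant of the closed unit disk being $0$), these potentials converge uniformly to $G_f$, and the limiting measure is the equilibrium measure of $K_f$. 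Since the Green's function is normalised by $g_{K_f}(z,\infty)=\log|z|-\log\mathrm{d}_\infty(K_f)+o(1)$ at $\infty$, where $\mathrm{d}_\infty$ denotes the transfinite diameter, comparison with the displayed formula gives $\log\mathrm{d}_\infty(K_f)=c_v(f)$, which is the assertion.

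The delicate step is the last one: making rigorous, inside Berkovich potential theory, that $G_f$ really is the Green's function of $K_f$, i.e.\ controlling its Laplacian and the weak convergence of the pulled-back measures. A more elementary route sidestepping most of this is to prove instead the capacity pull-back formula $\mathrm{d}_\infty\!\bigl(f^{-1}(S)\bigr)=\bigl(\mathrm{d}_\infty(S)/|a_d|\bigr)^{1/d}$ for compact $S\subseteq\CC_v$, together with the easy total invariance $f^{-1}(K_f)=K_f$; taking $S=K_f$ then yields $\mathrm{d}_\infty(K_f)^{d-1}=|a_d|^{-1}$ directly. The upper bound $\mathrm{d}_\infty(K_f)\le|a_d|^{-1/(d-1)}$ can in any case be seen by hand: the monic normalisations $a_d^{-(d^n-1)/(d-1)}f^n$ of the iterates are uniformly bounded on the forward-invariant set $K_f$, and taking $d^n$-th roots of their sup-norms over $K_f$ and letting $n\to\infty$ gives a bound of $|a_d|^{-1/(d-1)}=e^{c_v(f)}$ on the Chebyshev limit.
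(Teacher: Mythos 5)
This statement is quoted from Baker--Hsia and the paper supplies no proof of it (it is used as a black box, with the remark that the paper's $c_v(f)$ is the logarithm of their capacity), so there is no internal argument to compare against; what you give is a sketch of the external result, and it is a faithful outline of how Baker--Hsia actually proceed: identify the escape-rate function $G_f$ with the Green's function of the Berkovich filled Julia set with pole at $\infty$, compute $G_f(z)=\log|z|+\tfrac{1}{d-1}\log|a_d|=\log|z|-c_v(f)$ for $|z|$ large, and read the Robin constant, equivalently $\log$ of the transfinite diameter, off that expansion. Your normalizations are consistent with the paper's convention. Two small caveats if you were to flesh this out. First, in your alternative route via the pull-back formula $\mathrm{d}_\infty(f^{-1}(S))=(\mathrm{d}_\infty(S)/|a_d|)^{1/d}$ applied to $S=K_f$, the step from $\mathrm{d}_\infty(K_f)=(\mathrm{d}_\infty(K_f)/|a_d|)^{1/d}$ to $\mathrm{d}_\infty(K_f)^{d-1}=|a_d|^{-1}$ divides by $\mathrm{d}_\infty(K_f)$ and so silently assumes the capacity is positive; this is true but needs a separate remark (e.g.\ that the Berkovich filled Julia set contains a non-classical point, or that $G_f$ is bounded above off a neighbourhood of $\infty$), since the functional equation alone is also satisfied by capacity $0$. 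Second, the classical filled Julia set in $\CC_v$ need not be compact, so the ``Green's function of the compact set $K_f$'' must be interpreted for the Berkovich filled Julia set in $\Aberk{\CC_v}$, exactly as you indicate in the final step; this is where the Baker--Rumely potential theory is genuinely needed rather than decorative. With those points acknowledged, your sketch is correct and matches the cited source's approach.
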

As a corollary, since $\fj{\phi}_\mathrm{Berk}$ is the filled Julia set of any $\phi_T(x)$,  we note that $c_v(\phi_T)$ is independent of the choice of $T\in A\setminus\FF_q$.  We will write $c_v(\phi)$ for this quantity, or $c(\phi)$ when the valuation is clear.

We now proceed with a more precise description of $\phi^0(\CC_v)$, depending somewhat on the flavour of the valuation $v$. 
\begin{lemma}
The set $\phi^0_\mathrm{Berk}$ is a (possibly degenerate) disk in $\Aberk{\CC_v}$.
\end{lemma}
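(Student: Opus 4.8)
The plan is to determine $\fj{\phi}_\mathrm{Berk}$ along the segment $[0,\infty]$ joining $0$ to $\infty$ in $\Aberk{\CC_v}$, and then to read off from the tree structure that the component $\phi^0_\mathrm{Berk}$ is a disk. Write $d=\deg(T)$ and $\phi_T(x)=\sum_{i=0}^{rd}c_ix^{q^i}$, so that $c_0=T$ and $c_{rd}=\Delta$ is the leading coefficient, and for $t\geq 0$ let $\zeta_t\in\Aberk{\CC_v}$ be the point associated to the disk $D(0,t)$; thus $\zeta_0=0$ and the $\zeta_t$ sweep out the segment $[0,\infty]$. By multiplicativity of the Gauss seminorm (see \cite{baker-rumely}), $\|\phi_T(x)\|_{\zeta_t}=\psi(t)$, where
\[\psi(t):=\max_{0\leq i\leq rd}|c_i|_v\,t^{q^i};\]
hence $\phi_T$ carries $\zeta_t$ to $\zeta_{\psi(t)}$, and, since $\|f\|_\zeta\leq\|f\|_{\zeta_t}$ for every $\zeta\in D(0,t)$, it maps $D(0,t)$ into $D(0,\psi(t))$. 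The function $\psi$ is continuous and non-decreasing, it equals $|T|_vt$ for $t$ near $0$ and $|\Delta|_vt^{q^{rd}}$ for $t$ large, and the estimate from the proof of Lemma~\ref{lem:juliadisk} gives $\psi(t)>t$ once $t>B_T$.

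Since $\fj{\phi}_\mathrm{Berk}$ is the filled Julia set of $\phi_T$ (previous lemma) and $\phi_T^n(\zeta_t)=\zeta_{\psi^n(t)}$, the set $S:=\{t\geq 0:\zeta_t\in\fj{\phi}_\mathrm{Berk}\}$ equals $\{t:\sup_n\psi^n(t)<\infty\}$. It contains $0$, lies in $[0,B_T]$, is downward closed (monotonicity of $\psi$ gives $\psi^n(s)\leq\psi^n(t)$ for $s\leq t$), and is forward invariant under $\psi$. These force $S=[0,\rho]$ for some $0\leq\rho\leq B_T$: if $\rho:=\sup S$ were not in $S$, then forward invariance would give $\psi(t)<\rho$ for all $t\in S$, hence $\psi(\rho)\leq\rho$ by continuity, so the orbit $\{\psi^n(\rho)\}$ would be non-increasing and bounded, contradicting $\rho\notin S$. (Alternatively one can first check that $\fj{\phi}_\mathrm{Berk}$ is closed — its complement, the set of points with unbounded orbit, is open because $\zeta\mapsto\|x\|_{\phi_T^n(\zeta)}$ is continuous and every $\zeta$ with $\|x\|_{\phi_T^n(\zeta)}>B_T$ escapes — so its component $\phi^0_\mathrm{Berk}$ is closed, which forces the same conclusion.)

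I then claim $\phi^0_\mathrm{Berk}=D(0,\rho)$, a (closed) disk about $0$, degenerate precisely when $\rho=0$. For the inclusion $\supseteq$: iterating $\phi_T(D(0,s))\subseteq D(0,\psi(s))$ and using monotonicity of $\psi$ gives $\phi_T^n(D(0,\rho))\subseteq D(0,\psi^n(\rho))\subseteq D(0,\sup_n\psi^n(\rho))$, so every point of $D(0,\rho)$ has bounded $\phi_T$-orbit and lies in $\fj{\phi}_\mathrm{Berk}$; as $D(0,\rho)$ is connected and contains $0$, it lies in $\phi^0_\mathrm{Berk}$. For $\subseteq$: if $\zeta\notin D(0,\rho)$ then $\|x\|_\zeta>\rho$, and since $\Aberk{\CC_v}$ is uniquely path-connected with the path from $0$ to $\zeta$ passing through $\zeta_t$ for every $t\leq\|x\|_\zeta$, that path meets some $\zeta_{t_0}$ with $\rho<t_0\leq\|x\|_\zeta$; then $t_0\notin S$, i.e. $\zeta_{t_0}\notin\fj{\phi}_\mathrm{Berk}$, so no connected subset of $\fj{\phi}_\mathrm{Berk}$ contains both $0$ and $\zeta$, whence $\zeta\notin\phi^0_\mathrm{Berk}$. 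This gives $\phi^0_\mathrm{Berk}=D(0,\rho)$.

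The main ingredients — multiplicativity of the Gauss norm, the explicit piecewise-monomial shape of $\psi$, and the elementary tree geometry of $\Aberk{\CC_v}$ — are all standard. The point needing the most care is the final inclusion, namely the precise statement that any point lying outside $D(0,\rho)$ is separated from $0$ by some $\zeta_{t_0}\notin\fj{\phi}_\mathrm{Berk}$; establishing at the outset that $\fj{\phi}_\mathrm{Berk}$, hence $\phi^0_\mathrm{Berk}$, is closed both simplifies this and confirms that the disk obtained is genuinely the closed disk $D(0,\rho)$.
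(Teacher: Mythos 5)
Your proof is correct, but it takes a more explicit route than the paper. The paper's argument is a soft, order-theoretic one: it observes that $\fj{\phi}_\mathrm{Berk}$ is downward-closed for the partial order $\preceq$ on $\Aberk{\CC_v}$ (the same monotonicity you use when you note $\phi_T(\mathcal{D}(0,t))\subseteq\mathcal{D}(0,\psi(t))$), concludes that it is a union of Berkovich disks, and then identifies $\phi^0_\mathrm{Berk}$ with the largest disk about $0$ contained in it — two lines, no use of Lemma~\ref{lem:juliadisk} and no analysis of the induced dynamics. You instead restrict to the central ray of Gauss points $\zeta_{0,t}$, compute the induced piecewise-monomial map $\psi$, show the trace of $\fj{\phi}_\mathrm{Berk}$ on that ray is a closed interval $[0,\rho]$, and then combine the monotonicity estimate with the tree separation argument (any connected set containing $0$ and $\zeta$ contains the arc $[0,\zeta]$, hence all $\zeta_{0,t}$ with $t\leq\|x\|_\zeta$ — this is the one step you should attribute explicitly to the structure theory in \cite{baker-rumely}). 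What your version buys is genuinely more information: it settles that the component is the \emph{closed} disk $\mathcal{D}(0,\rho)$ with $\rho$ the largest radius whose Gauss point has bounded orbit, a point the paper's phrase ``largest Berkovich disk containing $0$'' quietly glosses over (a priori the union of disks inside $\fj{\phi}_\mathrm{Berk}$ containing $0$ could be an increasing union not attained by any single closed disk), and it anticipates the explicit determination of the radius carried out later in Lemma~\ref{lem:finitephi0}. The cost is length and the reliance on the escape estimate from Lemma~\ref{lem:juliadisk}, which the paper's proof does not need.
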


\begin{proof}
Let $\preceq$ denote the usual partial ordering on $\Aberk{\CC_v}$, so that if $\zeta_1, \zeta_2\in\Aberk{\CC_v}$, then $\zeta_1\preceq \zeta_2$ if and only if $\|f\|_{\zeta_1}\leq \|f\|_{\zeta_2}$ for all $f\in \CC_v[x]$.  We note that $\fj{\phi}(\CC_v)$ is downward-closed with respect to $\preceq$, and hence is a union of (Berkovich) disks, noting that the Berkovich disk $\mathcal{D}(a, r)$ is defined to be the set of $\zeta\in\Aberk{\CC_v}$ such that $\zeta\preceq \zeta_{a, r}$ in the notation of \cite{baker-rumely}; the point $a\in \CC_v$ is simply the disk $\mathcal{D}(a; 0)$.  It follows that $\phi^0_\mathrm{Berk}$ is the largest Berkovich disk containing $0$, and contained in $\fj{\phi}_\mathrm{Berk}$.
\end{proof}

Before describing the structure of $\phi^0(\CC_v)$, we introduce one new piece of notation.  Fix a basis $T_1, ..., T_m$ for $A$ as an $\FF_q$-algebra, as above.  For a Drinfeld module $\phi/\CC_v$, let $a_{i, j}\in\CC_v$ be defined by
\[\phi_{T_i}(x)=T_ix+a_{i, 1}x^q+\cdots+a_{i, r\deg(T_i)}x^{q^{t\deg(T_i)}},\]
for each $i$, and set
\[j_{\phi, v}=\max_{\substack{1\leq i\leq m\\1\leq j\leq r\deg(T_i)}}\left\{\frac{1}{q^j-1}\log|a_{i, j}|\right\}+c(\phi).\]
In the context of a global function field, $j_{\phi, v}$ will be the local contribution to the height of $j_\phi$.
Similarly, for each non-constant $T\in A$, we let $j_{\phi_T, v}=j_{\phi\mid \FF_q[T], v}$.  In other words, we set
\[j_{\phi_T, v}=\max_{1\leq j\leq r\deg(T)}\left\{\frac{1}{q^j-1}\log|a_{j}|\right\}+c(\phi).\]

\begin{lemma}
We have $j_{\phi, v}\geq 0$, and if $\phi/\CC_v$ has potentially good reduction, then $j_{\phi, v}=0$.  If $v$ is a finite place, i.e.\ if $A$ is a subring of the ring of integers, then $j_{\phi, v}=0$ if and only if $\phi/L$ has potentially good reduction.
\end{lemma}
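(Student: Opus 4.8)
The plan is to treat the three assertions in turn; only the converse at a finite place has any real content.

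\emph{Nonnegativity and isomorphism invariance.} Fix a generator $T_1$ of $A$, so $\phi_{T_1}(x)=T_1x+a_{1,1}x^q+\cdots+a_{1,r\deg(T_1)}x^{q^{r\deg(T_1)}}$ has degree $q^{r\deg(T_1)}$ and leading coefficient $a_{1,r\deg(T_1)}$. By the discussion preceding the statement, $c(\phi)=c_v(\phi_{T_1})=\frac{1}{q^{r\deg(T_1)}-1}\log\bigl|a_{1,r\deg(T_1)}^{-1}\bigr|_v$, and since the pair $(i,j)=(1,r\deg(T_1))$ already occurs in the maximum defining $j_{\phi,v}$ we get
\[
j_{\phi,v}\ \geq\ \frac{1}{q^{r\deg(T_1)}-1}\log\bigl|a_{1,r\deg(T_1)}\bigr|_v+c(\phi)\ =\ 0 .
\]
I will also use the elementary observation that $j_{\phi,v}$ depends only on the $\CC_v$-isomorphism class of $\phi$: passing to $\psi$ with $\psi_a(x)=\alpha^{-1}\phi_a(\alpha x)$ multiplies each $a_{i,j}$ by $\alpha^{q^j-1}$ and the leading coefficient of $\phi_{T_1}$ by $\alpha^{q^{r\deg(T_1)}-1}$, so $\max_{i,j}\frac{1}{q^j-1}\log|a_{i,j}|_v$ and $c(\phi)$ each change by $\pm\log|\alpha|_v$ and $j_{\phi,v}$ is unaffected.

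\emph{Potential good reduction gives $j_{\phi,v}=0$.} Since $\CC_v$ is complete and algebraically closed, potential good reduction coincides with good reduction over $\Ocal_{\CC_v}$; thus $\phi$ is $\CC_v$-isomorphic to a Drinfeld module $\psi$ all of whose coefficients lie in $\Ocal_{\CC_v}$ and such that the leading coefficient of $\psi_{T_1}$ is a unit. Then $\frac{1}{q^j-1}\log|a_{i,j}(\psi)|_v\le0$ for all $i,j$ and $c(\psi)=0$, so $j_{\psi,v}\le0$; with the nonnegativity and isomorphism invariance above, $j_{\phi,v}=j_{\psi,v}=0$.

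\emph{The converse at a finite place.} Assume $v$ is finite, so $A\subseteq\Ocal_{\CC_v}$, and $j_{\phi,v}=0$. Set $M=\max_{i,j}\frac{1}{q^j-1}\log|a_{i,j}|_v$, so $c(\phi)=-M$; because $c(\phi)=c_v(\phi_{T_1})=\frac{1}{q^{r\deg(T_1)}-1}\log\bigl|a_{1,r\deg(T_1)}^{-1}\bigr|_v$, the maximum $M$ is attained at the leading coefficient of $\phi_{T_1}$, that is $\frac{1}{q^{r\deg(T_1)}-1}\log\bigl|a_{1,r\deg(T_1)}\bigr|_v=M$. Let $\beta\in\CC_v$ be a $(q^{r\deg(T_1)}-1)$-th root of $a_{1,r\deg(T_1)}$ (available since $\CC_v$ is algebraically closed), put $\alpha=\beta^{-1}$, and let $\psi$ be given by $\psi_a(x)=\alpha^{-1}\phi_a(\alpha x)$. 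Then $\log|\alpha|_v=-M$, so
\[
\log|a_{i,j}(\psi)|_v=(q^j-1)\Bigl(\tfrac{1}{q^j-1}\log|a_{i,j}|_v-M\Bigr)\le0
\]
for all $i,j$, so $\psi$ has coefficients in $\Ocal_{\CC_v}$, while the $(i,j)=(1,r\deg(T_1))$ term equals $0$, so the leading coefficient of $\psi_{T_1}$ is a unit. Reducing this model modulo the maximal ideal of $\Ocal_{\CC_v}$ gives a ring homomorphism $\bar\psi\colon A\to\mathrm{End}_{\bar k}(\mathbb{G}_\mathrm{a})$ which is nonconstant (the leading coefficient of $\bar\psi_{T_1}$ is nonzero), hence a Drinfeld module over the residue field $\bar k$; since $\deg(\bar\psi_{T_1}(x))=q^{r\deg(T_1)}=\deg(\psi_{T_1}(x))$, while Drinfeld's theorem (valid in any characteristic) forces $\deg(\bar\psi_{T_1}(x))=|T_1|_\infty^{r'}=q^{r'\deg(T_1)}$ for $r'$ the rank of $\bar\psi$, we conclude $r'=r$. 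Thus $\psi$, and therefore $\phi$, has good reduction.

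\emph{Where the difficulty lies.} The only nontrivial ingredient is the identity $c(\phi)=c_v(\phi_{T_1})$ — equivalently, the fact (obtained via Baker--Hsia) that $c_v(\phi_T)$ is independent of the non-constant $T$: this is exactly what upgrades the innocuous-looking relation $j_{\phi,v}=0$ into the statement that, after a suitable rescaling, the leading coefficient of $\phi_{T_1}$ becomes a unit of $\Ocal_{\CC_v}$. Everything else is routine reduction theory, and the finiteness of $v$ enters only through $A\subseteq\Ocal_{\CC_v}$; that inclusion also fails at the infinite place, where moreover no model of $\phi$ has integral coefficients (the linear coefficient of $\phi_{T_i}$ is $T_i$, with $|T_i|_\infty>1$) and the hypothesis of potential good reduction is vacuous, which is why the converse must exclude $\infty$.
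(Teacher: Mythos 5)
Your proof is correct and follows essentially the same route as the paper's: nonnegativity because $-c(\phi)$ is one of the terms in the maximum, and both directions of the reduction statement by using isomorphism invariance to normalize so that the leading coefficient of $\phi_{T_1}$ is a unit (equivalently $c=0$) and then reading off the definition of $j_{\phi,v}$. Your write-up is merely more explicit than the paper's (the explicit root $\beta$ of the leading coefficient, the check that the reduction has full rank, and the role of $A\subseteq\Ocal_{\CC_v}$ at finite places), but there is no difference in method.
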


\begin{proof}
The first claim is trivial, since $c(\phi)$ is equal to at least one of the terms appearing in the maximum defining $j_{\phi, v}$.  If $\phi$ has potentially good reduction then, since $j_{\phi, v}$ is an isomorphism invariant, we may assume without loss of generality that $\phi$ has good reduction.  Then the coefficients of $\phi_a(x)$ are integral for all $a\in A$, and for some generator $T_i$ of $A$ over $\FF_q$, one of the coefficients of $\phi_{T_i}(x)$ is a unit.  This gives $j_{\phi, v}=0$.

On the other hand, suppose that $j_{\phi, v}=0$.  Then we may choose an $\alpha\in L_v^\mathrm{sep}$ such that $c(\alpha^{-1}\phi\alpha)=0$.  Now,
$j_{\alpha^{-1}\phi\alpha, v}=j_{\phi, v}=0$, and considering the definition of $j_{\alpha^{-1}\phi\alpha, v}$ we see that the coefficients of $(\alpha^{-1}\phi\alpha)_{T_i}(x)$ are integral, for all $i$, and at least one is a unit.  In particular, $\phi$ is $L^\mathrm{sep}$-isomorphic to a Drinfeld module of good reduction.
\end{proof}

\begin{remark}
Defining $j_{E, v}=\log^+|j_E|_v$, for an elliptic curve $E$ over a local field with valuation $v$, we also have $j_{E, v}\geq 0$ with equality if and only if $E$ has potentially good reduction.
\end{remark}

The following lemma describes $\phi^0(\CC_v)$ in the case where $v$ is a finite place.  A useful corollary of this lemma, however, is that just like $c(\phi)$, the quantity $j_{\phi_T, v}$ does not depend on the choice of $T\in A$.  In particular, we have $j_{\phi,v}=j_{\phi_T, v}$ for any non-constant $T\in A$, which allows us to proceed in some of our proofs as if we were dealing only with $\FF_q[T]$-modules.
\begin{lemma}\label{lem:finitephi0}
Suppose that $v$ is a finite place, and fix a non-constant $T\in A$.  Then
\[\phi^0_\mathrm{Berk}=\mathcal{D}(0, q^{-j_{\phi_T, v}+c(\phi)}),\]
 and $\phi^0(\CC_v)$ is a submodule of $\fj{\phi}(\CC_v)$.
\end{lemma}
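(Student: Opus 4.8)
The plan is to pin down $\phi^0_\mathrm{Berk}$ explicitly as a Berkovich disk and then read off the submodule statement. By the preceding lemma, $\phi^0_\mathrm{Berk}$ is the largest Berkovich disk $\mathcal{D}(0,\rho)$ contained in $\fj{\phi}_\mathrm{Berk}$, and by the lemma identifying $\fj{\phi}_\mathrm{Berk}$ with the filled Julia set of $\phi_T(x)=Tx+a_1x^q+\cdots+a_{r\deg(T)}x^{q^{r\deg(T)}}$ we may work with this single polynomial. I would use the standard description (\cite{baker-rumely}) of how a polynomial acts on Berkovich disks: $\phi_T$ carries $\mathcal{D}(0,\rho)$ onto $\mathcal{D}(0,R(\rho))$, where $R(\rho)=\max\{|T|_v\rho,\ \max_j|a_j|_v\rho^{q^j}\}$, sending the boundary point $\zeta_{0,\rho}$ to $\zeta_{0,R(\rho)}$.

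Set $\rho_0:=\min\{|a_j|_v^{-1/(q^j-1)}:a_j\neq 0\}$; since $\log q=1$, this is exactly $q^{-j_{\phi_T,v}+c(\phi)}$. First I would check $\mathcal{D}(0,\rho_0)\subseteq\phi^0_\mathrm{Berk}$: because $v$ is a finite place we have $|T|_v\le 1$, so $|T|_v\rho_0\le\rho_0$, while $|a_j|_v\rho_0^{q^j}=\rho_0\cdot(|a_j|_v\rho_0^{q^j-1})\le\rho_0$ by the choice of $\rho_0$; hence $R(\rho_0)\le\rho_0$, the disk $\mathcal{D}(0,\rho_0)$ is forward-invariant under $\phi_T$, every $\phi_T$-orbit inside it is bounded, and so $\mathcal{D}(0,\rho_0)\subseteq\fj{\phi}_\mathrm{Berk}$; being connected and containing $0$, it lies in $\phi^0_\mathrm{Berk}$. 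For the reverse inclusion, take $\rho>\rho_0$; then $\delta:=|a_j|_v\rho^{q^j-1}>1$ for some $j$ with $a_j\neq 0$, and since $\rho'\mapsto|a_j|_v(\rho')^{q^j}$ is increasing one gets $R(\rho')\ge\delta\rho'$ for every $\rho'\ge\rho$. Iterating, $\phi_T^n(\zeta_{0,\rho})=\zeta_{0,\rho_n}$ with $\rho_n\ge\delta^n\rho\to\infty$, so $\zeta_{0,\rho}$ has unbounded $\phi_T$-orbit and lies outside $\fj{\phi}_\mathrm{Berk}$. Thus no disk of radius exceeding $\rho_0$ is contained in $\fj{\phi}_\mathrm{Berk}$, forcing $\phi^0_\mathrm{Berk}=\mathcal{D}(0,\rho_0)$.

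For the last assertion, $\phi^0(\CC_v)=\phi^0_\mathrm{Berk}\cap\CC_v$ is the classical closed disk $D(0,\rho_0)$, which is closed under addition and negation by the ultrametric inequality. To see it is $A$-stable, note that $\fj{\phi}_\mathrm{Berk}$ is $A$-invariant, each $\phi_a:\Aberk{\CC_v}\to\Aberk{\CC_v}$ is continuous, and $\phi_a(0)=0$; hence $\phi_a(\phi^0_\mathrm{Berk})$ is a connected subset of $\fj{\phi}_\mathrm{Berk}$ containing $0$, so it is contained in $\phi^0_\mathrm{Berk}$, and intersecting with $\CC_v$ gives $\phi_a(D(0,\rho_0))\subseteq D(0,\rho_0)$. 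Therefore $\phi^0(\CC_v)$ is a submodule of $\phi(\CC_v)$, and it sits inside $\fj{\phi}(\CC_v)$ because $\phi^0_\mathrm{Berk}\subseteq\fj{\phi}_\mathrm{Berk}$.

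The step I expect to be the main obstacle is the reverse inclusion $\phi^0_\mathrm{Berk}\subseteq\mathcal{D}(0,\rho_0)$, i.e.\ ruling out any larger disk inside the filled Julia set. Argued purely with classical points, this would require handling possible cancellation among the terms $a_jx^{q^j}$ of $\phi_T(x)$ when several of the quantities $|a_jx^{q^j}|_v$ coincide; passing to the Berkovich boundary points $\zeta_{0,\rho}$ and using the exact formula for $\phi_T(\mathcal{D}(0,\rho))$ sidesteps this, and I expect this to be the only place where the Berkovich structure is genuinely needed. A small bookkeeping point is that $\rho_0$ depends on $T$ only through $j_{\phi_T,v}$ and $c(\phi)$, so the identity $\phi^0_\mathrm{Berk}=\mathcal{D}(0,q^{-j_{\phi_T,v}+c(\phi)})$, whose left-hand side is manifestly independent of $T$, also delivers the $T$-independence of $j_{\phi_T,v}$ noted above.
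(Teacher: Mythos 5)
Your argument is correct, and it reaches the same disk $\mathcal{D}(0,\rho_0)$ with $\rho_0=q^{-j_{\phi_T,v}+c(\phi)}$, but the decisive step is carried out differently from the paper. The paper first normalizes by an isomorphism so that $j_{\phi_T,v}=c(\phi)$ (all coefficients of $\phi_T$ integral, at least one a unit) and then proves the upper bound \emph{backwards}: it fixes a classical point $\zeta\notin\fj{\phi}(\CC_v)$ and, via an induction tracking the largest unit coefficient of $\phi_{T^N}$ under composition together with a Newton-polygon analysis of $\phi_{T^N}(x)-\zeta$, produces preimages $\beta$ with $|\beta|=|\zeta|^{1/(q^{NI}-1)}\to 1$, so that every disk of radius $>1$ contains a point escaping the filled Julia set. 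You instead argue \emph{forwards} on the Berkovich boundary point: using the standard radius-transformation formula $\phi_T(\zeta_{0,\rho})=\zeta_{0,R(\rho)}$ with $R(\rho)=\max\{|T|_v\rho,\max_j|a_j|_v\rho^{q^j}\}$, you show $R(\rho_0)\le\rho_0$ (here finiteness of $v$ enters through $|T|_v\le1$, exactly as in the paper) and that for $\rho>\rho_0$ the radius grows at least geometrically, so $\zeta_{0,\rho}$ escapes. This buys a shorter proof with no normalization and no composition bookkeeping, at the cost of invoking the disk-mapping formula from Baker--Rumely (which the paper could equally cite); the two mechanisms are genuinely different ways of certifying that no disk of radius exceeding $\rho_0$ sits inside $\fj{\phi}_\mathrm{Berk}$. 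Your treatment of the submodule claim is also slightly stronger than the paper's: the paper deduces $A$-stability from $\phi_T(D(0,1))\subseteq D(0,1)$ alone, whereas your argument — $\phi_a$ is continuous with $\phi_a(0)=0$ and $\fj{\phi}_\mathrm{Berk}$ is $A$-invariant, so $\phi_a(\phi^0_\mathrm{Berk})$ is a connected subset of $\fj{\phi}_\mathrm{Berk}$ containing $0$ and hence lies in $\phi^0_\mathrm{Berk}$ — works verbatim for every $a\in A$, not just powers of $T$. Your closing observation that the formula forces $j_{\phi_T,v}$ to be independent of $T$ matches the remark the paper makes after the lemma.
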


\begin{proof}
Note that if $\alpha\psi=\phi\alpha$, with $\alpha\in\CC_v^*$, then $c(\psi)=c(\phi)+\log|\alpha|$ and $j_{\psi_T, v}=j_{\phi_T, v}$, and $\fj{\psi}_\mathrm{Berk}$ is the image of $\fj{\phi}_\mathrm{Berk}$ under a scaling by $\alpha^{-1}$.  In particular, the statement of the lemma is true for $\phi$ if and only if it is true for $\psi$ and so we may assume, without loss of generality, that $j_{\phi_T, v}=c(\phi)$.  In other words, we assume that every coefficient of $\phi_{T}$ is integral, and at least one is a unit.  Our aim, in this case, is to show that $\phi^0_\mathrm{Berk}=\mathcal{D}(0, 1)$.

As noted, we have $\phi^0_\mathrm{Berk}=\mathcal{D}(0, c)$, for some $c\geq 0$.  Our assumption on the coefficients of $\phi_{T}$ ensure that $\phi_{T}(\mathcal{D}(0, 1))\subseteq \mathcal{D}(0, 1)$, and so $\mathcal{D}(0, 1)\subseteq \fj{\phi}_\mathrm{Berk}$.  In other words, $c\geq 1$.   On the other hand, there is certainly some $\zeta\in\AA^1(\CC_v)\setminus \fj{\phi}(\CC_v)$, necessarily with $|\zeta|_v>1$.  We will show that for any $r>1$ there is some $x\in \mathcal{D}(0, r)$ such that $\zeta\in \Ocal_A(x)$.  It follows that $x\not\in \fj{\phi}_\mathrm{Berk}$, and consequently that $\mathcal{D}(0, r)\not\subseteq \fj{\phi}_\mathrm{Berk}$, for $r>1$.

Write
\[\phi_T(x)=Tx+a_1x+\cdots +a_{r\deg(T)}x^{q^r}.\]
Now, suppose that for some non-constant $B\in A$ we have $\phi_B(x)=Bx+\sum b_ix^{q^i}$, where each $b_i$ is integral, and at least one is a unit.  Suppose that $I$ is the largest index with $a_I$ a unit, and $J$ is the largest index with $b_j$ a unit.
Then
\[\phi_{BT}(x)=\phi_{T}(\phi_B(x))=\sum a_ib_j^{q^i} x^{q^{i+j}}.\]
The coefficient of $x^{q^{I+J}}$ is
\[\sum_{i+j=I+J}a_ib_j^{q^i}=a_Ib_J+\sum_{\substack{i>I}{i+j=I+J}}a_ib_j^{q^i}+\sum_{\substack{j>J}{i+j=I+J}}a_ib_j^{q^i},\]
which is a unit, since $|a_i|<1$ for $i>I$ and $|b_j|<1$ for $j>J$.  If $k>I+J$, then the coefficient of $x^{q^j}$ in $\phi_{BT}(x)$ is
$\sum_{i+j=k}a_ib_j^{q^i}$, where in every term either $i>I$ or $j>J$.  So all subsequent coefficients are non-units.  By induction, we see that the coefficient of $x^{q^{NI}}$ in $\phi_T^N(x)=\phi_{T^N}(x)$ is a unit (and, further, that this is the largest index corresponding to a unit coefficient in $\phi_T^N$).  In particular, the Newton polygon of the polynomial $\phi_T^N(x)-\zeta$ contains a line segment of slope at most $-v(\zeta)/(q^{NI}-1)$, and hence $\phi_T^N(x)-\zeta$ has a root $\beta\in\CC_v$ satisfyingt $|\beta|=|\zeta|^{1/(q^{NI}-1)}$.  Choosing $N$ large enough, we can ensure that $|\beta|\leq r$, and so $\mathcal{D}(0, r)\not\subseteq \fj{\phi}_\mathrm{Berk}$.  Since $r>1$ was arbitrary, we have shown that $\phi^0_\mathrm{Berk}=\mathcal{D}(0, 1)$.

It remains to show that $\phi^0(\CC_v)$ is a submodule of $\phi(\CC_v)$.  It is clear that $\phi^0(\CC_v)=D(0, 1)$ is an additive subgroup of $\CC_v$, and the observation $\phi_T(D(0, 1))\subseteq D(0, 1)$ made above suffices to show that it is also closed under the action of $A$.
\end{proof}

At this point we pause to note that, if $\Ocal_v$ is the ring of integers of $\CC_v$, with maximal ideal $\mathfrak{m}$, and $\phi$ is a Drinfeld module defined over $\Ocal_v$, with stable reduction (assuming, still, that $|T|_v\leq 1$), then we have $j_{v, \phi}=c(\phi)$.  In this case, $\phi^0(\CC_v)=\Ocal_v$, and the induced Drinfeld module $\tilde{\phi}$ over the residue field $\Ocal_v/\mathfrak{m}$ is precisely the image of $\phi^0(\CC_v)$, that is, we have $\tilde{\phi}(\Ocal/\mathfrak{m})\cong \phi^0(\CC_v)/\phi^1(\CC_v)$, where $\phi^1(\CC_v)=\{x\in \CC_v:|x|_v<1\}$, a the maximal proper submodule of $\phi^0(\CC_v)$.  It is also the case that the structure of $\phi^1(\CC_v)$ is closely related to that of a certain formal Drinfeld module \cite{rosen}, much as in the elliptic case.

This is closely analogous to the case of an elliptic curve $E/\QQ_p$, where \[E^0(\QQ_p)/E^1(\QQ_p)\cong \tilde{E}(\FF_p),\]
and it is in part this relation which motivates the study of $\phi^0(\CC_v)$.

\begin{lemma}
If $v$ is an infinite place, then $\phi^0_\mathrm{Berk}=\{0\}$.
\end{lemma}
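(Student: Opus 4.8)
The plan is to use the preceding lemma, which identifies $\phi^0_\mathrm{Berk}$ with the largest Berkovich disk about $0$ contained in $\fj{\phi}_\mathrm{Berk}$, together with the crucial feature of an infinite place: every non-constant $T\in A$ has a pole at $\infty$, so $|T|_v=|T|_\infty>1$. Thus $\phi_T$ \emph{strictly expands} Berkovich disks centred at $0$, and no nondegenerate such disk can be forward-invariant under $\phi_T$.

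Concretely, I would fix a non-constant $T\in A$ and write $\phi_T(x)=Tx+a_1x^q+\cdots+\Delta x^{q^{r\deg(T)}}$. For each $\rho>0$, the image of the Berkovich disk $\mathcal{D}(0,\rho)$ under the polynomial map $\phi_T$ is again a Berkovich disk, centred at $\phi_T(0)=0$ and of radius $g(\rho):=\max\{|T|_v\rho,\,|a_1|_v\rho^q,\,\ldots,\,|\Delta|_v\rho^{q^{r\deg(T)}}\}$, by the standard description of the image of a disk under a polynomial map (cf.\ \cite{baker-rumely}). Since $|T|_v\rho$ is one of the terms in this maximum and $|T|_v>1$, we get $g(\rho)\ge|T|_v\rho>\rho$.

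Now write $\phi^0_\mathrm{Berk}=\mathcal{D}(0,c)$ with $c\ge0$, as the previous lemma permits, and suppose toward a contradiction that $c>0$. Because $\fj{\phi}_\mathrm{Berk}$ is the Berkovich filled Julia set of $\phi_T$ (by the lemma to that effect), it is forward-invariant under $\phi_T$; hence $\phi_T(\mathcal{D}(0,c))=\mathcal{D}(0,g(c))\subseteq\fj{\phi}_\mathrm{Berk}$. But $\mathcal{D}(0,g(c))$ is a Berkovich disk about $0$ contained in $\fj{\phi}_\mathrm{Berk}$, so maximality of $\phi^0_\mathrm{Berk}$ forces $\mathcal{D}(0,g(c))\subseteq\mathcal{D}(0,c)$, i.e.\ $g(c)\le c$, contradicting $g(c)>c$. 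Hence $c=0$ and $\phi^0_\mathrm{Berk}=\{0\}$. (Alternatively, one may iterate to obtain $\mathcal{D}(0,g^n(c))\subseteq\fj{\phi}_\mathrm{Berk}$ for all $n$, with $g^n(c)\ge|T|_v^n\,c\to\infty$, contradicting the boundedness of $\fj{\phi}_\mathrm{Berk}$ from Lemma~\ref{lem:juliadisk}.)

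I do not expect a genuine obstacle here: the two points requiring care are recalling that it is precisely the hypothesis ``$v$ infinite'' that yields $|T|_v>1$ for non-constant $T\in A$ (at a finite place one can have $|T|_v\le1$, and indeed $\phi^0_\mathrm{Berk}$ need not be degenerate, by Lemma~\ref{lem:finitephi0}), and invoking the standard fact that $\phi_T$ carries $\mathcal{D}(0,\rho)$ onto $\mathcal{D}(0,g(\rho))$.
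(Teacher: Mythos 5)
Your argument is correct, but it runs in the opposite direction from the paper's. You argue forward: writing $\phi^0_\mathrm{Berk}=\mathcal{D}(0,c)$ (as the unlabelled lemma on the disk structure permits), you use the standard fact that $\phi_T$ maps $\mathcal{D}(0,\rho)$ onto the disk of radius $\max\{|T|_v\rho, |a_1|_v\rho^q,\dots,|\Delta|_v\rho^{q^{r\deg(T)}}\}\geq |T|_v\rho>\rho$, and then either the maximality of $\phi^0_\mathrm{Berk}$ among disks about $0$ inside $\fj{\phi}_\mathrm{Berk}$, or iteration plus the boundedness of $\fj{\phi}_\mathrm{Berk}$ from Lemma~\ref{lem:juliadisk}, forces $c=0$. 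The paper instead argues backward: after normalizing so that $j_{\phi,v}=c(\phi)$ (so all non-linear coefficients of $\phi_T$ are integral), it fixes a point $\zeta\notin\fj{\phi}(\CC_v)$ and reads off from the Newton polygon of $\phi_T^N(x)-\zeta$ a root $\beta$ with $|\beta|=|\zeta/T^N|^{1/(q-1)}$, which lies outside $\fj{\phi}_\mathrm{Berk}$ and tends to $0$ as $N\to\infty$ since $|T|_v>1$; hence the radius of $\phi^0_\mathrm{Berk}$ must vanish. Both proofs turn on the same arithmetic input, namely that every non-constant $T\in A$ satisfies $|T|_v>1$ at an infinite place (exactly the point you flag, and the reason the finite-place picture of Lemma~\ref{lem:finitephi0} is different). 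What your route buys is that it needs no WLOG normalization and no choice of an external point $\zeta$ -- the expansion estimate $g(\rho)\geq|T|_v\rho$ is coefficient-free; what the paper's route buys is uniformity of method with its proof of Lemma~\ref{lem:finitephi0}, where the same pull-back-via-Newton-polygon device is used to pin down $\phi^0_\mathrm{Berk}$ at finite places. Your parenthetical variant (unbounded images contradicting Lemma~\ref{lem:juliadisk}) is the most robust phrasing, since it does not even rely on the ``largest disk'' characterisation.
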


\begin{proof}
As before, we assume without loss of generality that $j_{\phi, v}=c(\phi)$, and let $\phi^0_\mathrm{Berk}=\mathcal{D}(0, ñr)$ for some $r\geq 0$.  Note that, for any fixed $T\in A\setminus\FF_q$, the coefficients of all but the linear term of $\phi_T(x)$ are integral.  If we select any $\zeta\not\in\fj{\phi}(\CC_v)$, we see that the initial line segment in the Newton polygon of $\phi_T^N(x)-\zeta$ has slope $(Nv(T)-v(\zeta))/(q-1)$, as soon as $N$ is large enough, since $v(\zeta)$ is fixed, and $v(T)<0$.  It follows that $\phi_T^N(x)-\zeta$, again if $N$ is large enough, has a root $\beta\in\CC_v$ satisfying
\[|\beta|=|\zeta/T^N|^{1/(q-1)}.\]
Note that, as in the previous lemma, $\beta\not\in \fj{\phi}_\mathrm{Berk}$, and so $r< |\zeta/T^N|^{1/(q-1)}$.  But $|\zeta|$ is fixed, and $|T|>1$, so we obtain a contradiction for large $N$ if $r>0$.  It must be the case that $r=0$.
\end{proof}

We noted that Lemma~\ref{lem:finitephi0} gave us the convenient corollary that $j_{\phi, v}=j_{\phi_T, v}$ for any non-constant $T\in A$, in the case where $v$ is a finite place.  This is not true for infinite places, as we can see from the Carlitz module.  If $\phi_T(x)=Tx+x^q$, and $v$ is a place with $|T|>1$, then $j_{\phi_T, v}=0$, while $j_{\phi_{T^2}, v}=\frac{q}{q-1}\log|T|>0$.  In fact, it is not hard to show that in this case $j_{\phi_T^N, v}\to\infty$ as $N\to\infty$.

\begin{remark}
We defined $\fj{\phi}(\CC_v)$ to be the set of $x\in\phi(\CC_v)$ with bounded $A$-orbit under $\phi$, and similarly for subfields.  It is perhaps more natural, however, to describe $\fj{\phi}(\CC_v)$ as simply the maximal bounded submodule of $\phi(\CC_v)$, as we did in the introduction.  It is clear that $\fj{\phi}(\CC_v)$ is a bounded submodule, but if $x\in \phi(\CC_v)\setminus\fj{\phi}(\CC_v)$, then \emph{no} submodule containing $x$ is bounded, since the submodule generated by $x$ is not bounded.  So every bounded submodule of $\phi(\CC_v)$ is contained in $\fj{\phi}(\CC_v)$.
\end{remark}


\section{Local heights for the filled Julia set}
\label{sec:localheights}

Here we set down some basic results about local heights on the filled Julia set.  Some of the results in this section are special cases of results in \cite{jpaper}.

\begin{definition}
Let  $\phi/\CC_v$ be a Drinfeld module with filled Julia set $\fj{\phi}(\CC_v)$.  Define the \emph{local height} associated to $\fj{\phi}$ by
\[\lambda_{\phi}(x)=\log|x^{-1}|+c_v(\phi).\]
\end{definition}

\begin{remark}
Note that this agrees with the definition of the local height associated to a Drinfeld module in \cite{jpaper}, since the Greens' function vanishes identically on $\fj{\phi}(\CC_v)$.
\end{remark}

\begin{lemma}\label{lem:lambdaprops}
Let $\lambda_\phi$ be the local height associated to $\fj{\phi}/\CC_v$.
\begin{enumerate}
\item The map $\lambda_\phi:\fj{\phi}(\CC_v)\to \RR$ is continuous, except for a simple pole at the origin.
\item\label{item:invariance} If $\psi\alpha=\alpha\phi$, for some $\alpha\in\CC_v^*$, then \[\lambda_\phi(x)=\lambda_\psi(\alpha x).\]
\item If $\phi$ has good reduction, then
\[\lambda_\phi(x)=\log|x^{-1}|.\]
\item If $v$ is a finite place, then
\[\lambda_\phi(x)\geq -\left(\frac{1-q^{-(r-1)}}{q-1}\right)j_{\phi, v},\]
for all $x\in \fj{\phi}(\CC_v)$.
\item  If $v$ is a finite place, then
\[\lambda_\phi(x)\geq j_{\phi, v},\]
For all $x\in \phi^0(\CC_v)$.
\item The function $\lambda_{\phi}$ is constant on every non-trivial coset $x+\phi^0(\CC_v)$.  In particular, there is a function $\mathbb{B}:\fj{\phi}(\CC_v)/\phi^0(\CC_v)\to \RR$ and a non-negative function $E:\fj{\phi}(\CC_v)\to \RR$ such that $E$ vanishes except on $\phi^0(\CC_v)$ and
\[\lambda_{\phi}(x)=E(x)+\mathbb{B}(x+\phi^0(\CC_v))\]
for all $x\in \fj{\phi}(\CC_v)$.
\end{enumerate}
\end{lemma}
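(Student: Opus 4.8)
The plan is to read each assertion off the explicit formula $\lambda_\phi(x)=\log|x^{-1}|+c_v(\phi)$ together with the structural results of Section~\ref{sec:localfields}. Item~(i) is immediate: $x\mapsto\log|x|$ is continuous on $\CC_v^\ast$, and $\log|x^{-1}|\to\infty$ as $x\to 0$, which is the asserted simple pole. For item~(ii), if $\psi_a(\alpha x)=\alpha\phi_a(x)$ then comparing the leading coefficients of $\phi_T$ and $\psi_T$ for a nonconstant $T$ gives $c_v(\psi)=c_v(\phi)+\log|\alpha|$, and substituting this into the definition makes the $\log|\alpha|$ contributions cancel, yielding $\lambda_\psi(\alpha x)=\lambda_\phi(x)$. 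For item~(iii), good reduction forces the leading coefficient of $\phi_T$ to be a unit, so $c_v(\phi)=c_v(\phi_T)=0$ and $\lambda_\phi(x)=\log|x^{-1}|$.

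Item~(v) I would get directly from Lemma~\ref{lem:finitephi0}: at a finite place $\phi^0_{\mathrm{Berk}}=\mathcal D(0,q^{-j_{\phi_T,v}+c(\phi)})$, and since $j_{\phi,v}=j_{\phi_T,v}$ there, any $x\in\phi^0(\CC_v)$ satisfies $\log|x|\le -j_{\phi,v}+c_v(\phi)$, hence $\lambda_\phi(x)=\log|x^{-1}|+c_v(\phi)\ge j_{\phi,v}$. Item~(vi) is then essentially formal. By Lemma~\ref{lem:finitephi0} (and the corresponding statement at infinite places) $\phi^0(\CC_v)$ is a disk $D(0,\rho)$, so if $x\notin\phi^0(\CC_v)$ and $y\in\phi^0(\CC_v)$ then $|x|>\rho\ge|y|$, whence $|x+y|=|x|$ and $\lambda_\phi(x+y)=\lambda_\phi(x)$; thus $\lambda_\phi$ factors through the nontrivial cosets. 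Setting $\mathbb{B}$ equal to this common value on each nontrivial coset, $\mathbb{B}(0+\phi^0(\CC_v))=j_{\phi,v}$, and $E=\lambda_\phi-\mathbb{B}\circ\pi$, item~(v) gives $E\ge 0$ on $\phi^0(\CC_v)$, while $E\equiv 0$ off $\phi^0(\CC_v)$ by construction.

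The substance is item~(iv), which amounts to an upper bound on $|x|$ for $x\in\fj\phi(\CC_v)$. Since both sides are invariant under $\CC_v$-isomorphism — using item~(ii) on the left and the fact that $j_{\phi,v}$ is an isomorphism invariant on the right — I would first scale $\phi$ minimally so that all coefficients of $\phi_T$ become $v$-integral, which forces $j_{\phi,v}=c_v(\phi)=:\gamma$; if $\gamma=0$ the bound is trivial, so assume $\gamma>0$, in which case a unit occurs among the coefficients below the top one. I would also replace $T$ by a generator with $v(T)=0$, which for $A=\FF_q[T]$ may be taken of degree one. Then Lemma~\ref{lem:juliadisk} gives $\fj\phi(\CC_v)\subseteq D(0,B_T)$ with $\log B_T=\max\{\log|\xi|:\xi\in\phi[T]\}$, and this maximum is the largest slope of the Newton polygon of $\phi_T(x)/x$. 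That polynomial has $v$-integral coefficients, a unit among those below the top one, and rightmost vertex $(q^{r}-1,(q^{r}-1)\gamma)$; hence its steepest (last) segment has left endpoint at an exponent $\le q^{r-1}-1$ and nonnegative height, so slope at most $\tfrac{(q^{r}-1)\gamma}{q^{r-1}(q-1)}$. Combining, $\log|x|\le\tfrac{(q^{r}-1)\gamma}{q^{r-1}(q-1)}$ for $x\in\fj\phi(\CC_v)$, so $\lambda_\phi(x)=\log|x^{-1}|+\gamma\ge\gamma-\tfrac{(q^{r}-1)\gamma}{q^{r-1}(q-1)}=-\bigl(\tfrac{1-q^{-(r-1)}}{q-1}\bigr)j_{\phi,v}$. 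I expect the delicate part to be the bookkeeping justifying the normalization — it is the corollary $j_{\phi,v}=j_{\phi_T,v}$ of Lemma~\ref{lem:finitephi0} that makes it legitimate to pass to a single, conveniently chosen $T$ — and the observation that the choice $\deg T=1$ is exactly what produces the exponent $q^{-(r-1)}$; for a general base ring one runs the same argument with $T$ of minimal degree prime to $v$, obtaining the analogue with $r$ replaced by $r\deg T$.
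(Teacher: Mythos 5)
Your proof is correct and, for the parts the paper actually writes out, it takes the same route: items (i)--(iii), (v) and (vi) are read off the formula $\lambda_\phi(x)=\log|x^{-1}|+c_v(\phi)$ together with Lemma~\ref{lem:finitephi0}, exactly as in the paper (your sign $c_v(\psi)=c_v(\phi)+\log|\alpha|$ in (ii) is the right one, and makes the cancellation work, whereas the paper's displayed $c_v(\psi)=c_v(\phi)-\log|\alpha|_v$ is a typo), and your constancy-on-cosets argument for (vi) via $|x+y|=|x|$ for $x\notin\phi^0(\CC_v)$, $y\in\phi^0(\CC_v)$ is the paper's ``constant on non-trivial components'' observation. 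The real content is (iv): the paper's own proof of that item breaks off mid-sentence after ``examining the Newton polygon we obtain a bound on the \dots'', and your computation supplies exactly the missing step. After normalizing so that $j_{\phi,v}=c_v(\phi)=\gamma$ (legitimate by (ii) together with $j_{\phi,v}=j_{\phi_T,v}$ at finite places), the last segment of the Newton polygon of $\phi_T(x)/x$ has left vertex at exponent at most $q^{rd-1}-1$ ($d=\deg T$) and nonnegative height, and right vertex $(q^{rd}-1,(q^{rd}-1)\gamma)$, so $\log|x|\leq (q^{rd}-1)\gamma/\bigl(q^{rd-1}(q-1)\bigr)$ on $\fj{\phi}(\CC_v)$, which gives the stated constant precisely when $d=1$. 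I checked the arithmetic; it is right.

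The one caveat is generality. The lemma is stated in a section where $A$ is arbitrary, and a nonconstant $T$ of degree one prime to $v$ exists only for rings such as $\FF_q[T]$; as you yourself note, for general $A$ your method yields the exponent $rd-1$ in place of $r-1$, which is a strictly weaker lower bound. So as a proof of the statement as printed, your argument covers the $\FF_q[T]$-type case; this is no worse than the paper's truncated proof, and the downstream uses survive (Section~\ref{sec:uniformization} assumes $A=\FF_q[T]$, and the proof of Theorem~\ref{th:szpiroresult} only needs the resulting coefficient to stay positive, with $q^{-r}$ replaced by $q^{-rd}$), but it is worth saying explicitly. A small simplification: you do not need $v(T)=0$, hence not Lemma~\ref{lem:juliadisk} verbatim. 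If $\log|x|$ exceeds both $\max\{\log|\xi|:\xi\in\phi[T]\}$ and $c_v(\phi)$, then $\log|\phi_{T^n}(x)|\to\infty$, so every $x\in\fj{\phi}(\CC_v)$ satisfies $\log|x|\leq\max\{c_v(\phi),\max_\xi\log|\xi|\}$, and after your normalization both quantities are at most $(q^{rd}-1)\gamma/\bigl(q^{rd-1}(q-1)\bigr)$; this removes the need to choose $T$ prime to $v$, though not the dependence on $\deg T$.
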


\begin{remark}
Compare with Theorem~4.2 of \cite[p.~473]{ataec} for local heights relative to elliptic curves over $p$-adic fields.  In particular, the role of the second Bernoulli polynomial there is played by our function $\mathbb{B}$ here, which will be given a more explicit description below.
\end{remark}

\begin{proof}[Proof of Lemma~\ref{lem:lambdaprops}]
\begin{enumerate}
\item This result is clear enough, but also follows from \cite[Proposition~8.66 p.~242]{baker-rumely}.
\item If $T\in A$ is non-constant, and \[\psi_T(x)=\alpha\phi_T(\alpha^{-1}z),\] we have
 $c_v(\psi)=c_v(\phi)-\log|\alpha|_v$, so
\begin{eqnarray*}
\lambda_{\phi}(x)&=&\log|x^{-1}|+c_v(\phi)\\
&=&\log|x^{-1}|+c_v(\psi)-\log|\alpha|\\
&=&\lambda_{\psi}(\alpha z).
\end{eqnarray*}
So the local height is coordinate invariant.

\item Note that if $\phi/\CC_v$ has good reduction, then the coefficients of $\phi_T$ are integral, and the leading coefficient is a unit.  It follows that $c_v(\phi)=0$. \item By~(\ref{item:invariance}), and the invariance of $j_{\phi, v}$ under isomorphism, it suffices to prove the estimate in the case $j_{\phi, v}=c(\phi)$, that is, the case where the coefficients of $\phi_T(x)$ are integral, and one of them is a unit.  Now, we know that $|x|\leq \max\{|\xi|:\xi\in\phi[T]\}$, and examining the Newton polygon we obtain a bound on the ....
\item If $x\in \phi^0(\CC_v)$ then, by Lemma~\ref{lem:finitephi0} we have
\[\lambda_\phi(x)=-\log|x|+c(\phi)\geq -\left(-j_{\phi, v}+c(\phi)\right)+c(\phi).\]

\item  The statement is trivial for $v$ an infinite place, since $\phi^0(\CC_v)=\{0\}$; we may take $\mathbb{B}(x+\phi^0(\CC_v))=\lambda_\phi(x)$ for all $x$.  So suppose that $v$ is a finite place. We have $G_\phi$ vanishing identically on $\fj{\phi}(\CC_v)$, and so $\lambda_\phi(x)=-\log|x|+c(\phi)$ here.  In particular, $\lambda_{\phi}(x)$ is a function of $|x|_v$.  But $|x|_v$ is constant on every non-trivial component of $\fj{\phi}(\CC_v)$, since every such component has the form $\zeta+D(0, q^{-J_v(\phi)})$ for some $\zeta\not\in D(0, q^{-J_v(\phi)})$.  So if we set
\[\mathbb{B}(x+\phi^0(\CC_v))=\min_{y\in x+\phi^0(\CC_v)}\lambda_{\phi}(y),\]
we see that $\lambda_{\phi}(x)=\mathbb{B}(x+\phi^0(\CC_v))$ for every $x\in \fj{\phi}(\CC_v)\setminus\phi^0(\CC_v)$.  It is automatic that $E(x)=\lambda_\phi(x)-\mathbb{B}(x+\phi^0(\CC_v))$ is non-negative, and constant on every non-trivial component of $\fj{\phi}(\CC_v)$.  
\end{enumerate}
\end{proof}

The following definition and lemmas are motivated by the arguments of Ghioca~\cite{ghioca1, ghioca2}.
\begin{definition}
If $\phi/L$ is a Drinfeld module, and $T\in A$ is non-constant, then we say that $x\in L$ is \emph{$T$-generic} if
\[|\phi_T(x)|=\max_{0\leq i\leq r\deg_\infty(T)}|a_ix^{q^i}|,\]
where $a_0=T$ as usual.  
\end{definition}

The next lemma, which says essentially that the typical element of $\fj{\phi}(\CC_v)$ is at most the size of the average torsion, is a modification of a result in \cite{jpaper}.

\begin{lemma}\label{lem:genericbound}
If $x\in \fj{\phi}(\CC_v)$ is $T$-generic
then
\[\log|x|\leq c(\phi)+\frac{1}{(q^{r\deg(T)}-1)^2}\log^+|T^{-1}|.\]
\end{lemma}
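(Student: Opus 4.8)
The plan is to reduce to the case where $\phi$ is normalized so that $c(\phi)$ equals the $j$-invariant term, and then extract the bound from the Newton polygon of $\phi_T$. By Lemma~\ref{lem:lambdaprops}(\ref{item:invariance}) (or rather the scaling behaviour of $c(\phi)$, $j_{\phi,v}$, and $\fj{\phi}_\mathrm{Berk}$ already recorded in the proof of Lemma~\ref{lem:finitephi0}), both sides of the claimed inequality transform consistently under $\alpha$-conjugation, so we may assume $j_{\phi_T, v} = c(\phi)$; equivalently, all coefficients $a_i$ of $\phi_T(x) = Tx + a_1 x^q + \cdots + a_{r\deg(T)} x^{q^{r\deg(T)}}$ are integral (have $|a_i|_v \le 1$) and at least one is a unit, and in this case $c(\phi)$ equals the average slope $\frac{1}{q^{r\deg(T)}-1}\log|T/\Delta|$, where $\Delta = a_{r\deg(T)}$ is the leading term. (We also use the corollary, already noted, that $j_{\phi,v} = j_{\phi_T,v}$ at a finite place, so it does not matter which $T$ we pick; at an infinite place the statement is for whatever $T$ is fixed, and the same normalization argument applies.)

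Now suppose $x \in \fj{\phi}(\CC_v)$ is $T$-generic, so $|\phi_T(x)| = \max_{0 \le i \le r\deg(T)} |a_i x^{q^i}|$ with $a_0 = T$. The key observation is that because $x$ lies in the filled Julia set, iterating $\phi_T$ keeps the orbit bounded, and combining this with $T$-genericity forces the maximum above to be achieved by a term other than the top one — otherwise $|\phi_T(x)| = |\Delta x^{q^{r\deg(T)}}|$ and, if $|x|$ is large, the orbit escapes just as in the proof of Lemma~\ref{lem:juliadisk}. More precisely, for $x \in \fj{\phi}(\CC_v)$ we have $\log|x| \le \log B_T = \max\{\log|\xi| : \xi \in \phi[T]\} + \frac{1}{q^{r\deg(T)}-1}\log^+|T^{-1}|$, and under our normalization the first term is at most the top slope of the Newton polygon of $\phi_T$. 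Writing $s$ for this maximal slope and $s'$ for the slope of the segment immediately below, one shows that $T$-genericity of a Julia point forces $\log|x| \le -s'$ (the point cannot sit above the steepest segment if its orbit is to remain bounded), and then one bounds $s' - s$, hence $-s'$, by the average-slope estimate: the difference between consecutive slopes in a Newton polygon of horizontal length $q^{r\deg(T)}-1$ is controlled by $\frac{1}{q^{r\deg(T)}-1}$ times the total drop $\log^+|T^{-1}|$, and combining with $c(\phi) = $ average slope yields $\log|x| \le c(\phi) + \frac{1}{(q^{r\deg(T)}-1)^2}\log^+|T^{-1}|$.

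The main obstacle is the middle step: making precise the claim that a $T$-generic point of $\fj{\phi}(\CC_v)$ cannot have $|x|$ as large as $q^{-s}$ (the radius governed by the \emph{top} slope), but only as large as $q^{-s'}$ governed by the \emph{second-to-top} slope. The clean way is to observe that if $|a_{r\deg(T)}|\,|x|^{q^{r\deg(T)}} \ge |a_i|\,|x|^{q^i}$ for all $i$ — which is what makes the top term dominate — then, exactly as in Lemma~\ref{lem:juliadisk}, $|\phi_T(x)| = |\Delta x^{q^{r\deg(T)}}|$ and one gets $q^{-N r\deg(T)}\log|\phi_{T^N}(x)| \to \log|x| + c(\phi)$, which must be $\le 0$; feeding this back shows the top term can dominate only when $\log|x| \le -c(\phi) - (\text{something} \ge 0)$, a bound much stronger than claimed, so that case is harmless. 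The remaining case is that some lower term ties the maximum, i.e.\ $|x|$ equals the reciprocal absolute value of a root lying on a non-terminal segment of the Newton polygon; there $\log|x| \le -s'$, and the routine Newton-polygon bookkeeping on slope differences finishes the argument.
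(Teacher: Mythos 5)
Your central step fails. The paper itself does not reprove this lemma (its proof is the one-line citation of Lemma~4.6 of \cite{jpaper} together with Lemma~\ref{lem:juliadisk}), so you are attempting a self-contained replacement, and the key claim of your middle paragraph --- that $T$-genericity of a point of $\fj{\phi}(\CC_v)$ forces $|x|$ to be at most the size governed by the \emph{second-to-top} segment of the Newton polygon of $\phi_T$ --- is not just unproved but false. Take $r=2$, $\phi_T(x)=Tx+x^q+\Delta x^{q^2}$ at a finite place with $|T|_v=1$ and $|\Delta|_v<1$, and write $V=\log|\Delta^{-1}|>0$. The nonzero $T$-torsion consists of $q-1$ points of size $1$ and $q^2-q$ points of size $\rho$ with $\log\rho=V/(q^2-q)$; fix $\xi$ with $|\xi|=\rho$ and let $y$ be a smallest root of $\phi_T(y)=\xi$, so that $\log|y|=\log\rho/q=V/(q^2(q-1))>0$. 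Then $y\in\fj{\phi}(\CC_v)$ (its orbit is $y\mapsto\xi\mapsto 0$), and $y$ is $T$-generic, since $|Ty|<|y|^q$ and $|\Delta||y|^{q^2}<|y|^q$, while $|\phi_T(y)|=|\xi|=\rho=|y|^q$. But the second-to-top segment corresponds to size $1$, so your intermediate bound would force $|y|\le 1$, which fails; only the bound of the lemma, $\log|y|\le c(\phi)=V/(q^2-1)$, survives, and here the binding constraint comes from the genericity inequality at the unit coefficient of $x^q$, not from the top term or from any single segment. So the dichotomy ``top term dominates / $x$ sits on a lower segment'' cannot yield the statement.

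Even if that claim were repaired, the surrounding bookkeeping has independent problems. First, $c(\phi)=\frac{1}{q^{r\deg(T)}-1}\log|\Delta^{-1}|$ equals the average slope $\frac{1}{q^{r\deg(T)}-1}\log|T/\Delta|$ only when $|T|_v=1$; at infinite places and at places dividing $T$ the two differ by $\frac{\log|T|}{q^{r\deg(T)}-1}$, and your final combination uses the wrong quantity. Second, the assertion that consecutive slope differences of the Newton polygon are controlled by $\frac{1}{q^{r\deg(T)}-1}\log^+|T^{-1}|$ has no basis: the vertical extent of the polygon is governed by $\log|\Delta^{-1}|$, which is unbounded independently of $|T|$, so no estimate of the shape $\frac{1}{(q^{r\deg(T)}-1)^2}\log^+|T^{-1}|$ can emerge from slope differences alone. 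Third, in your first case the appeal to the computation of Lemma~\ref{lem:juliadisk} requires top-term domination at \emph{every} iterate, which does not follow from domination at $x$ alone (it is equivalent to $|x|$ being at least the largest root size, and persistence of that condition must be checked); when carried out correctly the limit is $\log|x|-c(\phi)\le 0$, i.e.\ $\log|x|\le c(\phi)$, not the expression with the sign you wrote. To match the paper you should either quote Lemma~4.6 of \cite{jpaper} as it does, or genuinely reprove that lemma, which requires exploiting the genericity inequality at all indices rather than segment-by-segment Newton-polygon comparisons.
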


\begin{proof}
This follows directly from Lemma~4.6 of \cite{jpaper} and Lemma~\ref{lem:juliadisk} above.
\end{proof}

\begin{lemma}
If $x, \phi_T(x)\in\fj{\phi}(\CC_v)$ are $T$-generic, then
\[\lambda_\phi(x)\geq (1-q^{-1})j_{\phi_T, v}-\frac{1}{q(q^{r\deg(T)-1})^2}\log^+|T^{-1}|.\]
\end{lemma}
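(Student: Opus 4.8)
The plan is to reduce to the case $j_{\phi,v} = c(\phi)$ using the isomorphism invariance already established in Lemma~\ref{lem:lambdaprops}\eqref{item:invariance} (and the fact that $j_{\phi_T,v}$ is an isomorphism invariant, by the remark following Lemma~\ref{lem:finitephi0}): if $\psi\alpha = \alpha\phi$ then $\lambda_\phi(x) = \lambda_\psi(\alpha x)$ and both sides of the claimed inequality transform the same way, so we may assume the coefficients of $\phi_T(x)$ are integral with at least one a unit, i.e. $j_{\phi_T,v} = c(\phi) = 0$. In that normalization the claimed bound reads $\lambda_\phi(x) = -\log|x| \geq -\tfrac{1}{q(q^{r\deg(T)}-1)^2}\log^+|T^{-1}|$, i.e. an \emph{upper} bound $\log|x| \leq \tfrac{1}{q(q^{r\deg(T)}-1)^2}\log^+|T^{-1}|$ for any $T$-generic $x$ with $\phi_T(x)$ also $T$-generic. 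So the whole statement is really an estimate on the size of such $x$.

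The key step is to exploit that \emph{both} $x$ and $\phi_T(x)$ are $T$-generic. Genericity of $\phi_T(x)$ lets me apply Lemma~\ref{lem:genericbound} to the point $\phi_T(x) \in \fj{\phi}(\CC_v)$, giving $\log|\phi_T(x)| \leq c(\phi) + \tfrac{1}{(q^{r\deg(T)}-1)^2}\log^+|T^{-1}|$. On the other hand, genericity of $x$ means $|\phi_T(x)| = \max_i |a_i x^{q^i}|$; in the normalized case where the top coefficient is (at worst) adjusted, I want to extract $|x|$ from this maximum. The cleanest route: since $x \in \fj{\phi}(\CC_v) = \fj{\phi_T}(\CC_v)$, Lemma~\ref{lem:juliadisk} (or directly the fixed-point/Newton-polygon analysis in Lemma~\ref{lem:finitephi0}) already gives $|x| \leq 1$ when $j_{\phi_T,v}=c(\phi)=0$, so all the powers $|a_i x^{q^i}|$ with $i < q^{r\deg(T)}$ are $\leq |x|$, while the generic maximum is achieved, forcing $|\phi_T(x)| \geq |a_I x^{q^I}|$ for the largest index $I$ with $|a_I|=1$. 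Combining, $q^{I}\log|x| \geq \log|\phi_T(x)| \cdot(\text{sign bookkeeping})$ — more precisely one gets $\log|x| \leq \tfrac{1}{q^I}\log|\phi_T(x)| \leq \tfrac{1}{q^I(q^{r\deg(T)}-1)^2}\log^+|T^{-1}|$, and since $I \geq 1$ (some coefficient is a unit and the minimal such index is at least $1$) we have $q^I \geq q$, yielding the claimed bound. Unwinding the normalization reintroduces the $(1-q^{-1})j_{\phi_T,v} = (q-1)q^{-1}\,c(\phi_{\mathrm{new}})$ term via the transformation law $c_v(\psi) = c_v(\phi) - \log|\alpha|$ together with $j_{\phi_T,v} = j_{\psi_T,v}$.

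The main obstacle I anticipate is the bookkeeping in the last paragraph: tracking exactly which index $I$ realizes the generic maximum for $x$ versus for $\phi_T(x)$, and making sure the exponent that appears is $q$ (not $q^I$ for some potentially larger $I$, which would only help, nor something smaller, which would break the stated constant). In particular I need the lower bound $I \geq 1$, which comes from the fact that $a_0 = T$ is a unit only when $|T|=1$, so in the relevant regime $\log^+|T^{-1}| > 0$ it is some higher coefficient $a_I$, $I\geq 1$, that is the unit — this is precisely where the factor $q^{-1}$ in front of $\log^+|T^{-1}|$ (as opposed to no such factor in Lemma~\ref{lem:genericbound}) is gained. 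A secondary point requiring care is verifying that $\phi_T(x)$ lies in $\fj{\phi}(\CC_v)$, which is immediate since $\fj{\phi}(\CC_v)$ is $A$-stable, so Lemma~\ref{lem:genericbound} genuinely applies to it.
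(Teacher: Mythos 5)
Your overall skeleton (apply Lemma~\ref{lem:genericbound} to $\phi_T(x)$, then use $T$-genericity of $x$ to pass from $|\phi_T(x)|$ down to $|x|$, gaining a factor $q^{-1}$ because the relevant coefficient index is at least $1$) is the right one, but the normalization step contains a genuine error that erases the main term of the inequality. By replacing $\phi$ with an isomorphic $\psi=\alpha^{-1}\phi\alpha$ you can arrange that the coefficients of $\psi_T$ are integral with at least one a unit, i.e.\ that $j_{\psi_T,v}=c_v(\psi)$; you cannot in addition arrange $c_v(\psi)=0$, since that forces the \emph{leading} coefficient to be the unit, i.e.\ potentially good reduction. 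Moreover, since $\lambda_\phi(x)=\lambda_\psi(\alpha x)$ and $j_{\phi_T,v}=j_{\psi_T,v}$, both sides of the asserted inequality are isomorphism invariants, so nothing can be ``reintroduced by unwinding the normalization'': whatever inequality you prove in the normalized model is literally the inequality you end up with. As written, your normalized claim is $-\log|x|\geq -\tfrac{1}{q(q^{r\deg(T)}-1)^2}\log^+|T^{-1}|$, which omits the invariant term $(1-q^{-1})j_{\phi_T,v}$; in the only cases where the lemma has content ($j_{\phi_T,v}>0$, i.e.\ the unit coefficient is an intermediate one and $c_v(\psi)>0$) your argument never produces that term. (The auxiliary claim $|x|\leq 1$ is also not what Lemma~\ref{lem:juliadisk} gives when $|T|_v<1$, but it turns out not to be needed at all.)

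The repair is to keep $c(\phi)$ in play and work with the index attaining the maximum in the definition of $j_{\phi_T,v}$: choose $I$ with $1\leq I\leq r\deg(T)$ and $\log|a_I|=(q^I-1)\bigl(j_{\phi_T,v}-c(\phi)\bigr)$ (the maximum there runs over $j\geq 1$ by definition, which is the true source of $I\geq 1$, independently of $|T|_v$). Genericity of $x$ gives $\log|\phi_T(x)|\geq \log|a_I|+q^I\log|x|$, while Lemma~\ref{lem:genericbound} applied to the $T$-generic point $\phi_T(x)\in\fj{\phi}(\CC_v)$ gives $\log|\phi_T(x)|\leq c(\phi)+\tfrac{1}{(q^{r\deg(T)}-1)^2}\log^+|T^{-1}|$. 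Combining and solving for $\lambda_\phi(x)=-\log|x|+c(\phi)$ yields
\[
\lambda_\phi(x)\;\geq\;(1-q^{-I})\,j_{\phi_T,v}-\frac{1}{q^{I}(q^{r\deg(T)}-1)^2}\log^+|T^{-1}|,
\]
which implies the stated bound because $I\geq 1$ and $j_{\phi_T,v}\geq 0$. Note that the paper itself gives no argument here, citing Lemma~4.8 of \cite{jpaper} instead, so a corrected self-contained proof along your lines would be a genuine addition; but the bookkeeping above, not the $c(\phi)=0$ shortcut, is what carries the proof.
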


\begin{proof}
This follows from Lemma~4.8 in \cite{jpaper}.
\end{proof}

\begin{lemma}\label{lem:localchoose}
Suppose that $X\subseteq \fj{\phi}(L)$ is an additive subgroup, and that $T\in A$ is non-constant.
Then there is an additive subgroup $Y\subseteq X$ with $\#Y\geq q^{-4r^2\deg(T)^2} \# X$ such that for every non-zero $x\in Y$, we have
\[\lambda_{\phi}(x)\geq  (1-q^{-1})j_{\phi_T, v}-\frac{1}{q(q^{r\deg(T)-1})^2}\log^+|T^{-1}|.\]
\end{lemma}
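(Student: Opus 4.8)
The plan is to combine the preceding lemma with a counting argument in the style of Ghioca. That lemma shows that whenever both $x$ and $\phi_T(x)$ lie in $\fj\phi(\CC_v)$ and are $T$-generic, $\lambda_\phi(x)$ already satisfies exactly the asserted lower bound; and since $\fj\phi(L)$ is an $A$-submodule, $\phi_T(x)\in\fj\phi(L)\subseteq\fj\phi(\CC_v)$ automatically for $x\in X\subseteq\fj\phi(L)$. So it suffices to produce a subgroup $Y\subseteq X$ with $[X:Y]\leq q^{3r\deg(T)}$ (which is at most $q^{4r^2\deg(T)^2}$, since $3D\leq 4D^2$ for every $D=r\deg(T)\geq 1$) such that every nonzero $x\in Y$ is $T$-generic, satisfies $\phi_T(x)\neq 0$, and has $\phi_T(x)$ again $T$-generic.

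The key point is that failure of $T$-genericity is confined to finitely many spheres. Write $\phi_T(x)=\sum_{i=0}^{D}a_ix^{q^i}$ with $D=r\deg(T)$, $a_0=T$, $a_D\neq 0$, and let $0=j_0<j_1<\cdots<j_\ell=D$ index the vertices of the Newton polygon of $\phi_T$. The ultrametric inequality shows that $x$ can fail to be $T$-generic only when $|x|$ equals one of the $\ell\leq D$ radii $\rho_1,\dots,\rho_\ell$ corresponding to the edge slopes; at $|x|=\rho_k$ the dominant terms are those indexed within $[j_{k-1},j_k]$, and, after scaling by a $\pi$ with $|\pi|=\rho_k$ and reducing, $x$ is non-generic precisely when the image of $x/\pi$ in the residue field of $\CC_v$ lies in the zero set of the reduced leading additive polynomial $\overline{\sum_{i\in[j_{k-1},j_k]}a_iu^{q^i}}$. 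Substituting $w=u^{q^{j_{k-1}}}$ (a bijection of the residue field) turns this into a separable additive polynomial of degree $q^{j_k-j_{k-1}}$, so its zero set is an $\FF_q$-subspace $W_{\rho_k}$ of dimension $j_k-j_{k-1}$, and $\sum_k\dim W_{\rho_k}=j_\ell-j_0=D$. I would then record the elementary linear-algebra fact that for any finite-dimensional $\FF_q$-subspace $V\subseteq\fj\phi(L)$, filtering $V$ by the value of $|\cdot|$ gives successive quotients that inject into the residue field; choosing a splitting adapted to this filtration, and in the graded piece at each $\rho_k$ a complement to its intersection with $W_{\rho_k}$, yields a subspace of $V$ of codimension at most $\sum_k\dim W_{\rho_k}=D$ all of whose nonzero elements are $T$-generic. (At an infinite place the residue field is just $\FF_q$, but the stratification by $|x|$ is unaffected, so that case is not genuinely different.)

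Granting this tool, the proof is just two applications plus index bookkeeping. Apply it to $V=X$ to get $Y_1\subseteq X$ with $[X:Y_1]\leq q^D$ and every nonzero element $T$-generic. Apply it to $V=X':=\phi_T(X)\subseteq\fj\phi(L)$ to get $Y'\subseteq X'$ with $[X':Y']\leq q^D$ and every nonzero element $T$-generic, and set $Y_2=\phi_T^{-1}(Y')\cap X$; a dimension count gives $[X:Y_2]=[X':Y']\leq q^D$, and every $x\in Y_2$ with $\phi_T(x)\neq 0$ has $\phi_T(x)$ $T$-generic. Since $\ker(\phi_T)$ is the root space of a separable additive polynomial of degree $q^D$, the subgroup $\ker(\phi_T)\cap X$ has dimension at most $D$, so choose a complement $Y_3\subseteq X$ with $[X:Y_3]\leq q^D$. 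Then $Y=Y_1\cap Y_2\cap Y_3$ satisfies $[X:Y]\leq q^{3D}\leq q^{4r^2\deg(T)^2}$, and for every nonzero $x\in Y$ both $x$ and $\phi_T(x)\neq 0$ are $T$-generic, so the preceding lemma supplies the desired estimate on $\lambda_\phi(x)$.

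The main obstacle is the bookkeeping in the second paragraph: checking carefully that non-genericity is confined to the Newton-polygon edge radii, that the relevant root spaces $W_{\rho_k}$ are $\FF_q$-subspaces of the expected dimension (via the Frobenius substitution reducing the dominant part to a separable additive polynomial), and then carrying out the routine but slightly fussy construction of a single filtration-adapted splitting of $V$ that is simultaneously transverse to all the $W_{\rho_k}$. Everything downstream is elementary linear algebra over $\FF_q$.
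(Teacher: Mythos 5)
Your argument is correct, but it takes a genuinely different route from the paper's. The paper follows a Ghioca-style iterative pigeonhole: with $R=r\deg(T)$, it covers $X_i$ by the $q^{2R}$ sets $\{x : x\in Z_{\xi_1},\ \phi_T(x)\in Z_{\xi_2}\}$ indexed by pairs of $T$-torsion points, and whenever $(\xi_1,\xi_2)\neq(0,0)$ translates the largest piece so as to strictly decrease one of two counting functions $N_1,N_T\in\{0,\dots,R\}$; after at most $2R$ steps it lands on a \emph{subset} all of whose elements $x$ have $x$ and $\phi_T(x)$ $T$-generic, accumulating the loss $q^{-4R^2}$, and then upgrades the subset to a subgroup via $\lambda_\phi(x+y)\geq\min\{\lambda_\phi(x),\lambda_\phi(y)\}$. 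You instead describe the non-generic locus structurally: non-genericity forces $|x|$ to lie on one of the finitely many critical radii attached to the Newton polygon of $\phi_T$, and on each such sphere the non-generic elements reduce into the root space of an additive polynomial over the residue field; a filtration-adapted choice of complements then produces a subgroup directly, with the sharper index bound $q^{3R}\leq q^{4R^2}$, and the rest is the same appeal to the preceding lemma. Your route buys a much better constant (index exponential in $R$ rather than $R^2$) and makes the source of non-genericity transparent; the paper's route avoids any residue-field analysis, needs no splitting of the group, and works verbatim without finiteness assumptions on $X$ (your splitting argument as written needs $X$ finite, which is the case in the application to $\mathfrak{a}\phi^{\mathrm{Tors}}(L)$). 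A few small points to tidy: $X$ is a priori only an $\FF_p$-vector space, so choose the complements $\FF_p$-linearly (the index bounds are unaffected, since each $W_{\rho_k}$ has $q$-power order); the tie-set of dominant indices at a critical radius need not be the full interval $[j_{k-1},j_k]$, so one should only claim $\sum_k\dim_{\FF_q}W_{\rho_k}\leq r\deg(T)$, which suffices; the residue field of $\CC_v$ at an infinite place is $\overline{\FF}_q$ rather than $\FF_q$ (immaterial to the argument); and your kernel step is superfluous, since a nonzero $T$-generic $x$ automatically satisfies $\phi_T(x)\neq 0$ because $|\phi_T(x)|\geq|a_{r\deg(T)}x^{q^{r\deg(T)}}|>0$.
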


\begin{proof}
To begin, we note that it suffices to construct a sub\emph{set} $Y\subseteq X$ satisfying the criteria, since the inequality
\[\lambda_{\phi}(x+y)\geq\min\{\lambda_{\phi}(x), \lambda_{\phi}(y)\}\]
will ensure that the subgroup generated by $Y$ will also satisfy the criteria.  Second of all, by the previous lemma, we note that it suffices to find a subset $Y\subseteq X$ of the appropriate size, such that for all $x\in Y$, both $x$ and $\phi_{T}(x)$ are $T$-generic.  As usual, we will set $R=r\deg(T)$ for notational simplicity.

For any set $W\subseteq \fj{\phi}(L)$, we define $N_1(W)$ so that $q^{N_1(W)}$ is the number of elements $\xi\in\phi[T]$ such that $|\xi|\leq \max\{|x|:x\in W\}$.
Note that, by examining the Newton polygon of $\phi_T(x)$, we see that $N_1(W)$ is always an integer, and $0\leq N_1(W)\leq R$ as long as $W$ is non-empty.
Similarly, define
$N_T(W)=N_1(\phi_T(W))$, that is, $q^{N_T(W)}$ is the number of $\xi\in\phi[T]$ such that $|\xi|\leq \max\{|\phi_{T}(x)|:x\in W\}$.

For $\xi\in\phi[T]$, define
\[Z_\xi=\{y\in\phi(\CC_v):|y-\xi|<|\xi|=|y|\}\]
if $\xi\neq 0$, and
\[Z_0=\{y\in\phi(\CC_v):y\text{ is generic}\}.\]
Note that $\phi(\CC_v)=\bigcup_{\xi\in\phi[T]}Z_\xi$.
Let $X_1=X$, and suppose that for $i\geq 1$ we have constructed a set $X_i$.  For each $(\xi_1, \xi_2)\in \phi[T]\times\phi[T]$, let
\[X_{i, \xi_1, \xi_2}=\left\{x\in X_i:x\in Z_{\xi_1}\text{ and }\phi_{T}(x)\in Z_{\xi_2}\right\}.\]
Clearly the sets $X_{i, \xi_1, \xi_2}$ cover $X_i$, and there are $(q^R)^2$ of them, so there is some pair $(\xi_1, \xi_2)$ such that $\# X_{i, \xi_1, \xi_2}\geq q^{-2R}\# X_i$.

We consider several cases.

\noindent\textbf{Case 1:} $\xi_1\neq 0$.

Choose some $x\in X_{i, \xi_1, \xi_2}$, and let $X_{i+1}=X_{i, \xi_1, \xi_2}-x$, so that $\# X_{i+1}=\# X_{i, \xi_1, \xi_2}$.  Since
\[\max\{|\phi_{T}(x)-\phi_{T}(y)|:y\in X_{i, \xi_1, \xi_2}\}\leq\max\{|\phi_{T}(y)|:y\in X_i\},\]
we see that $N_T(X_{i+1})\leq N_T( X_{i})$.  On the other hand,
\[|x-y|=|y-\xi_1+x-\xi_1|<|\xi_1|=|y|=|x|\]
for all $x, y\in X_{i, \xi_1, \xi_2}$, by definition.  It follows that
\[N_1(X_{i+1})<N_1(X_{i, \xi_1, \xi_2})\leq N_1(X_i).\]
In particular, since $N_1(W)$ is a non-negative integer for any non-empty $W\subseteq \fj{\phi}(L)$, the present case can arise for at most $R$ values of $i$.

\noindent\textbf{Case 2:} $\xi_1=0$, $\xi_2\neq 0$.

In this case we again choose some $x\in X_{i, \xi_1, \xi_2}$, and let $X_{i+1}=X_{i, \xi_1, \xi_2}-x$.  Arguments just as in Case 1 show that here $N_1(X_{i+1})\leq N_1(X_i)$, while $N_T(x_{i+1})<N_T(X_i)$.  In particular, this case also arise for at most $R$ values of $i$.

\noindent\textbf{Case 3:} $\xi_1=\xi_2=0$.

In this case, we may take $Y=X_{i, 0, 0}$, since then $x$ and $\phi_{T}(x)$ are both $T$-generic for all $x\in Y$, by construction.  We then have that $\# Y\geq q^{-2Ri}\# X$, and we have seen that we arrive in this case with $i\leq 2R$.  This proves the lemma.
\end{proof}

Note that on each non-trivial class of $\fj{\phi}(L)/\phi^0(L)$, the absolute value function is constant, and so we may speak unambiguously about $|x+\phi^0(L)|$, as long as $x\not\in \phi^0(L)$.  If $T\in A$ is non-constant, we will say that a class $x\in \fj{\phi}(L)/\phi^0(L)$ is $T$-\emph{generic} if and only if
\[|\phi_T(x)|=\max_{0\leq i\leq r\deg(T)} |a_ix|^{q^i}.\]
In other words, in the language of Berkovich spaces, if $\zeta\in\Aberk{\CC_v}$ is the point corresponding to $x$, then $x$ will be $T$-generic if and only if
\[0 \vee \phi_T(\zeta)=\phi_T(0\vee\zeta),\]
where $x\vee y$ is the least upper bound of $x, y\in \Aberk{\CC_v}$ \cite[\S~1.4]{baker-rumely}.
In agreement with this, the trivial class will always be deemed generic.

\begin{lemma}\label{lem:vjbound}
There is a bound on $\#(\fj{\phi}(L_v)/\phi^0(L_v))$ depending only on $v(j_\phi)$ and $s$, where $\phi$ has potentially stable reduction of rank $r-s$.
\end{lemma}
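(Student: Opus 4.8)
The plan is to bound the number of cosets by a combination of a "height pigeonhole" argument on the nontrivial classes and a reduction to the potentially stable case. First I would pass to a finite extension $L_v'/L_v$ over which $\phi$ acquires stable reduction of rank $r-s$; since $[\fj{\phi}(L_v):\phi^0(L_v)]$ divides $[\fj{\phi}(L_v'):\phi^0(L_v')]$ up to a factor controlled by the ramification (and the ramification needed is bounded in terms of $v(j_\phi)$), it suffices to bound the component module after base change. So assume $\phi$ itself has stable reduction of rank $r-s$, and normalize by an isomorphism so that $j_{\phi,v}=c(\phi)$, i.e.\ the coefficients of $\phi_T$ are integral with at least one a unit. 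By the remark following Lemma~\ref{lem:finitephi0} we then have $\phi^0(\CC_v)=\Ocal_v$ and $\phi^0_{\mathrm{Berk}}=\mathcal{D}(0,1)$.

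Next I would use the structure of $\fj{\phi}_{\mathrm{Berk}}$ established above: it is a downward-closed union of disks, each nontrivial component having the form $\zeta + \mathcal{D}(0, q^{-j_{\phi_T,v}+c(\phi)})$ for some $\zeta$ with $|\zeta|_v > q^{-j_{\phi_T,v}+c(\phi)}$, and contained in the disk $D(0,B_T)$ of Lemma~\ref{lem:juliadisk}. Hence every nontrivial class $x+\phi^0(L_v)$ has $|x|_v$ lying in the finite interval $(1,\, B_T]$ on the value group scale — more precisely $\log|x|_v$ lies between $0$ and $\log B_T$, and $\log B_T$ is bounded in terms of $j_{\phi_T,v}=j_{\phi,v}$ (equivalently $-v(j_\phi)$) and $r$. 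Since $x\in L_v$, the value $v(x)$ lies in $\frac{1}{e}\ZZ$ where $e$ is the ramification index, and after the normalization of the first paragraph $e$ itself is bounded in terms of $s$ and $v(j_\phi)$; so there are only boundedly many possible values of $|x|_v$. For each fixed value $|x|_v = \rho$, the classes with that radius inject into the residue disk structure: two elements $x,y$ of that radius lie in the same class iff $|x-y|_v < 1$, and distinct classes at radius $\rho$ correspond to distinct points in a bounded-dimensional $\FF_q$-vector space of "leading digits" (the coefficients of $x$ in a uniformizer expansion between exponent $v(\rho)$ and $0$), whose size is $q$ to a power bounded by $e\log(1/\rho)\le e\log B_T$. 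Multiplying the number of admissible radii by the maximal number of classes per radius gives the desired bound in terms of $s$ and $v(j_\phi)$ alone.

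Finally, I would record that this also forces the $A$-module $\fj{\phi}(L_v)/\phi^0(L_v)$ to be finite (which is consistent with Theorem~\ref{th:drinfeldKN}), though only the cardinality bound is needed here. The main obstacle I anticipate is the first step: controlling how the index $[\fj{\phi}(L_v):\phi^0(L_v)]$ changes under the base change to the field of stable reduction, and showing that the degree of that extension — hence the ramification index $e$ — is bounded purely in terms of $s$ and $-v(j_\phi)$. This should follow from the semistable reduction theory for Drinfeld modules (the stable model is obtained after an extension whose degree is controlled by the denominators appearing in the slopes of the relevant Newton polygons, which are in turn governed by $v(j_\phi)$ and the rank drop $s$), together with the observation that base change can only multiply the order of the component module by a factor dividing a power of $e$. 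Everything after that is the elementary non-archimedean counting sketched above.
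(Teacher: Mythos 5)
There is a genuine gap, and it is in the step you treat as routine: the count of ``classes per radius.'' You bound the number of cosets of the unit ball among \emph{all} elements of $L_v$ of a given absolute value by a power of the residue field size (your ``leading digits'' space), and you bound the number of admissible radii using the ramification index of $L_v$. Both quantities depend on the local field $L_v$ itself --- the residue field $\FF_{q^{\deg(v)f}}$ and the ramification index are unbounded as $v$ ranges over the places of a global field $L$ --- whereas the lemma asserts a bound depending \emph{only} on $v(j_\phi)$ and $s$. This uniformity is not cosmetic: in the proof of Theorem~\ref{th:simplefams} the lemma is used to choose a single ideal $\mathfrak{a}\subseteq A$, depending only on the cutoff $x$, that annihilates $\fj{\phi}(L_v)/\phi^0(L_v)$ simultaneously at all places $v\in S_x$ and all fibres; a bound that grows with $\deg(v)$ or $[L_v:K_v]$ would not permit this. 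The underlying reason your count is too coarse is that you only use that nontrivial classes sit in an annulus, i.e.\ you bound the number of unit-ball cosets in $L_v$, not in $\fj{\phi}(L_v)$; the actual component module is far smaller, and seeing this requires the $A$-module structure. The paper's proof works directly over $L_v$ (no base change) and runs a pigeonhole over the set $(\phi[T]+\phi^0(L_v))/\phi^0(L_v)$, which has exactly $q^s$ elements because the potentially stable reduction has rank $r-s$: after at most $ns$ translation steps inside any additive subgroup one finds a nontrivial class $x$ with $x,\phi_T(x),\dots,\phi_{T^{n-1}}(x)$ all $T$-generic, and a generic nontrivial class strictly increases $-v$ under $\phi_T$; since all these iterates must stay in a window of width $v(j_\phi)$, taking $n>v(j_\phi)+1$ gives a contradiction, and every additive subgroup (hence the module itself) has at most roughly $2q^{(v(j_\phi)s+2s+1)(v(j_\phi)+2)s}$ elements --- a bound in $q$, $s$, $v(j_\phi)$ alone.

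A secondary, smaller issue: your opening reduction to the stable-reduction field is more delicate than stated. The injection $\fj{\phi}(L_v)/\phi^0(L_v)\hookrightarrow \fj{\phi}(L_v')/\phi^0(L_v')$ is indeed automatic (because $\phi^0(F)=F\cap\phi^0(\CC_v)$ for any subfield $F$), so no ``factor controlled by ramification'' is needed there; but the normalization of the valuation, and hence the meaning of $v(j_\phi)$, changes by the ramification index of $L_v'/L_v$, so you would still owe a proof that this auxiliary extension has degree bounded in terms of the permitted data (it can be bounded in terms of $r$ and $q$, but that is extra machinery). The paper sidesteps all of this by never changing the base field and only using that the reduction is \emph{potentially} stable of rank $r-s$ to count $(\phi[T]+\phi^0(L_v))/\phi^0(L_v)$. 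Even granting your first step, however, the residue-field-dependent counting in the second step is where the argument fails to prove the stated lemma.
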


\begin{proof}
As above, for $\xi\in(\phi[T]+\phi^0(L_v))/\phi^0(L_v)$, let
\[Z_\xi=\{x+\phi^0(L_v):|x-\xi|<|\xi|=|x|\},\]
and let
\[Z_0=\{x+\phi^0(L_v)\text{ generic}\},\]
where we take the trivial coset to be generic.  Similarly to the above, for any set $W\subseteq \fj{\phi}(L_v)/\phi^0(L_v)$, we define $N(W)$ so that $q^{N(W)}$ is the number of elements $\xi\in(\phi[T]+\phi^0(L_v))/\phi^0(L_v)$ such that $|\xi|\leq \max\{|x|:x\in W\}$.  Note that there are $q^s$ distinct elements of $(\phi[T]+\phi^0(L_v))/\phi^0(L_v)$, and so we have $0\leq N(W)\leq s$ for all $W\subseteq \fj{\phi}(L_v)/\phi^0(L_v)$.

Now, let $G\subseteq \fj{\phi}(L_v)/\phi^0(L_v)$ be an additive subgroup containing at least $2q^{(ns+1)ns}$ distinct elements.  For each $i\geq 0$ we will construct a subset $X_i\subseteq G$ such that $\# X_i\geq 2q^{(ns+1-i)ns}$.  We let $X_0=G$, and for each $i\geq 0$ and each
\[(\xi_1, ..., \xi_n)\in \left((\phi[T]+\phi^0(L_v))/\phi^0(L_v)\right)^n\]
we let
\[X_{i, \xi_1, \xi_2, ..., \xi_n}=\{x\in X_i:x\in Z_{\xi_1}, \phi_T(x)\in Z_{\xi_2}, ..., \phi_{T^{n-1}}(x)\in Z_{\xi_n}\}.\]
Note that there are $q^{sn}$ sets $X_{i, \xi_1, ..., \xi_n}$, and that these cover $X_i$.  In particular, since $X_i$ contains at least $q^{(ns-i)(ns)}$ elements, there is at least one subset of the form $X_{i, \xi_1, ..., \xi_n}$ containing at least $q^{(ns-i-1)ns}$ elements.

First we consider the case where $\xi_j\neq 0$, for some $1\leq j\leq n$.  In this case we choose some $x\in X_{i, \xi_1, ..., \xi_n}$, and let $X_{i+1}=x-X_{i, \xi_1, ..., \xi_n}$.  Note that for each $0\leq k\leq n-1$, we have
\[N(\phi_{T^k}(X_{i+1}))\leq N(\phi_{T^k}(X_{i, \xi_1, ..., \xi_n}))\leq N(\phi_{T^k}(X_i)).\]
On the other hand, for all $y\in X_{i, \xi_1, ..., \xi_n}$, we have by definition
\[|\phi_{T^{j-1}}(x-y)|=|\phi_{T^{j-1}}(x)-\xi_j-\phi_{T^{j-1}}(y)+\xi_j|<|\xi_j|=|\phi_{T^{j-1}}(x)|=|\phi_{T^{j-1}}(y)|,\]
and so
\[N(\phi_{T^{j-1}}(X_{i+1}))<N(\phi_{T^{j-1}}(X_{i, \xi_1, ..., \xi_n}))\leq N(\phi_{T^{j-1}}(X_i)).\]
In other words, if we set
\[M_i=\sum_{k=0}^{m-1}N(\phi_{T^k}(X_i)),\]
for each $i$, we have $M_{i+1}<M_i$, and so $0\leq M_i\leq ns-i$.  In particular, the case where $\xi_j\neq 0$ for some $j$ occurs at most $ns$ times.  

So for some $i\leq ns$, we have $\# X_{i, 0, 0, ..., 0}\geq 2q^{(ns+1-i-1)ns}\geq 2$.  In particular, there is some non-trivial $x\in \fj{\phi}(L_v)/\phi^0(L_v)$ such that $x, \phi_T(x), \phi_{T^2}(x), ..., \phi_{T^{n-1}}(x)$ are all $\phi_T$-generic.   Note that if $y\in\fj{\phi}(L_v)$ is $\phi_T$-generic and $|\phi_T(y)|\leq |y|$, then $|a_iy^{q^i}|\leq |y|$ for all $i$.  Re-arranging this, we have that $\log|y|\leq -\frac{1}{q^i-1}\log|a_i|$ for all $i$, and hence that $y\in \phi^0(L_v)$.  In other words, if $y$ is $\phi_T$-generic and $y+\phi^0(L_v)$ is non-trivial in $\fj{\phi}(L_v)/\phi^0(L_v)$, we must have $-v(y)<-v(\phi_T(y))$.  Applying this to our special point $x$, we have that $-v(\phi_{T^{j}}(x))\geq -v(x)+j$ for all $0\leq j<n$.  Since $x+\phi^0(L_v)$ is non-trivial, we have $-j_{\phi, v}+c(\phi)<\log|x|=-v(x)\deg(v)$.  But, since $\phi_{T^{n-1}}(x)$ is $\phi_T$-generic, we also have
\[-v(\phi_{T^{n-1}}(x))\deg(v)=\log|\phi_{T^{n-1}}(x)|\leq c(\phi).\]
Combining these, we have
\begin{multline*}
-j_{\phi, v}+c(\phi)+(n-1)\deg(v)<-v(x)\deg(v)+(n-1)\deg(v)\\\leq -v(\phi_{T^{n-1}}(x))\deg(v)\leq c(\phi),
\end{multline*}
and hence
\[n-1< \frac{j_{\phi, v}}{\deg(v)}=v(j_\phi).\]
This is a contradiction if we choose an integer $n$ with  $v(j_\phi)+1\leq n< v(j_\phi)+2$, and hence every additive subgroup of $\fj{\phi}(L_v)/\phi^0(L_v)$ contains fewer than \[2q^{(v(j_\phi)s+2s+1)(v(j_\phi)+2)s}\] distinct elements.  In particular, $\fj{\phi}(L_v)/\phi^0(L_v)$ itself contains fewer than this number of elements.
\end{proof}

\begin{remark}
An examination of the proof above shows that $i$ never gets larger than $n(s'-1)$, where $s'$ is the number of distinct slopes in the Newton polygon of $\phi_T$, or equivalently, the number of distinct sizes of elements of $\phi[T]$.  Since the bound in the lemma still ends up depending on $s$, however, this improvement does not seem particularly useful.
\end{remark}


\section{Drinfeld modules over global fields}
\label{sec:global}

In this section we let $A$ and $K$ be as in Section~\ref{sec:prelims}, and take $L/K$ to be a finite extension.  

\begin{definition}\label{def:mudef}
For any Drinfeld $A$-module $\phi/L$, we set
\[S_\phi(\mathfrak{a})=\left\{v\in M_L^0:\mathfrak{a}\left(\fj{\phi}(L_v)/\phi^0(L_v)\right)\neq \{0\}\right\}.\]
If, in addition, $N\geq 0$, then we set
\[\mu(\phi, N, \mathfrak{a})=\max_{\substack{S\subseteq M_L^0\\ \# S\leq N}}\left\{\frac{\sum_{v\in M_L^0\setminus (S_\phi(\mathfrak{a})\cup S)}j_{\phi, v}}{\sum_{v\in M_L^0\setminus S}j_{\phi, v}}\right\},\]
where sums over empty sets take the value 1, and we take $\mu(\phi, N, \mathfrak{a})=1$ by convention if the denominator vanishes.
\end{definition}

Before proceeding with the proof of Theorem~\ref{th:szpiroresult}, we recast the statement of Conjecture~\ref{question} in the case $N=0$.  For any finite set $S\subseteq M_L$, let $\mathbf{A}_{L, S}$ denote the $S$-free adeles of $L$, that is, the restricted product
\[\mathbf{A}_{L, S}={\prod_{v\in M_L\setminus S}}^*L_v,\]
consisting of elements of the product whose entries are integral except at finitely many places.  Note that the module structure $\phi(L)$ extends naturally to a module structure $\phi(\mathbf{A}_{L, S})$, and we write $\fj{\phi}(\mathbf{A}_{L S})$ for those elements $\alpha\in \phi(\mathbf{A}_{L, S})$ such that $\alpha_v\in \fj{\phi}(L_v)$ for all $v$, and similarly for $\phi^0(\mathbf{A}_{L, S})$.
\begin{theorem}
Conjecture~\ref{question}, with $N=0$, is equivalent to the following statement: There exists an ideal $\mathfrak{a}\subseteq A$ such that for every Drinfeld $A$-module $\phi/L$ of rank $r$, there exists a finite set $S\supseteq M_L^\infty$ of places such that $\fj{\phi}(\mathbf{A}_{L, S})/\phi^0(\mathbf{A}_{L, S})$ is $\mathfrak{a}$-torsion, and
\[ \sum_{v\in S}j_{\phi, v}\leq \frac{1}{q}\sum_{v\in M_L^\infty}j_{\phi, v}+\left(1-\frac{1}{q}\right)h(j_\phi).\]
\end{theorem}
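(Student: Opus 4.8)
The plan is to prove the equivalence one Drinfeld module at a time, with the ideal $\mathfrak{a}$ held fixed; quantifying afterwards over all $\phi/L$ of rank $r$ then yields the stated biconditional. So fix $\phi$ and $\mathfrak{a}$, and keep the notation $S_\phi(\mathfrak{a})$, $j_{\phi,v}$. The first step is to identify the adelic component module with a direct sum of local ones. Since the coefficients of $\phi$ lie in $L$ and the leading coefficient of $\phi_T$ lies in $L^{\ast}$, $\phi$ has good reduction at all but finitely many finite places $v$, and at each such place $\fj{\phi}(L_v)=\phi^0(L_v)=\Ocal_{L_v}$: this follows from Lemma~\ref{lem:finitephi0} (which gives $\phi^0_{\mathrm{Berk}}=\mathcal{D}(0,1)$ in that case) together with Lemma~\ref{lem:juliadisk} and the observation that the nonzero $T$-torsion of a good-reduction module consists of units, as one reads off the Newton polygon of $\phi_T$. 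Hence any $(\alpha_v)\in\phi(\mathbf{A}_{L,S})$ with $\alpha_v\in\fj{\phi}(L_v)$ for all $v$ automatically meets the restricted-product integrality condition, so that $\fj{\phi}(\mathbf{A}_{L,S})=\prod_{v\notin S}\fj{\phi}(L_v)$ and likewise for $\phi^0$, giving
\[\fj{\phi}(\mathbf{A}_{L,S})/\phi^0(\mathbf{A}_{L,S})\;\cong\;\bigoplus_{v\in M_L\setminus S}\fj{\phi}(L_v)/\phi^0(L_v),\]
which is finite by Lemma~\ref{lem:vjbound} and has only finitely many nontrivial summands.

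From this identification the $\mathfrak{a}$-torsion hypothesis becomes purely local: it holds if and only if $\mathfrak{a}$ annihilates $\fj{\phi}(L_v)/\phi^0(L_v)$ for every $v\notin S$, i.e.\ if and only if $S_\phi(\mathfrak{a})\subseteq S$. Since $S\supseteq M_L^\infty$ is also imposed and $S_\phi(\mathfrak{a})\cup M_L^\infty$ is a finite set, the admissible $S$ are exactly the finite sets containing $S_\phi(\mathfrak{a})\cup M_L^\infty$. Because every $j_{\phi,v}\geq 0$, enlarging $S$ only increases $\sum_{v\in S}j_{\phi,v}$, so the minimal admissible set $S=S_\phi(\mathfrak{a})\cup M_L^\infty$ is optimal: an admissible $S$ satisfying the displayed height inequality exists if and only if that inequality holds for this particular $S$.

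It remains to recognize the resulting inequality as $\mu(\phi,0,\mathfrak{a})\geq 1/q$. Applying the product formula~\eqref{eq:productfla} to the leading coefficient of $\phi_T$ gives $\sum_{v\in M_L}c_v(\phi)=0$, whence $h(j_\phi)=\sum_{v\in M_L}j_{\phi,v}$ splits as $P+I$, with $P=\sum_{v\in M_L^0}j_{\phi,v}$ and $I=\sum_{v\in M_L^\infty}j_{\phi,v}$. Substituting $h(j_\phi)=P+I$, the right-hand side becomes $\tfrac1q I+(1-\tfrac1q)(P+I)=I+(1-\tfrac1q)P$, while the left-hand side for $S=S_\phi(\mathfrak{a})\cup M_L^\infty$ is $I+\sum_{v\in S_\phi(\mathfrak{a})}j_{\phi,v}$; cancelling the common $I$ reduces the inequality to
\[\sum_{v\in S_\phi(\mathfrak{a})}j_{\phi,v}\;\leq\;\Bigl(1-\tfrac1q\Bigr)\sum_{v\in M_L^0}j_{\phi,v},\]
which, dividing by $P$ when $P>0$, is exactly $\mu(\phi,0,\mathfrak{a})=1-\bigl(\sum_{v\in S_\phi(\mathfrak{a})}j_{\phi,v}\bigr)/P\geq 1/q$. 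The degenerate case $P=0$ (potentially good reduction at every finite place) must be checked against the stated conventions: there $\mu(\phi,0,\mathfrak{a})=1$ by definition, $S_\phi(\mathfrak{a})=\varnothing$, and $S=M_L^\infty$ turns the height inequality into an equality, so both sides hold.

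I expect the main obstacle to be the first step — confirming that the adelic quotient is genuinely the direct sum of the local component modules. The point is not merely that $j_{\phi,v}=0$ at almost all $v$, but that $\fj{\phi}(L_v)=\phi^0(L_v)=\Ocal_{L_v}$ there, so that the \emph{restricted} product collapses to the full product already on the Julia-set level; this needs the Newton-polygon input above and a little care with the Berkovich description of $\phi^0$ to ensure that passing to $L_v$-points is compatible with everything. Once this is in place, the reduction to the optimal $S$ and the cancellation of the infinite part $I$ are elementary, the only remaining ingredient being the decomposition $h(j_\phi)=\sum_v j_{\phi,v}$, which is itself just the product formula applied to the leading coefficient of $\phi_T$.
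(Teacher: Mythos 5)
Your argument is correct and is essentially the definitional unwinding that the paper intends (it states this recasting without an explicit proof): the adelic quotient collapses to the finite direct sum of the local component modules, the only constraint on admissible $S$ beyond finiteness is $S\supseteq S_\phi(\mathfrak{a})\cup M_L^\infty$, the minimal such $S$ is optimal since $j_{\phi,v}\geq 0$, and the displayed inequality rearranges to $\mu(\phi,0,\mathfrak{a})\geq 1/q$ using $h(j_\phi)=\sum_{v\in M_L}j_{\phi,v}$ (with the degenerate case handled by the stated conventions). One small repair: the claim that the nonzero $T$-torsion at a good-reduction place consists of units fails at the finitely many places where $T$ is not a unit, but this is harmless, both because you only need almost all places and because at any good-reduction place the leading coefficient of $\phi_T$ is a unit, so $|x|>1$ forces $|\phi_T(x)|=|x|^{q^{r\deg T}}$ and escape, giving $\fj{\phi}(L_v)=\phi^0(L_v)=\Ocal_{L_v}$ directly.
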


We begin with a simple lemma which will allow us to consider only the case in which $h(j_\phi)$ is large.
\begin{lemma}\label{lem:largej}
For any $B\geq 0$, there is a uniform bound on $\#\phi^{\mathrm{Tors}}(L)$ as $\phi/L$ varies over Drinfeld modules of rank $r$ with $h(j_\phi)\leq B$.
\end{lemma}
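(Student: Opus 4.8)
The plan is to deduce the statement from two facts already at hand: the Northcott finiteness for the weighted projective space $M_{A, r}$ recorded at the end of Section~\ref{sec:prelims}, and Poonen's theorem~\cite{poonenunif} bounding torsion uniformly over the twists of a single Drinfeld module. First I would note that the $j$-invariant $j_\phi$ of a rank-$r$ Drinfeld module $\phi/L$ is, by construction, a point of $M_{A, r}$ possessing a representative with coordinates in $L$, so that $[L(j_\phi):L]=1$. Applying the Northcott statement with $B_1 = B$ and $B_2 = 1$ shows that there are only finitely many points $j^{(1)}, \dots, j^{(k)} \in M_{A, r}(L^{\mathrm{sep}})$ with $h(j^{(i)}) \le B$ and $[L(j^{(i)}):L] = 1$; hence every $\phi/L$ of rank $r$ with $h(j_\phi) \le B$ satisfies $j_\phi = j^{(i)}$ for some $i$.

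Next, after discarding those $i$ for which no rank-$r$ Drinfeld module over $L$ realises $j^{(i)}$, I would fix for each remaining index a single such module $\phi^{(i)}/L$. By the lemma of Section~\ref{sec:prelims} identifying $j_\phi = j_\psi$ with $\overline{L}$-isomorphism, any $\phi/L$ with $j_\phi = j^{(i)}$ is $\overline{L}$-isomorphic to $\phi^{(i)}$, that is, is an $L$-twist of $\phi^{(i)}$. Poonen's theorem~\cite{poonenunif} then furnishes a bound $N_i$ on $\#\psi^{\mathrm{Tors}}(L)$ as $\psi$ ranges over the $L$-twists of $\phi^{(i)}$, and $N = \max_i N_i$ is the required uniform bound. (Invoking~\cite{poonenunif} here does not make the later results circular, since the new methods of this paper are deployed precisely in the complementary regime, where $h(j_\phi)$ is large.)

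I do not anticipate any genuine obstacle; the argument is an assembly of the two cited inputs. The single point worth emphasising is that the Northcott finiteness of $j$-invariants does not by itself suffice: one $j$-invariant corresponds to infinitely many $L$-isomorphism classes of Drinfeld modules --- its twists, classified by a cohomology group that is typically infinite over a global field --- so it is the uniformity in~\cite{poonenunif} across each of these finitely many twist families that actually closes the argument.
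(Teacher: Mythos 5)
Your argument is essentially the paper's own proof: the Northcott property of the height on $M_{A,r}$ reduces the statement to finitely many $j$-invariants, and Poonen's uniform bound for torsion in twist families handles each one. The only detail the paper makes explicit that you elide is that the twisting isomorphism $\alpha$ satisfies $\alpha^{q^k-1}\in L$ for some $k\leq r\max_i\{\deg(T_i)\}$, so the twists are isomorphic over extensions of bounded degree, which is the form in which Theorem~3 of \cite{poonenunif} is invoked.
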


\begin{proof}
As noted above, the height $h$ on weighted projective space satisfies the Northcott property, and so in particular there are only finitely many $j\in M_{A, r}(L)$ with $h(j)\leq B$.  It suffices, then, to prove the claim for any single value of $j\in M_{A, r}(L)$.  But if $\phi/L$ and $\psi/L$ satisfy $j_\phi=j_\psi$, then there is an $\alpha\in L^\mathrm{sep}$ such that $\alpha\psi=\phi\alpha$.  Furthermore, $\alpha^{q^k-1}\in L$ for some $k\leq r\max_{1\leq i\leq m}\{\deg(T_i)\}$, if $T_1, ..., T_m$ generate $A$ as an $\FF_q$-algebra.  Theorem~3 of \cite{poonenunif} shows that $\#\psi^{\mathrm{Tors}}(L)$ is bounded uniformly for twists of a fixed $\phi/L$ over extensions of bounded degree.
\end{proof}

\begin{proof}[Proof of Theorem~\ref{th:szpiroresult}]
Invoking Lemma~\ref{lem:largej}, we will assume that
\[h(j_\phi)>  \frac{q^{r-1}}{(q^r-1)^2}\sum_{1\leq i\leq m}\deg(T_i).\]

Suppose that, for some given $\mathfrak{a}\subseteq A$ and $N$ we have $\mu(\phi, N, \mathfrak{a})\geq 1/q$, so that for some set $S\subseteq M_L^0$ containing at most $N$ places, we have
\[\sum_{v\in M_L^0\setminus  (S_\phi(\mathfrak{a})\cup S)}j_{\phi, v}\geq \frac{1}{q}\sum_{v\in M_L^0\setminus S}j_{\phi, v}.\]

Let $T_1, ..., T_m$ be a generating set for $A$ as an $\FF_q$-algebra.  Applying Lemma~\ref{lem:localchoose} to $X=\mathfrak{a}\phi^{\mathrm{Tors}}(L)\subseteq \phi^{\mathrm{Tors}}(L)$, we see that there is a subgroup $Y$ with \[\#Y\geq \left(q^{-4r^2\sum_{i=1}^m\deg(T_i)[L:K]}\right) \#X\] such that for every $v\in M_L^\infty$, every $1\leq i\leq m$, and every $x\in Y$, we have either $x=0$, or else
\[\lambda_{\phi, v}(x)\geq (1-q^{-1})j_{\phi_{T_i}, v}-\frac{1}{q(q^{r\deg(T_i)-1})^2}\log^+|T_i^{-1}|.\]
Note that, for any place $v\in M_L$, the quantity
$j_{\phi, v}$ is simply the largest of the values $j_{\phi_{T_i, v}}$,
and so we have
\[\lambda_{\phi, v}(x)\geq (1-q^{-1})j_{\phi, v}-\delta_v\]
for every non-zero $x\in Y$ and every place $v\in M_L^\infty$, where
\[\delta_v=\max_{1\leq i\leq m}\frac{1}{q(q^{r\deg(T_i)-1})^2}\log^+|T_i^{-1}|.\]

Similarly, we can apply Lemma~\ref{lem:localchoose} to the places in $S$ to choose a subset $Z\subseteq Y$ with
\[\# Z \geq \left(q^{-4r^2\left(\sum_{i=1}^m\deg(T_i)[L:K]+\#S\right)}\right)\# X\]
such that for all non-zero $x\in Z$ and each $v\in S$, we have
\[\lambda_{\phi, v}(x)\geq (1-q^{-1})j_{\phi, v}-\delta_v.\]

Note that for any $x\in Z$ we have $x\in \phi^0(L_v)$ for every $v\in M_L^0\setminus S_\phi(\mathfrak{a})$, since $Z\subseteq \mathfrak{a}\fj{\phi}(L)$, and $\mathfrak{a}$ annihilates the component group $\fj{\phi}(L)/\phi^0(L)$ for each $v\in M_L^0\setminus S_\phi(\mathfrak{a})$.  So in particular, for $v\in M_L^0\setminus S_\phi(\mathfrak{a})$ and $x\in Z$ non-zero, we have
\[\lambda_{\phi, v}(x)\geq j_{\phi, v}.\]  On the other hand, for every $v\in M_L^0$, and so in particular for $v\in S_\phi(\mathfrak{a})$, we have the trivial lower bound
\[\lambda_{\phi, v}(x)\geq -\left(\frac{1-q^{-(r-1)}}{q-1}\right)j_{\phi, v}.\]

So if $\mu(\phi, N, \mathfrak{a})\geq 1/q$, we have by definition
\[\frac{1}{q} \sum_{v\in M_L^0\setminus S}j_{\phi, v}\leq \sum_{v\in M_L^0\setminus  (S_\phi(\mathfrak{a})\cup S)}j_{\phi, v},\]
and
\[\left(1-\frac{1}{q}\right) \sum_{v\in M_L^0\setminus S}j_{\phi, v}\geq \sum_{v\in M_L^0\setminus (S_\phi(\mathfrak{a})\cup S)}j_{\phi, v},\]
and hence
\begin{eqnarray*}
0&=&\sum_{v\in M_L}\lambda_{\phi, v}(x)\\
&=&\sum_{v\in M_L^0\setminus  (S\cup S_\phi(\mathfrak{a}))}\lambda_{\phi, v}(x)+\sum_{v\in S_\phi(\mathfrak{a})}\lambda_{\phi, v}(x)+\sum_{v\in M_L^\infty\cup S}\lambda_{\phi, v}(x) \\
&\geq &\sum_{v\in M_L^0\setminus (S\cup S_\phi(\mathfrak{a}))}j_{\phi, v}+\sum_{v\in S_\phi(\mathfrak{a})} -\left(\frac{1-q^{-(r-1)}}{q-1}\right)j_{\phi, v}+\sum_{v\in M_L^\infty\cup S}(1-q^{-1})j_{\phi, v}-\delta_v \\
&\geq&\frac{1}{q}\sum_{v\in M_L^0\setminus S} j_{\phi, v}-\left(\frac{1-q^{-(r-1)}}{q-1}\right) \left(1-\frac{1}{q}\right)\sum_{v\in M_L^0\setminus S}j_{\phi, v}+\sum_{v\in M_L^\infty\cup S}(1-q^{-1})j_{\phi, v} -\sum_{v\in M_L}\delta_v\\
&=&\frac{1}{q^r} \sum_{v\in M_L^0\setminus S} j_{\phi, v}+\sum_{v\in M_L^\infty\cup S}(1-q^{-1})j_{\phi, v}-\sum_{v\in M_L}\delta_v\\
&\geq & \frac{1}{q^r} h(j_\phi)-\sum_{v\in M_L}\delta_v,
\end{eqnarray*}
and so
\[h(j_\phi)\leq q^r\sum_{v\in M_L}\delta_v\leq \frac{q^{r-1}}{q(q^r-1)^2}\sum_{1\leq i\leq m}\deg(T_i).\]
This contradictions our assumption, and so it must be the case $Y=\{0\}$, and hence that \[\# \mathfrak{a}\phi^{\mathrm{Tors}}(L)\leq \left(q^{-4r^2\left(\sum_{i=1}^m\deg(T_i)[L:K]+\#S\right)}\right).\]
But it is well-known that $\phi^{\mathrm{Tors}}(L)/\mathfrak{a}\phi^{\mathrm{Tors}}(L)\subseteq (A/\mathfrak{a})^r$, and so given that $\mu(\phi, N, \mathfrak{a})\geq 1/q$, we have
\[\# \phi^{\mathrm{Tors}}(L)\leq \operatorname{Norm}(\mathfrak{a})^r\left(q^{-4r^2\left(\sum_{i=1}^m\deg(T_i)[L:K]+N\right)}\right).\]
\end{proof}

We now turn to the proof of Theorem~\ref{th:simplefams}.  As mentioned in the Introduction, a more general result than Theorem~\ref{th:simplefams} may be obtained with some more technical hypotheses.
\begin{theorem}\label{th:simplefams2}
Let $L/K$ be a finite extension, let $X/\FF_q$ be a curve, let $\phi/L(X)$ be a simple family of Drinfeld modules, such that the generic fibre has at least 3 places of genuinely bad reduction, suppose that $q\neq 2, 3$, and fix $B\geq 1$.  Then the family of fibres $\phi_\beta$ with $\beta\in X(L)$ of inseparable degree at most $B$ satisfies Conjecture~\ref{question}.  In particular, $\# \phi_\beta^\mathrm{Tors}(L)$ is bounded in terms of $\deg_\mathrm{i}(\beta)$.
\end{theorem}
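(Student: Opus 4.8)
The plan is to reduce Conjecture~\ref{question} (with $N=0$) for the family $\phi_\beta$ to a statement about how the local contributions $j_{\phi_\beta,v}$ are distributed among places of various reduction types, and then to use the \emph{simplicity} of the family to control $S_{\phi_\beta}(\mathfrak{a})$ uniformly in $\beta$. The key point is that, for a simple family, the coefficients of $(\phi_a)_\beta(x)-ax$ lie in $\FF_q(X)$, so after specialising at $\beta$ the bad places of $\phi_\beta$ come in two flavours: the ``geometric'' bad places, coming from the (finitely many, fixed) bad places of the generic fibre pulled back through $\beta$, and the ``arithmetic'' bad places, coming from the place $\beta$ occupies relative to the fixed divisor data on $X$. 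First I would make this dichotomy precise: show that at a place $v$ lying over a point of $X(\overline{\FF_q})$ of genuinely bad reduction for the generic fibre, $j_{\phi_\beta,v}$ is (up to a bounded factor) $\log^+$ of the valuation of $\beta$ minus that bad point, while the component module $\fj{\phi_\beta}(L_v)/\phi^0(\phi_\beta)(L_v)$ has order annihilated by an ideal $\mathfrak{a}$ that is \emph{independent of $\beta$} — this uses Theorem~\ref{th:drinfeldKN} together with the fact that the potentially-stable rank $r-s$ and the relevant $v(j)$ are controlled along the family, since the local behaviour near a fixed bad point of the generic fibre is governed by a fixed model.

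The next step is the height bookkeeping. By the recast form of Conjecture~\ref{question} with $N=0$ proved just above, it suffices to exhibit, for each $\beta$ (of inseparable degree $\le B$), a finite set $S\supseteq M_L^\infty$ such that $\fj{\phi_\beta}(\mathbf{A}_{L,S})/\phi^0(\mathbf{A}_{L,S})$ is $\mathfrak{a}$-torsion and $\sum_{v\in S}j_{\phi_\beta,v}\le \frac1q\sum_{v\in M_L^\infty}j_{\phi_\beta,v}+(1-\frac1q)h(j_{\phi_\beta})$. I would take $S=M_L^\infty$ exactly, so the right-hand side becomes $h(j_{\phi_\beta})$, and the task reduces to: the finite places at which the component module is \emph{not} $\mathfrak{a}$-torsion contribute a proportion $<\frac1q$ (really, nothing, if $\mathfrak{a}$ is chosen to kill all of them). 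The content is then that with $\mathfrak{a}$ chosen as above, $S_{\phi_\beta}(\mathfrak{a})$ is empty — or, if one cannot kill the geometric bad places entirely with a single $\mathfrak{a}$, that the geometric bad places carry $\mathfrak{a}$-torsion component modules (bounded $v(j)$, bounded $s$) while the arithmetic bad places, whose component modules may be large, contribute at most a $(1-\frac1q)^{-1}\cdot\frac1q$... — here is where the hypothesis ``at least $3$ genuinely bad geometric places'' and $q\ne 2,3$ enters: an $\FF_q$-rational divisor of degree $\ge 3$ on which $\beta-$ (these points) must be supported forces, by the product formula applied to the $j$-invariant of the family, that the arithmetic part of $h(j_{\phi_\beta})$ is dominated by the geometric part by a factor depending only on $q$ and the number of geometric bad places, and $3$ of them with $q\ge 4$ makes this factor beat $q$.

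Concretely, I would argue as follows: write $j_{\phi_\beta}\in M_{A,r}(L)$; because the family is simple, $j_{\phi_\beta}$ is, up to the weighted-projective normalisation, a fixed rational function $g(\beta)\in \PP^{\vec w}(\FF_q(X))$ evaluated at $\beta$, so $h(j_{\phi_\beta})=\deg(g)\,h(\beta)+O(1)$ by functoriality of heights under morphisms, and moreover the \emph{finite} part of the height decomposes as a sum over the (fixed) zeros and poles of $g$ on $X$, the $i$-th of multiplicity $m_i$, of $m_i\,h_i(\beta)$ where $h_i$ measures $v$-adic proximity of $\beta$ to that point. The places where the component module can fail to be $\mathfrak{a}$-annihilated are precisely those contributing to the terms indexed by the genuinely bad points; at those, $v(j_{\phi_\beta})$ equals (a bounded multiple of) the proximity, which is \emph{unbounded}, so no fixed $\mathfrak{a}$ kills them, and the real inequality to verify is
\[
\sum_{i\in \mathrm{bad}} m_i\,h_i(\beta)\ \ge\ \frac1q\Big(\sum_{i\in\mathrm{bad}} m_i\,h_i(\beta)+\sum_{i\in\mathrm{good}} m_i\,h_i(\beta)\Big),
\]
i.e. that at least proportion $\frac1q$ of the finite height sits on the bad points. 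The leverage is that $\beta$ cannot be simultaneously close to many points: the proximities $h_i(\beta)$ for distinct points are, up to $h(\beta)+O(1)$, supported on disjoint sets of places, so $\sum_i h_i(\beta)=h(\beta)+O(1)$, whereas being close to \emph{any one} bad point already forces $h(\beta)$ large, and one then checks that among $\ge 3$ bad points the worst case — $\beta$ close to exactly one of them, or to none — still puts proportion $\ge \frac13\ge\frac1q$ (for $q\ge 3$; the numbers $m_q$ and the quadratic-extension allowance absorb the $q=2,3$ and degree-$2$ cases) of the finite height on bad points, because the bad points, being $\ge 3$ in number and $\FF_q$-rational data, dominate the pole/zero divisor of $g$. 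This final combinatorial estimate on $h_i(\beta)$ is the main obstacle: it is essentially an $abc$/Roth-free, purely function-field counting argument (all places are non-archimedean, so ultrametric additivity makes the proximities exactly complementary), but getting the constants $m_q$ and the ``at most quadratic'' hypothesis to come out sharp requires care. Once the inequality holds, Theorem~\ref{th:szpiroresult} with $N=0$ and the fixed $\mathfrak{a}$ gives the uniform bound on $\#\phi_\beta^{\mathrm{Tors}}(L)$, with the dependence on $\deg_{\mathrm{i}}(\beta)$ entering only through the $[L(j_{\phi_\beta}):L]$-type factor in Lemma~\ref{lem:largej} and the inseparability bookkeeping in the passage from $j_{\phi_\beta}$ back to $\phi_\beta$.
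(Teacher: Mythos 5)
Your reduction to the recast ($N=0$) form of Conjecture~\ref{question}, the choice of $\mathfrak{a}$ via the boundedness of the component module at places of bounded $v(j)$, and the observation that for a simple family the finite places with $j_{\phi_\beta,v}>0$ are exactly the places where $\beta$ is $v$-adically close to the fixed bad locus of the generic fibre, all match the paper. But the inequality you then set out to verify is not the one that is needed, and the heart of the argument is missing. Since \emph{every} finite place contributing to $h(j_{\phi_\beta})$ lies over a pole of the composed $j$-map, ``the proportion of the finite height sitting on the bad points'' is trivially $1$; what must be shown is that a proportion at least $1/q$ of $\sum_{v\in M_L^0}j_{\phi_\beta,v}$ comes from places where $v(j_{\phi_\beta})$ is bounded by a fixed cutoff $x$ (so that, by Lemma~\ref{lem:vjbound}, a single ideal $\mathfrak{a}$ kills the component module there). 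You concede that no fixed $\mathfrak{a}$ works at places of high proximity, yet your displayed inequality compares ``bad points'' against ``good points'' of the family rather than low-multiplicity places against high-multiplicity places, so it never confronts the actual enemy: a parameter $\beta$ whose proximity to the bad locus is concentrated at one or two places with enormous multiplicity, for which $\mu(\phi_\beta,0,\mathfrak{a})$ is essentially $0$ for every fixed $\mathfrak{a}$. The ``complementary proximities'' observation does not exclude this, since $\beta$ can be extremely close to a single bad point at a single place and integral everywhere else. (Your first paragraph's claim that $v(j_{\phi_\beta})$ is controlled along the family at the geometric bad places is false, as you yourself note two paragraphs later.)

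What rules out such concentration is precisely a function-field $abc$ input, so your promise of an ``$abc$-free, purely counting'' argument cannot be kept. The paper argues by contradiction: if the places with $v(j_{\phi_\beta})\leq x$ carried less than $1/q$ of the finite height, then the \emph{support} of the pole divisor of $J_\phi\circ\beta$ would have total degree at most about $\left(\tfrac1q+\tfrac1x\right)$ times the separable degree of that map (plus bounded terms), and the Riemann--Hurwitz formula applied to the separable part of the induced extension of function fields then bounds $\deg_{\mathrm{s}}(\beta)$; this is exactly where the hypotheses ``at least $3$ genuinely bad geometric places'' and $q\geq 4$ enter, to make $\deg_{\mathrm{s}}(J_\phi)\left(1-\tfrac1q-\tfrac1x\right)>2$. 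Together with $\deg_{\mathrm{i}}(\beta)\leq B$ this leaves only finitely many exceptional fibres, which are absorbed into the final bound separately (via Theorem~\ref{th:szpiroresult} and Lemma~\ref{lem:largej}); note that the proof gives $\mu(\phi_\beta,0,\mathfrak{a})\geq 1/q$ only for all but finitely many $\beta$, not for every fibre as your outline would claim, and that the dependence on $\deg_{\mathrm{i}}(\beta)$ arises because Riemann--Hurwitz controls only the separable degree of $\beta$ --- not, as you suggest, from the twisting factor in Lemma~\ref{lem:largej}. Without a step of this Mason--Stothers/Riemann--Hurwitz type, your final combinatorial estimate is not merely delicate; it is false as stated.
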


\begin{proof}[Proof of Theorems~\ref{th:simplefams} and \ref{th:simplefams2}]
We will define a map $J_\phi:X\to \PP^N$ by setting $J_\phi=N(j_\phi)$, where $N:\PP^{\vec{w}}\to \PP^N$ is the usual map
\[[x_0:x_2:\cdots:x_N]\mapsto \left[x_0^{w_1w_2\cdots w_N}:x_1^{w_0w_2\cdots w_N}:\cdots :x_N^{w_0w_1\cdots w_{N-1}}\right].\]
Since $L/K$ is a finite rational extension, we have $L=\FF_{q^e}(C)$, for some rational  curve $C/\FF_{q^e}$, and some integer $e\geq 1$.  For each $\beta$ we obtain an extension $L/L_\beta$ with $L_\beta=\FF_{q^e}(\beta)$, which is in turn a finite extension of $F=\FF_{q^e}(J_\phi\circ\beta)$.
We will denote by $E$ the separable closure of $F$ in $L$,  so that $L/E$ is purely inseparable, and note that $L_\beta/E_\beta$ is also purely inseparable, where $E_\beta=E\cap L_\beta$.  We have the following diagram, with degrees of extensions noted.

\[\xymatrix{
& L\ar@{-}[ld]_{\deg_\mathrm{i}(J_\phi\circ\beta)} \ar@{-}[rd]^{\deg(\beta)} &\\
 E\ar@{-}[rd]_{\deg_\mathrm{s}(\beta)} & & L_\beta \ar@{-}[ld]^{\deg_\mathrm{i}(J_\phi)}\\
 & E_\beta \ar@{-}[d]^{\deg_\mathrm{s}(J_\phi)} &\\
 & F=\FF_{q^e}(J_\phi\circ \beta) &
}\]

Note that the places of $L$ over which $\phi_\beta$ has persistently bad reduction are precisely the places above the place of $F$ corresponding to the point at infinity.  We will denote this set of places by $S_L$, and similarly for the intermediate fields.  Our hypothesis that the generic fibre has at least $m_q$ places of bad reduction, counted with multiplicity, corresponds to the statement
that $\sum_{v\in S_{L_\beta}}\deg(v)\geq m_q$,
where we recall that $m_q=3$ unless $q=2$ or $q=3$, in which case $m_q=5$ or $m_q=4$, respectively.

We recall, for convenience, the behaviour of places in purely inseparable extensions from \cite[Proposition~7.5, p.~81]{rosenbook}.

\begin{lemma}\label{lem:insep}
Let $F_2/F_1$ be a finite extension of function fields with perfect constant fields, and let $F^c\subseteq F_2$ be the separable closure of $F_1$ in $F_2$.  Then $g_{F^c}=g_{F_2}$, and for every place $w\in M_{F^c}$, there is a unique place $v\in M_{F_2}$ above $v$, and this place satisfies $\deg(v)=\deg(w)$ and $e_v(F_2/F^c)=[F_2:F^c]$.
\end{lemma}

In particular, we have
\[\sum_{v\in S_{E_\beta}}\deg(v)=\deg_\mathrm{s}(J_\phi)\geq m_q,\]
since every one of these places corresponds to a unique place of $L_\beta$ of the same degree.

Identifying places of $L$ with $\FF_{q^e}$-rational divisors on $C$, we note that the places of persistently bad reduction of $\phi_\beta$, that is, those with $j_{\phi_\beta, v}>0$, are precisely those appearing as summands in the pole divisor of $J_\phi\circ\beta$.  More specifically, every $v\in M_L$ appears in this divisor with coefficient $\left(\prod w_i\right) v(j_{\phi_\beta})$, where the $w_i$ are the weights defining the projective space over $L$ in which $j_{\phi_\beta}$ resides.  For brevity, we will write $W=\prod w_i$.  Note that $Wh(j_{\phi_\beta})=\deg(J_\phi\circ\beta)=[L:F]$.  We will also denote by $S_F$ the set of places corresponding to $\operatorname{Supp}(H)$, and by $S_E$ and $S_L$ the sets of places of $E$ and $L$, respectively, lying above these.

We now apply the Riemann-Hurwitz Formula \cite[p.~90]{rosenbook}, much as in the proof of \cite[Theorem~7.17]{rosenbook}, to obtain a bound on $\deg_\mathrm{s}(\beta)=[E:E_\beta]$.
Since $[E:E_\beta]$ is separable, we have
\begin{eqnarray*}
2g_E-2&\geq & -2[E:E_\beta]+\sum_{v\in M_E}\left(e_v(E/E_\beta)-1\right)\deg(v)\\
&\geq&-2[E:E_\beta]+\sum_{v\in S_E} \left(e_v(E/E_\beta)-1\right)\deg(v)\\
&=&-2[E:E_\beta]+\sum_{v\in S_E} e_v(E/E_\beta)\deg(v)-\sum_{v\in S_E}\deg(v)\\
&=&-2[E:E_\beta]+\sum_{v\in S_{E_\beta}}[E:E_\beta] \deg(v)-\sum_{v\in S_E}\deg(v)\\
&=&(m-2)[E:E_\beta]-\sum_{v\in  S_E}\deg(v),
\end{eqnarray*}
where $m=m_q=\deg_\mathrm{s}(J_\phi)$.

Now, again by Lemma~\ref{lem:insep}, every place of $E$ lifts to a unique place of $L$ of the same degree, and $g_L=g_E$.  At the same time, $[E:E_\beta]=\deg_{\mathrm{s}}(\beta)$, and so we obtain
\begin{equation}\label{eq:abc}(\deg_\mathrm{s}(J_\phi)-2)\deg_\mathrm{s}(\beta)\leq 2g_L-2+\sum_{v\in  S_L}\deg(v).\end{equation}

Now, for any $\beta\in X(L)$ and any integer $x\geq 1$, let
\[S_{x}=\{v\in M_L^0:0<v(j_{\phi_\beta})\leq x\}\subseteq S_L,\]
where we will eventually take $x$ to be very large.  Note that
for $v\in S_x$ we have $\# \fj{\phi}(L_v)/\phi^0(L_v)$ bounded in terms of $x$, by Lemma~\ref{lem:vjbound}.  In particular, since there are only finitely many $A$-modules of a given (finite) size,
 there is an ideal $\mathfrak{a}\subseteq A$, which depends on $x$, such that $\mathfrak{a}(\fj{\phi}(L_v)/\phi^0(L_v))=\{0\}$ for all  $v\in S_x$.  We claim that, if $x$ is large enough, then $\mu(\phi_\beta, 0, \mathfrak{a})\geq 1/q$ for all but finitely many $\beta\in X(L)$ of bounded inseparable degree.  By the proof of Theorem~\ref{th:szpiroresult}, applied to this family, this gives a uniform bound on $\#\phi^{\mathrm{Tors}}(L)$ for those values of $\beta$.  Of course this bound may be adjusted to accommodate the remaining finitely many fibres in the family.

So we will assume that
\begin{equation}\label{eq:assume}\sum_{v\in S_{x}} j_{\phi, v}\leq \sum_{v\in M_L^0\setminus S_\phi(\mathfrak{a})}j_{\phi, v}<q^{-1}\sum_{v\in M_L^0}j_{\phi, v}.\end{equation}
  Note that, for any place $v\in M_L$, since \[Wv(j_\phi)=v(J_\phi\circ\beta)\in e_v(L/F)\ZZ,\] we certainly have
\[j_{\phi_\beta, v}>0\Longrightarrow Wj_{\phi_\beta, v}=Wv(j_{\phi_\beta})\deg(v)\geq e_v(L/F)\deg(v).\]
Noting that, by Lemma~\ref{lem:insep}, every place of $E$ lifts to a unique place of $L$ with ramification index $e_v(L/E)=[L:E]$, and so we have the weaker statement
\begin{equation}\label{eq:jlower}j_{\phi_\beta, v}>0\Longrightarrow Wj_{\phi_\beta, v}=Wv(j_{\phi_\beta})\deg(v)\geq [L:E]\deg(v).\end{equation}

Now, note that inequality~\eqref{eq:jlower} gives
\begin{equation}\label{eq:smallplaces}\sum_{\substack{v\in M_L^0\\0<v(j_{\phi_\beta})\leq x}}\deg(v)\leq \sum_{\substack{v\in M_L^0\\0<v(j_{\phi_\beta})\leq x}}\frac{W}{[L:E]}j_{\phi_\beta, v}\leq \frac{W}{q[L:E]}\sum_{v\in M_L^0}j_{\phi_\beta, v},\end{equation}
by inequality~\eqref{eq:assume}.

Similarly, inequalities~\eqref{eq:jlower} and~\eqref{eq:assume} also imply
\begin{equation}\label{eq:largeplaces}\sum_{\substack{v\in M_L^0\\v(j_{\phi_\beta})> x}}\deg(v)\leq \sum_{\substack{v\in M_L^0\\v(j_{\phi_\beta})> x}}\frac{1}{x}\cdot\frac{W}{[L:E]}j_{\phi_\beta, v}\leq \frac{W}{x[L:E]}\sum_{v\in M_L^0}j_{\phi_\beta, v}.\end{equation}

Combining inequalities~\eqref{eq:abc},~\eqref{eq:smallplaces}, and~\eqref{eq:largeplaces}, and the fact that $j_{\phi_\beta, v}\geq 0$ for all $v\in M_L$, we deduce that
\begin{eqnarray*}
(\deg_\mathrm{s}(J_\phi)-2)\deg_\mathrm{s}(\beta)&\leq&2g_L-2+\sum_{\substack{v\in M_L\\ j_{\phi_\beta, v}>0}} \deg(v)\\
&\leq& 2g_L-2+\sum_{\substack{v\in M_L^0\\0<v(j_{\phi_\beta})\leq x}}\deg(v)+\sum_{\substack{v\in M_L^0\\ v(j_{\phi_\beta})>x}}\deg(v)\\
&& + \sum_{v\in M_L^\infty} \deg(v)\\
&\leq &2g_L-2+ \left(\frac{1}{q}+\frac{1}{x}\right)\frac{W}{[L:E]} \sum_{v\in M_L^0}j_{\phi_\beta, v}+[L:K]\\
&\leq & 2g_L-2 +\left(\frac{1}{q}+\frac{1}{x}\right)\frac{W}{[L:E]} h_L(j_\phi)+[L:K]\\
&=&2g_L-2+\left(\frac{1}{q}+\frac{1}{x}\right)\frac{1}{\deg_\mathrm{i}(J_\phi\circ\beta)}\deg(J_\phi\circ\beta)+[L:K]\\
&=&2g_L-2+\left(\frac{1}{q}+\frac{1}{x}\right)\deg_\mathrm{s}(\beta)\deg_s(J_\phi)+[L:K].
\end{eqnarray*}
Re-arranging the above, with the assumptions of Theorem~\ref{th:simplefams}, we have
\begin{equation}\label{eq:degsbound}\left(\deg_\mathrm{s}(J_\phi)\left(1-\frac{1}{q}-\frac{1}{x}\right)-2\right)\deg_\mathrm{s}(\beta)\leq 2g_L-2+[L:K]\leq 0,\end{equation}
as $L$ is rational, and $[L:K]\leq 2$.
As long as \[q> \deg_\mathrm{s}(J_\phi)/( \deg_\mathrm{s}(J_\phi)-2),\] which is certainly the case if $\deg_\mathrm{s}(J_\phi)\geq m_q$, the left-hand-side of \eqref{eq:degsbound} is positive once $x$ is larger than a quantity depending only on $\deg_\mathrm{s}(J_\phi)$ and $q$.

More generally, under the assumptions of Theorem~\ref{th:simplefams2}, the inequality~\eqref{eq:degsbound} bounds $h_L(\beta)=\deg(\beta)$ in terms of $g_L$, $[L:K]$, and $\deg_\mathrm{i}(\beta)$.  We may thus relax our hypotheses that $L$ be a rational, at most quadratic, extension of $K$, if we are content with a bound on $\# \phi_\beta^{\mathrm{Tors}}(L)$ which depends on $\deg_\mathrm{i}(\beta)$.

\end{proof}

\begin{remark}
It is also worth noting that the restriction $[L:K]\leq 2$ in Theorem~\ref{th:simplefams} is only necessary because we might have $j_{\phi_\beta, v}>0$ for infinite places $v\in M_L$.  If we rule out this possibility, by considering only fibres over $\beta\in L$ such that $S_{L, \beta}$ is disjoint from $M_L^\infty$, then we obtain a uniform bound for any rational extension $L/K$.
\end{remark}


\section{Tate uniformization and the filled Julia set}
\label{sec:uniformization}

For the entirety of this section, we will assume that $A=\FF_q[T]$.

In Section~\ref{sec:localfields} we defined, given a Drinfeld module $\phi$ over a complete, algebraically closed $A$-field $\CC_v$, two submodules $\phi^0(\CC_v)\subseteq\fj{\phi}(\CC_v)$ of $\phi(\CC_v)$, with the property that $\overline{\phi^0(\CC_v)}$ is the connected component of $\overline{\fj{\phi}(\CC_v)}$ containing the identity, where the closure here is relative to the topology on $\Aberk{\CC_v}$.  As we are motivated in part by the analogy with the elliptic case, we will recall a basic fact about the component group $E/E^0$ of an elliptic curve over an extension of $\QQ_p$.  Since every elliptic curve, after some finite extension of the base, and a change of coordinates, has split multiplicative reduction, we will consider that case.  We refer the reader to \cite[Ch.~IV, V]{ataec} for more details.
\begin{theorem}[Kodaira, N\'{e}ron, Tate]
Let $L/\QQ_p$ be a finite extension, and let $E/L$ be an elliptic curve with split multiplicative reduction.  Then if $E^0$ is the connected component of the N\'{e}ron model of $E$ containing the identity, we have
\[E(\CC_p)/E^0(\CC_p)\cong \QQ/\ZZ\]
and
\[E(L)/E^0(L)\cong \ZZ/N\ZZ,\]
for some $N\in \ZZ$.
\end{theorem}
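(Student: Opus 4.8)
The plan is to invoke Tate's theory of non-archimedean uniformization for an elliptic curve with split multiplicative reduction; see \cite[Ch.~IV, V]{ataec} for the details. First I would recall that, since $E/L$ has split multiplicative reduction, there is a \emph{Tate parameter} $q\in L^\times$ with $0<|q|<1$, satisfying $v(q)=-v(j_E)$, together with a $\Gal(\overline{L}/L)$-equivariant isomorphism $E(\overline{L})\cong\overline{L}^\times/q^\ZZ$. Taking Galois invariants over any complete intermediate field $L\subseteq M\subseteq\CC_p$ then yields an isomorphism $E(M)\cong M^\times/q^\ZZ$; in particular $E(L)\cong L^\times/q^\ZZ$ and $E(\CC_p)\cong\CC_p^\times/q^\ZZ$.

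Next I would identify the identity component $E^0$ under this uniformization. The point is that, for a complete valued field $M$ as above, the image of the unit group $\Ocal_M^\times\subseteq M^\times$ in $M^\times/q^\ZZ$ is exactly $E^0(M)$: the reduction map carries the class of $u\in\Ocal_M^\times$ to the class of $\tilde u$ in $\widetilde{E}^{\,\mathrm{ns}}(\widetilde{M})\cong\mathbb{G}_\mathrm{m}(\widetilde{M})$, the identity component of the special fibre of the N\'eron model (which is of type $I_N$ with $N=v(q)$), while a representative of valuation $m$, $0\le m<N$, lands in the $m$-th component. Since $v(q)=N>0$, the subgroup $q^\ZZ$ meets $\Ocal_M^\times$ trivially, so $\Ocal_M^\times$ injects into $M^\times/q^\ZZ$ with image $E^0(M)$.

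Finally I would compute the quotients using the valuation map. Normalizing $v:L^\times\to\ZZ$ to be surjective with kernel $\Ocal_L^\times$, and noting $v(q^\ZZ)=N\ZZ$, the short exact sequence induced by $v$ gives
\[E(L)/E^0(L)\;\cong\;(L^\times/q^\ZZ)\big/\,\Ocal_L^\times\;\cong\;\ZZ/N\ZZ,\qquad N=v(q)=-v(j_E).\]
Over $\CC_p$ the only change is that the value group of $\CC_p$ is all of $\QQ$, so $v:\CC_p^\times\to\QQ$ is surjective with kernel $\Ocal_{\CC_p}^\times$; quotienting by the image $N\ZZ$ of $q^\ZZ$ gives $E(\CC_p)/E^0(\CC_p)\cong\QQ/N\ZZ\cong\QQ/\ZZ$, the last isomorphism via multiplication by $1/N$.

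The only step that is not pure bookkeeping is the second one: the precise identification of $E^0$ with the unit group under Tate's uniformization, which rests on the compatibility of the analytic uniformization with the N\'eron model and the reduction map. I expect this to be the main obstacle to a self-contained argument, but it is classical and fully treated in the cited references, so in this paper it is reasonable to cite it and sketch the rest as above.
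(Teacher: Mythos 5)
Your proposal is correct and matches the paper's treatment: the paper does not prove this classical result but simply cites \cite[Ch.~IV, V]{ataec}, and your sketch is exactly the standard Tate-uniformization argument from that reference, including the identifications $E^0(M)=$ image of $\Ocal_M^\times$ and $N=v(q)=-v(j_E)$. One minor imprecision: since $\CC_p$ is not an algebraic extension of $L$, the isomorphism $E(\CC_p)\cong\CC_p^\times/q^\ZZ$ does not come from ``taking Galois invariants'' but from the fact that the uniformizing series converge over any complete extension of $L$ (equivalently, the rigid-analytic formulation); this does not affect the conclusion.
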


Indeed, the value $N$ in the theorem is $-v(j_E)$ but it is, in many cases, enough to know simply that $E(L)/E^0(L)$ is finite, and is trivial in the case of good reduction.  The aim of this section is to prove an analogous result for filled Julia sets of Drinfeld modules at finite places.  The natural replacements for $\QQ$ and $\ZZ$ are, of course, $K$ and $A$. 
We know already that if $L/K_v$ is a finite extension, then $\fj{\phi}(L)/\phi^0(L)$ is a finite $A$-module, trivial if $\phi$ has (potentially) good reduction.  Recall that the Drinfeld module $\phi/L$ has \emph{stable reduction of rank $s$} if the coefficients of $\phi_a(x)$ are integral, for every $a\in A$, and if the reduction of $\phi$ modulo the maximal ideal induces a Drinfeld module on the residue field, and this Drinfeld module has rank $s$.  We note that this is the same as requiring that for \emph{some} non-constant $T\in A$,  $\phi_T(x)$ has integral coefficients, the coefficient corresponding to $s\deg(T)$ is a unit, and the coefficients corresponding to higher indices are not.  We will say that $\phi/L$ has \emph{potentially stable} reduction of rank $s$ if there is a finite extension of $L$, and a change of variables, after which $\phi$ has stable reduction of rank $s$.  Every Drinfeld module has potentially stable reduction.  
\begin{theorem}\label{th:KN-Drinfeld}
Let $L/K_v$ be a finite extension, where $v\in M_K^0$, and let $\phi/L$ be a Drinfeld module of rank $r$, with stable reduction of rank $r-s$, and let $\CC_v$ be a completion of an algebraic closure of $L$.   Then
\[\fj{\phi}(\CC_v)/\phi^0(\CC_v)\cong (K/A)^{s},\]
as $A$-modules,
and there exist non-zero ideals $a_1, ..., a_s\in A$ such that
\[\fj{\phi}(L)/\phi^0(L)\cong A/a_1A \cdots\oplus A/a_sA.\]
\end{theorem}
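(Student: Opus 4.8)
The plan is to carry the Tate parametrisation over to the Drinfeld setting. By Drinfeld's uniformisation theorem, a Drinfeld module over $\CC_v$ of generic rank $r$ with stable reduction of rank $r-s$ is analytically isomorphic to one of the form $E(\psi,\Lambda)$: there is a Drinfeld module $\psi/\CC_v$ of rank $r-s$ with \emph{good} reduction, a $\psi$-lattice $\Lambda\subset\CC_v$ (a finitely generated projective $A$-module of rank $s$, with $A$ acting through $\psi$, which is strongly discrete in $\CC_v$), and an $\FF_q$-linear entire surjection $e=e_\Lambda\colon\CC_v\to\CC_v$ with $\ker e=\Lambda$ and $e\circ\psi_a=\phi_a\circ e$ for all $a\in A$. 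So the first step is to record this, together with two normalisations: since the statement is invariant under $L$-isomorphism we may assume $\phi$ is literally in stable form of rank $r-s$ (integral coefficients, the coefficient at index $(r-s)\deg(T)$ a unit and those at higher indices non-units), and we choose $\psi$ literally of good reduction; then Lemma~\ref{lem:finitephi0}, applied to $\phi$ and to $\psi$ (using $j_{\phi,v}=c(\phi)$, resp.\ $j_{\psi,v}=c(\psi)=0$), gives $\phi^0(\CC_v)=\Ocal_v=\fj\psi(\CC_v)=\psi^0(\CC_v)$, where $\Ocal_v$ is the valuation ring of $\CC_v$. Now $\Ocal_v\cap\Lambda$ is a finitely generated torsion-free $A$-module which is also finite (being bounded and discrete), hence zero; so every nonzero $\lambda\in\Lambda$ has $|\lambda|>1$, and the factorisation $e(w)=w\prod_{0\neq\lambda\in\Lambda}(1-w/\lambda)$ shows that $e$ restricts to an isometric bijection on the disk $\{\,|w|<\min_{0\neq\lambda}|\lambda|\,\}$; in particular $e(\Ocal_v)=\Ocal_v=\phi^0(\CC_v)$ and $e^{-1}(\Ocal_v)=\Ocal_v+\Lambda$.

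Next I set up the torsion. From $e\circ\psi_a=\phi_a\circ e$ one gets $\phi[a]=e\bigl(\psi_a^{-1}(\Lambda)\bigr)$ for every $a\in A$, so $\phi^{\mathrm{Tors}}(\CC_v)=e(V)$, where $V=\bigcup_{0\neq a\in A}\psi_a^{-1}(\Lambda)$ is the $\psi$-divisible hull of $\Lambda$; since $e$ is injective modulo $\Lambda$, this gives $\phi^{\mathrm{Tors}}(\CC_v)\cong V/\Lambda$. The rule sending $z\in V$ with $\psi_a(z)=\lambda\in\Lambda$ to $\lambda\otimes a^{-1}$ is a well-defined surjective homomorphism $\delta\colon V\to\Lambda\otimes_A K$ of $A$-modules ($\Lambda$ being an $A$-module via $\psi$), with $\ker\delta=\psi^{\mathrm{Tors}}(\CC_v)$ since $\Lambda$ is torsion-free, and $\delta$ restricted to $\Lambda\subset V$ is the natural inclusion $\Lambda\hookrightarrow\Lambda\otimes_A K$. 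We also record that $V\cap\Ocal_v=\psi^{\mathrm{Tors}}(\CC_v)$: if $z\in V$ lies in $\fj\psi(\CC_v)=\Ocal_v$ and $\psi_a(z)=\lambda\in\Lambda$, then $\{\psi_c(\lambda)\}_{c\in A}$ is a bounded subset of $\Lambda$, hence finite by strong discreteness, hence $\lambda$ has finite $\psi$-orbit and is therefore $0$, so $z\in\psi^{\mathrm{Tors}}(\CC_v)$; the reverse inclusion is clear.

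The crux of the argument is the identification
\[e^{-1}\bigl(\fj\phi(\CC_v)\bigr)=\Ocal_v+V,\qquad\text{equivalently}\qquad \fj\phi(\CC_v)=\phi^0(\CC_v)+\phi^{\mathrm{Tors}}(\CC_v).\]
The inclusion $\supseteq$ is easy: $\Ocal_v+V$ is a $\psi$-submodule of $\CC_v$ containing $\Lambda$, and its image $\phi^0(\CC_v)+\phi^{\mathrm{Tors}}(\CC_v)$ is a submodule of $\phi(\CC_v)$ contained in the disk of Lemma~\ref{lem:juliadisk}, hence bounded, hence contained in $\fj\phi(\CC_v)$. The reverse inclusion is the substantive point, and is where I expect the main difficulty. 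It amounts to showing that the Green's function $G_\phi$, pulled back along $e_\Lambda$, vanishes exactly on $\Ocal_v+V$: writing $G_\phi(e(z))=\lim_n q^{-nr\deg(T)}\log^+\!|e(\psi_{T^n}(z))|$ and $\log|e(w)|=\log|w|+\sum_{0\neq\lambda}\log|1-w/\lambda|$, one checks that for $z\notin\Ocal_v+V$ the $\psi$-orbit $\{\psi_{T^n}(z)\}$ escapes to infinity in a way that forces this limit to be strictly positive — each shell of $\Lambda$ lying inside the disk of radius $|\psi_{T^n}(z)|$ contributes a factor exceeding $1$ — and the delicate case is when the orbit tries to hug the lattice, so that one must rule out that the $\psi$-orbit of a non-division point can approximate $\Lambda$ with the super-polynomial precision that boundedness of $e(\psi_{T^n}(z))$ would demand. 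This is the non-archimedean analogue of the fact that a point of $\CC^\times/q^{\ZZ}$ has bounded orbit under the multiplication maps iff its valuation is rational, and I would carry it out following the Green's-function estimates of Ghioca~\cite{ghioca1} and of \cite{jpaper}.

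Granting the crux, the rest is module theory. Over $\CC_v$, the second isomorphism theorem together with the steps above gives
\[\fj\phi(\CC_v)/\phi^0(\CC_v)\;\cong\;\phi^{\mathrm{Tors}}(\CC_v)\big/\bigl(\phi^{\mathrm{Tors}}(\CC_v)\cap\phi^0(\CC_v)\bigr)\;\cong\;(V/\Lambda)\big/\bigl((\psi^{\mathrm{Tors}}(\CC_v)+\Lambda)/\Lambda\bigr)\;\cong\;V\big/\bigl(\psi^{\mathrm{Tors}}(\CC_v)+\Lambda\bigr),\]
where we use $\phi^{\mathrm{Tors}}(\CC_v)\cap\phi^0(\CC_v)=e(V)\cap e(\Ocal_v)=e(V\cap\Ocal_v)=e\bigl(\psi^{\mathrm{Tors}}(\CC_v)\bigr)$. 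Applying $\delta$ identifies this last quotient with $(\Lambda\otimes_A K)/\Lambda$, which is isomorphic to $(K/A)^{s}$ because $\Lambda$ is projective of rank $s$ over the Dedekind ring $A$ and $K/\mathfrak I\cong K/A$ for every nonzero ideal $\mathfrak I\subseteq A$. For the $L$-rational statement, $\phi^0(L)=\phi^0(\CC_v)\cap L$ sits inside $\fj\phi(L)=\fj\phi(\CC_v)\cap L$, so $\fj\phi(L)/\phi^0(L)$ embeds into $\fj\phi(\CC_v)/\phi^0(\CC_v)\cong(K/A)^{s}$; it is finite by Lemma~\ref{lem:vjbound}, and a finite submodule of $(K/A)^{s}$ has $\mathfrak p$-torsion of dimension at most $s$ over $A/\mathfrak p$ for every maximal ideal $\mathfrak p$, hence is generated by at most $s$ elements. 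Since $A=\FF_q[T]$ is a principal ideal domain, the structure theorem for finitely generated $A$-modules then yields $\fj\phi(L)/\phi^0(L)\cong\bigoplus_{i=1}^{s}A/a_iA$ for suitable nonzero $a_i\in A$ (some possibly units), as required.
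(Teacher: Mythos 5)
Your overall route is the same as the paper's: Drinfeld's Tate uniformization (Theorem~\ref{th:tate}), the normalization $\phi^0(\CC_v)=\Ocal_v=e(\Lambda+\Ocal_v)$, and the reduction of the component module to $V/\bigl(\Lambda+\psi^{\mathrm{Tors}}(\CC_v)\bigr)\cong(\Lambda\otimes_A K)/\Lambda\cong(K/A)^s$ (your map $\delta$ is a tidy replacement for the paper's explicit basis construction via Lemma~\ref{lem:rigid}). But there is a genuine gap exactly at what you yourself flag as ``the substantive point'': the inclusion $\fj{\phi}(\CC_v)\subseteq\phi^0(\CC_v)+\phi^{\mathrm{Tors}}(\CC_v)$ is never proved. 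You only sketch a hoped-for Green's-function estimate ruling out $\psi$-orbits that approximate $\Lambda$ too closely, and defer it to \cite{ghioca1} and \cite{jpaper}; no such estimate is formulated, let alone verified, and this inclusion carries essentially all the content of the theorem over $\CC_v$, so as written the argument is incomplete. Moreover the analytic route you propose is far more delicate than what is needed.

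The step can be closed softly, with tools you already invoke, and this is what the paper does. By Lemma~\ref{lem:vjbound} (or simply because, over a discretely valued finite extension $L'/L$, the set $\fj{\phi}(L')$ lies in a bounded disk while $\phi^0(L')$ contains a disk of positive radius), the module $\fj{\phi}(L')/\phi^0(L')$ is finite for every finite extension $L'/L$. Since $\overline{L}$ is dense in $\CC_v$ and $\phi^0(\CC_v)=\Ocal_v$ in your normalization, every class in $\fj{\phi}(\CC_v)/\phi^0(\CC_v)$ is represented by a point of $\fj{\phi}(\overline{L})$, so this module is a direct limit of the finite modules $\fj{\phi}(L')/\phi^0(L')$ and is therefore torsion. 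Now given $x\in\fj{\phi}(\CC_v)$, choose a nonzero $a\in A$ with $\phi_a(x)\in\phi^0(\CC_v)=\Ocal_v=e(\Ocal_v)$, say $\phi_a(x)=e(u)$ with $u\in\Ocal_v$; since $\psi_a$ has integral coefficients and unit leading coefficient, all roots of $\psi_a(X)-u$ lie in $\Ocal_v$, so there is $w\in\Ocal_v$ with $\psi_a(w)=u$, and then $\phi_a\bigl(x-e(w)\bigr)=e\bigl(\psi_a(w)\bigr)-e(u)=0$, i.e. $x-e(w)\in\phi[a]=e\bigl(\psi_a^{-1}(\Lambda)\bigr)\subseteq\phi^{\mathrm{Tors}}(\CC_v)$, while $e(w)\in\Ocal_v=\phi^0(\CC_v)$. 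This gives your crux identity without any orbit-growth analysis. With that substitution the rest of your module theory goes through (including the small verification that $e(V)\cap e(\Ocal_v)=e(V\cap\Ocal_v)$, which uses $\ker e=\Lambda\subseteq V$), and the $L$-rational statement follows, as you say, from Lemma~\ref{lem:vjbound} together with the structure theorem over $A=\FF_q[T]$.
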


Note that, in the elliptic curve case, we know that $E(L)/E^0(L)\cong \ZZ/N\ZZ$, where $N=v(j_{E})$ (in the semi-stable case).  In the Drinfeld module context we do not obtain as strong a bound, although we do have $\deg(a_i)$ bounded in terms of $v(j_\phi$, by Lemma~\ref{lem:vjbound} above.

Much as the characterization of the component group of the special fibre of the N\'{e}ron model of an elliptic curve over a non-archimedean field can be obtained from the theory of Tate uniformization, our characterization in the case of Drinfeld modules makes use of the analogous theory.

Let $\psi/\CC_v$ be a Drinfeld module or rank $r_1$, and let $\Lambda\subseteq \psi(\CC_v)$ be a lattice of rank $s$.  That is, let $\Lambda\subseteq \psi(\CC_v)$ be a projective $A$-submodule of rank $s$ such that $\Lambda\cap D$ is finite, for any disk $D$.  We define an exponential map associated to $\Lambda$ by
\[e_\Lambda(x)=x\prod_{\omega\in\Lambda}\left(1-\frac{x}{\omega}\right).\]

The following theorem is due to Drinfeld \cite[Proposition 7.2]{drinfeld}.
\begin{theorem}\label{th:tate}
There is a one-to-one correspondence between isomorphism classes of Drinfeld modules $\phi/\CC_v$ of rank $r$ and potentially stable reduction of rank $r_1\leq r$  and isomorphism classes of pairs $(\psi, \Lambda)$ as above, where $\psi/\CC_v$ has rank $r_1$ and potentially good reduction, and $\Lambda$ has rank $s=r-r_1$.
\end{theorem}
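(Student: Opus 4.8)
The plan is to exhibit the two functors that underlie the claimed bijection and to check that they are mutually inverse; the single analytic ingredient of substance is the existence of an entire exponential map attached to a Drinfeld module of stable reduction. Two reductions are available from the start. Both sides of the correspondence are insensitive to $\CC_v$-isomorphism: on the lattice side one rescales $(\psi,\Lambda)$ simultaneously by some $\alpha\in\CC_v^*$, on the module side one changes the coordinate on $\mathbb{G}_\mathrm{a}$, and these operations match. And since potentially stable (resp. potentially good) reduction is preserved under finite base change, we may, after a finite extension of $L$ and a change of variable, assume throughout that the Drinfeld module in question actually has stable reduction of rank $r_1$.

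First I would treat the direction $(\psi,\Lambda)\mapsto\phi$. Given a rank-$r_1$ Drinfeld module $\psi/\CC_v$ of good reduction and an $A$-submodule (hence $\psi$-stable) lattice $\Lambda\subseteq\psi(\CC_v)$ of rank $s$, the Weierstrass-type product $e_\Lambda(x)=x\prod_{0\neq\omega\in\Lambda}(1-x/\omega)$ converges, by the discreteness of $\Lambda$, to an entire $\FF_q$-linear function $\CC_v\to\CC_v$ (the $\FF_q$-linearity from $\Lambda$ being an $\FF_q$-subspace); and since a non-constant entire function over a complete algebraically closed field is surjective, $e_\Lambda$ is a surjection with kernel exactly $\Lambda$. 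Because $\psi_a\Lambda\subseteq\Lambda$ for every $a\in A$, the rule $e_\Lambda(x)\mapsto e_\Lambda(\psi_a(x))$ is well defined on $\CC_v$; one checks it is given by an $\FF_q$-linear polynomial $\phi_a$ with $\deg\phi_a=\deg\psi_a\cdot\#(\Lambda/\psi_a\Lambda)=q^{r_1\deg a}\cdot q^{s\deg a}=q^{r\deg a}$, and that $a\mapsto\phi_a$ is a ring homomorphism by uniqueness of $\phi_a$ as a solution of $\phi_a\circ e_\Lambda=e_\Lambda\circ\psi_a$. Thus $\phi$ is a Drinfeld module of rank $r=r_1+s$, and after rescaling $(\psi,\Lambda)$ appropriately an inspection of the Newton polygon of $\phi_T$ (equivalently, of $e_\Lambda$) shows that $\phi$ has stable reduction of rank $r_1$. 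Isomorphic pairs visibly yield isomorphic $\phi$.

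Next, for the direction $\phi\mapsto(\psi,\Lambda)$, assume $\phi/\CC_v$ has stable reduction of rank $r_1$, so that $\phi_T(x)=Tx+a_1x^q+\cdots+a_{r\deg T}x^{q^{r\deg T}}$ with $a_{r_1\deg T}$ a unit and $a_j$ a non-unit for $j>r_1\deg T$. Let $\psi/\CC_v$ be the truncation $\psi_T(x)=Tx+a_1x^q+\cdots+a_{r_1\deg T}x^{q^{r_1\deg T}}$, a rank-$r_1$ Drinfeld module with good reduction and with the same reduction as $\phi$. The formal $A$-module of $\phi$ then carries a formal logarithm, convergent on a small disk, linearising it and identifying it near $0$ with that of $\psi$; composing with the formal exponential of $\psi$ and propagating outward through the functional equation $\phi_T\circ e=e\circ\psi_T$ (using surjectivity of $\phi_T$), one obtains an entire, surjective, $\FF_q$-linear map $e_\phi\colon\CC_v\to\CC_v$ with $e_\phi(x)=x+O(x^q)$ and $\phi_a\circ e_\phi=e_\phi\circ\psi_a$ for all $a$. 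Put $\Lambda=\ker e_\phi$. Comparing degrees of $\phi_a$ and $\psi_a$ gives $\#(\Lambda/\psi_a\Lambda)=q^{s\deg a}$ for every non-constant $a$, which (as $A$ is Dedekind) forces $\Lambda$ to be a discrete projective $A$-submodule of $\psi(\CC_v)$ of rank $s$; if $\phi$ has potentially good reduction then $s=0$, and $\psi\cong\phi$ has potentially good reduction. Uniqueness of $e_\phi$ — the only entire $\FF_q$-linear $f$ with $f=x+O(x^q)$ and $f\circ\psi_a=\phi_a\circ f$ — shows $(\psi,\Lambda)$ is well defined up to isomorphism and independent of the finite extension used to attain stable reduction.

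Finally, the two constructions are mutually inverse: starting from $(\psi,\Lambda)$, the exponential of the resulting $\phi$ must equal $e_\Lambda$, again by the uniqueness just noted (both are entire $\FF_q$-linear, equal to $x+O(x^q)$, and intertwine $\psi$ with $\phi$), so its kernel recovers $\Lambda$; the reverse composite is handled symmetrically. I expect the main obstacle to be the analytic construction of $e_\phi$ for a given $\phi$ of stable reduction — namely showing that the formal logarithm of $\phi$ extends to an entire surjection and that its kernel is precisely a rank-$s$ lattice — which is the heart of Drinfeld's Proposition~7.2; once that is in hand, the remaining work (degree counts, Newton-polygon estimates, and the ring-homomorphism and functoriality verifications) is routine.
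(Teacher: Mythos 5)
The paper does not actually prove this statement: it cites Drinfeld's Proposition~7.2 and records only the one-sentence indication that one builds $\phi$ from $(\psi,\Lambda)$ via the functional equation $e_\Lambda\circ\psi_a=\phi_a\circ e_\Lambda$ and then shows every $\phi$ of potentially stable reduction arises this way. Your outline of the direction $(\psi,\Lambda)\mapsto\phi$ matches this and is essentially correct (convergence and $\FF_q$-linearity of $e_\Lambda$, surjectivity of nonconstant entire maps, the degree count $q^{r_1\deg a}\cdot\#(\Lambda/\psi_a\Lambda)=q^{r\deg a}$). But your converse direction contains a genuine error: the good-reduction module $\psi$ attached to $\phi$ is \emph{not} the truncation of $\phi_T$ at the $q^{r_1\deg T}$ term. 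The truncation has the same \emph{reduction} as $\phi$ (the discarded coefficients are non-units), but not the same isomorphism class as the true $\psi$ once $r_1\geq 2$. Concretely, start from $\psi_T(x)=Tx+bx^{q^2}$ with $b$ a unit (so $j_\psi=0$) and a rank-one lattice $\Lambda$ generated by $\omega$ with $|\omega|>1$; writing $e_\Lambda(x)=x+c_1x^q+\cdots$, one computes $c_1\neq 0$ (already the finite subproduct over $\FF_q\omega$ contributes $-\omega^{1-q}$), and comparing coefficients of $x^q$ in $\phi_T\circ e_\Lambda=e_\Lambda\circ\psi_T$ gives $A_1=c_1(T^q-T)\neq 0$ for the $x^q$-coefficient of $\phi_T$. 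The truncation of this $\phi_T$ therefore has nonzero $j$-invariant and is not $\CC_v$-isomorphic to $\psi$, so your recipe recovers the wrong Tate datum.

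This is not merely a cosmetic slip, because the ``propagation outward'' step hides the same issue. For \emph{any} choice of rank-$r_1$ lift $\psi$ of the reduction there is a unique formal series $e=x+O(x^q)$ intertwining $\psi$ with $\phi$; since $\psi_T$ maps the unit disk onto itself when $\psi$ has good reduction, the relation $e(\psi_T(w))=\phi_T(e(w))$ only enlarges the domain of $e$ once $e$ is already known to converge on a disk of radius strictly greater than $1$ --- and whether that happens is exactly what distinguishes the correct $\psi$ from an incorrect one. So the analytic heart (entirety of $e$ and the identification of $\ker e$ as a rank-$s$ lattice) is not reduced to routine bookkeeping by your argument; you ultimately defer it to Drinfeld's Proposition~7.2, which is also all the paper does. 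If you want a self-contained treatment, the construction of $\psi$ has to be done together with $e$ (or via the rigid-analytic structure of $\phi$ on the unit disk), not by truncating $\phi_T$. Separately, your final uniqueness claim only shows $e$ is determined by the pair $(\phi,\psi)$; uniqueness of the pair $(\psi,\Lambda)$ attached to $\phi$ needs an additional comparison of two uniformizations, which you should not elide.
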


One proves this by constructing a Drinfeld module $\phi/\CC_v$ such that
\[e_\Lambda(\psi_a(x))=\phi_a(e_\Lambda(x))\]
for all $x\in \CC_v$ and all $a\in A$, and showing that every $\phi/\CC_v$ with potentially stable reduction of rank $r_1$ is obtained in this way.

We note that if $\Lambda\subseteq\psi(\CC_v)$ is a lattice, then $\Lambda\cap\Ocal_v$ is finite, since $\Ocal_v$ is the disk $D(0, 1)$.  On the other hand, if $\psi$ has good reduction, then $\Lambda\cap\Ocal_v$ is a submodule of $\psi(\CC_v)$, and so must in fact be trivial (since $\Lambda$ is projective and $A$ is infinite).  
  The following lemma has its origins in Lemma~4.2 of \cite{taguchi}.

\begin{lemma}\label{lem:rigid}
Let $\psi/\CC_v$ have rank $r_1$ and good reduction, and let $\Lambda\subseteq\CC_v$ be a $\psi$-lattice of rank $s$.  Then there exists a basis $\omega_1, ..., \omega_s$ for $\Lambda$ such that for all $a_1, ..., a_s\in A$,
\begin{equation}\label{eq:rigid}\log\left|\sum_{i=1}^s\psi_{a_i}(\omega_i)\right|=\max\left\{|a_1|_\infty^{r_1}\log|\omega_1|,..., |a_s|^{r_1}_\infty\log|\omega_s|\right\}.\end{equation}
\end{lemma}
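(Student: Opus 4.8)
The plan is to induct on the rank $s$ of the lattice $\Lambda$, choosing at each stage a basis vector of smallest absolute value and reducing to a quotient lattice. The identity~\eqref{eq:rigid} says roughly that the ``Newton polygon'' governing $\psi$-linear combinations is as simple as possible: there is no cancellation, and the sizes $\log|\omega_i|$ behave like independent coordinates, scaled by the degree factor $|a_i|_\infty^{r_1}$ coming from the fact that $\psi$ has rank $r_1$ (so $\psi_a(x)=a x+\cdots+ (\text{unit})x^{|a|_\infty^{r_1}}$ has leading exponent $|a|_\infty^{r_1}$).

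\textbf{Base case and the degree estimate.} For $s=1$, I would first record the key local computation: if $\psi$ has good reduction, then for any $\omega\in\CC_v$ with $|\omega|>1$ and any non-constant $a\in A$ one has $\log|\psi_a(\omega)| = |a|_\infty^{r_1}\log|\omega|$, since the leading term dominates all others (all coefficients are integral and the leading one is a unit, so on the disk $|x|>1$ the term $x^{|a|_\infty^{r_1}}$ wins in the ultrametric). The hypothesis that $\Lambda\cap\Ocal_v$ is trivial — noted just before the lemma — guarantees $|\omega_1|>1$ for the generator $\omega_1$ of a rank-one lattice, so the base case is immediate.

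\textbf{Inductive step.} Assume the result for lattices of rank $<s$. Among all nonzero $\omega\in\Lambda$, pick $\omega_1$ of \emph{maximal} $|\omega_1|^{-1}$, i.e.\ of minimal absolute value; this is possible because $\Lambda\cap D(0,R)$ is finite for every $R$, so the absolute values of nonzero lattice points form a discrete set bounded below (by something $>1$). I claim $\omega_1$ can be extended to a basis $\omega_1,\dots,\omega_s$ of $\Lambda$: minimality forces $\omega_1$ to be primitive (if $\omega_1=\psi_a(\omega')$ with $\deg a\ge 1$ then $|\omega'|<|\omega_1|$, contradiction — here I use the base-case computation in reverse), and over $A=\FF_q[T]$ a primitive vector in a projective, hence free, module extends to a basis. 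Now consider $\overline\Lambda = \Lambda/A\omega_1$, which I would want to realize as a lattice inside a suitable quotient Drinfeld module: applying Tate uniformization (Theorem~\ref{th:tate}) to the rank-one sublattice $A\omega_1$ produces $\psi'/\CC_v$ of rank $r_1$ and good reduction with $e_{A\omega_1}\colon\psi\to\psi'$, and the images $\omega_i' = e_{A\omega_1}(\omega_i)$, $i\ge 2$, form a $\psi'$-lattice $\Lambda'$ of rank $s-1$. The induction hypothesis gives~\eqref{eq:rigid} for $\Lambda'$ with the basis $\omega_2',\dots,\omega_s'$ (after possibly re-choosing that basis, which is harmless). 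One then pulls this back: write $\sum_{i=1}^s\psi_{a_i}(\omega_i) = \psi_{a_1}(\omega_1) + \eta$ with $e_{A\omega_1}(\eta) = \sum_{i\ge 2}\psi'_{a_i}(\omega_i')$, and compare sizes. By the minimality of $|\omega_1|$ and the good-reduction size formula, $|\eta|$ and $|e_{A\omega_1}(\eta)|$ differ in a controlled way (the exponential $e_{A\omega_1}$ is an isometry off the lattice and expands by the bounded factor on balls near lattice points), and the two pieces $\psi_{a_1}(\omega_1)$ and $\eta$ have strictly different sizes unless both vanish, so the ultrametric triangle inequality is an equality and yields~\eqref{eq:rigid} for $\Lambda$.

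\textbf{Main obstacle.} The delicate point is controlling how $e_{A\omega_1}$ distorts absolute values: on the complement of the lattice it is an isometry, but it is a polynomial-like infinite product, and near a lattice translate it behaves like multiplication by a factor of size roughly $\prod_{0\ne\omega\in A\omega_1}|\omega/\omega|$... more precisely, one needs the standard fact (essentially in Taguchi~\cite{taguchi}, Lemma~4.2) that $\log|e_{A\omega_1}(x)| = \log|x|$ when $x$ is far from $A\omega_1$ and that lattice points of $\Lambda$ outside $A\omega_1$ are indeed ``far''. Making this comparison clean — and verifying that the induction hypothesis, applied to $\Lambda'$, can be transported back without the basis choice spoiling the minimality property — is where the real work sits; the rest is bookkeeping with Newton polygons. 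I would isolate the distortion estimate for $e_{A\omega_1}$ as a separate sublemma and then the induction goes through mechanically.
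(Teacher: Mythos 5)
Your base case and your choice of $\omega_1$ of minimal absolute value are fine (and the minimality-implies-primitivity observation is essentially correct over $A=\FF_q[T]$), but the inductive step as designed does not work, for three concrete reasons. First, the quotient object is not in the class the lemma covers: applying Theorem~\ref{th:tate} to the pair $(\psi, A\omega_1)$ produces a Drinfeld module $\psi'$ of rank $r_1+1$, not $r_1$, and $\psi'$ has only \emph{stable} reduction of rank $r_1$ — it cannot have good reduction, since good reduction corresponds to the trivial lattice. So the induction hypothesis (which needs good reduction, and whose exponent $|a|_\infty^{r_1}$ encodes that the leading coefficient of $\psi_a$ is a unit) simply does not apply to $(\psi',\Lambda')$; to salvage the scheme you would have to prove a strictly more general statement for stable-reduction modules, where the clean size formula $\log|\psi'_a(x)|=|a|_\infty^{\mathrm{rank}}\log|x|$ is exactly what fails. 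Second, the distortion claim for $e_{A\omega_1}$ is false: the exponential is not an isometry off the lattice. For $z$ in general position one has $\log|e(z)|=\#\{\omega\in A\omega_1:|\omega|\leq|z|\}\cdot\log|z|$ (this is the computation used later in the proof of Lemma~\ref{lem:isom}), so the exponent grows with $|z|$ and the comparison of $|\eta|$ with $|e_{A\omega_1}(\eta)|$ is not the bounded, controlled distortion your argument needs. Third, and most importantly, the final ultrametric step assumes away the whole difficulty: nothing forces $|\psi_{a_1}(\omega_1)|$ and $|\eta|$ to be ``strictly different unless both vanish.'' Ties in the maximum on the right of \eqref{eq:rigid} are perfectly possible (take $|\omega_1|=|\omega_2|$ and $a_1=a_2=1$), and the assertion that no cancellation occurs in precisely that situation is the content of the lemma, not a consequence of the ultrametric inequality.

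For comparison, the paper's proof never leaves $\psi$: it builds the basis by successive minima, taking $\omega_k\in\Lambda\setminus\Lambda_{k-1}$ of minimal absolute value, and shows by Euclidean division in $A=\FF_q[T]$ that any failure of \eqref{eq:rigid} (or any failure of the rank to increase) would manufacture an element of $\Lambda\setminus\Lambda_{k-1}$ of absolute value strictly smaller than $|\omega_k|$, contradicting minimality. If you want to keep your quotient-and-induct strategy, you would need both a stable-reduction version of the statement and a precise (non-isometric) distortion lemma for $e_{A\omega_1}$; at that point the bookkeeping reproduces, in harder form, the cancellation argument you were hoping to avoid.
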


\begin{proof}
Let $\Lambda_0=\{0\}$, and
for each $i\geq 1$, choose $\omega_{i}\in \Lambda\setminus\Lambda_{i-1}$ of minimal absolute value (which exists, since $\Lambda$ is discrete).
Let $\Lambda_i\subseteq\Lambda$ be the submodule generated by $\omega_1, ..., \omega_{i}$.  We will prove, by induction, that $\dim_K(\Lambda_i\otimes_A K)=\dim(\Lambda_{i-1}\otimes_A K)+1$, and that
the collection $\omega_1, ..., \omega_{i}$ satisfies \eqref{eq:rigid}.  Note, in particular, that the first assertion shows that the construction terminates with $\Lambda_s=\Lambda$.

First, note that since $\omega_1\neq\{0\}$, we have $\dim(\Lambda_1\otimes_A K)=1$.  Also, since $|\omega_1|>1$ but the coefficients of $\psi_a$ are integral, for any $a\in A$, we have
\[\log|\psi_a(\omega_1)|=|a|_\infty^{r_1}\log|\omega_1|.\]
This proves~\eqref{eq:rigid} for $\Lambda_1$.  

Now, suppose that the assertions hold for all $i< k$, consider $\omega_{k}\in \Lambda\setminus\Lambda_{k-1}$ of minimal absolute value, and let $\Lambda_k$ be the submodule of $\Lambda$ generated by $\Lambda_{k-1}$ and $\omega_k$.  First suppose that $\dim(\Lambda_k\otimes_A K)=\dim(\Lambda_{k-1}\otimes_A K)$.  Then there exist $a_1, ..., a_{k-1}, b\in A$  such that
\[\sum_{i=1}^{k-1} \psi_{a_i}(\omega_i)=\psi_b(\omega_{k}).\]
Now, for each $i$, write $a_i=bc_i+d_i$ with $\deg(d_i)<\deg(b)$, allowing $c_i=0$.  We then have
\[\sum_{i=1}^{k-1} \psi_{d_i}(\omega_i)=\psi_b\left(\omega_{k}-\sum_{i=1}^{k-1}\psi_{c_i}(\omega_i)\right).\]
In particular,
\begin{eqnarray*}
\log\left|\omega_{k}-\sum_{i=1}^{k-1}\psi_{c_i}(\omega_i)\right|&=&|b|_\infty^{-r_1}\log\left|\sum_{i=1}^{k-1} \psi_{d_i}(\omega_i)\right|\\
&=&|b|_\infty^{-r_1}\max\left\{|d_1|_\infty^{r_1}\log|\omega_1|, ..., |d_{k-1}|_\infty^{r_1}\log|\omega_{k-1}|\right\}\\
&<&\max\left\{\log|\omega_1|, ..., \log|\omega_{k-1}|\right\}\leq \log|\omega_k|,
\end{eqnarray*}
since $|b|_\infty>|d_i|_\infty$, for all $i$, and since $\omega_{i}$ had minimal absolute value in $\Lambda\setminus\Lambda_{i-1}$.  This contradicts the minimal size of $\omega_k$ amongst elements of $\Lambda\setminus\Lambda_{k-1}$.
So we must have $\dim(\Lambda_k\otimes_A K)=\dim(\Lambda_{k-1}\otimes_A K)+1$.

 Now, we wish to show that~\eqref{eq:rigid} holds for $\Lambda_k$, that is, that for all $a_1, ..., a_{k}\in A$  we have
\[\left|\sum_{i=1}^{k}\phi_{a_i}(\omega_i)\right|=\max\left\{|a_1|_\infty^{r_1}\log|\omega_1|,..., |a_k|^{r_1}_\infty\log|\omega_k|\right\}.\]
If this fails for some choice of $a_1, ..., a_{k}\in A$, then we must have $a_{k}\neq 0$, lest we contradict the induction hypothesis.  Similarly, we must have
\[|a_{k}|_\infty^{r_1}\log\left|\omega_{k}\right|=\max\left\{|a_1|_\infty^{r_1}\log|\omega_1|, ..., |a_{k-1}|^{r_1}_\infty\log|\omega_{k-1}|\right\},\]
or else the equality holds from the ultrametric inequality.  If \eqref{eq:rigid} fails, we have
\[\log\left|\sum_{i=1}^{k}\phi_{a_i}(\omega_i)\right|<|a_{k}|_\infty^{r_1}\log|\omega_{k}|.\]
Now, for each $1\leq i\leq k-1$, let $a_i=a_{k}c_i+d_i$ with $\deg(d_i)<\deg(a_{k})$.  Then we have
\[\sum_{l=1}^{k}\phi_{a_i}(\omega_i)=\sum_{l=1}^{k-1}\phi_{d_i}(\omega_i)+\phi_{a_{k}}\left(\omega_{k}+\sum_{i=1}^{k-1}\phi_{c_i}(\omega_l)\right).\]
Since
\[\log\left|\sum_{i=1}^{k-1}\phi_{d_i}(\omega_i)\right|=\max\left\{|d_1|_\infty^{r_1}\log|\omega_1|, ..., |d_{k-1}|_\infty^{r_1}, |\xi|\right\}<|a_{k}|_\infty^{r_1}\log|\omega_{k}|,\]
it must also be the case that
\[\log\left|\phi_{a_{k}}\left(\omega_{k}+\sum_{i=1}^{k-1}\phi_{c_i}(\omega_i)\right)\right|<|a_{k}|_\infty^{r_1}\log|\omega_{k}|,\]
as well.  But this implies
\[\left|\omega_{k}+\sum_{i=1}^{k-1}\phi_{c_i}(\omega_i)\right|<|\omega_{k}|,\]
contradicting the minimality of $|\omega_k|$ amongst members of $\Lambda\setminus\Lambda_{k-1}$.  By induction, the lemma holds.
\end{proof}

The proof of Theorem~\ref{th:KN-Drinfeld} comes in three lemmas.

\begin{lemma}\label{lem:finiteness}
Under the hypotheses of Theorem~\ref{th:KN-Drinfeld}, the $A$-module $\fj{\phi}(L)/\phi^0(L)$ is finite.
\end{lemma}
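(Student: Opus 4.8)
The plan is to squeeze $\fj{\phi}(L)/\phi^0(L)$ between two fractional ideals of the ring of integers $\Ocal_L$ of $L$ and then invoke the finiteness of the residue field. First I would pin down $\phi^0(L)$. Since $\phi$ has stable reduction of rank $r-s$ over $L$, and since $v$ is a finite place so that $|T|_v\le 1$ for every $T\in A$, the discussion immediately following Lemma~\ref{lem:finitephi0} gives $j_{\phi,v}=c(\phi)$; substituting this into Lemma~\ref{lem:finitephi0} (and using $j_{\phi,v}=j_{\phi_T,v}$ at finite places) yields $\phi^0_\mathrm{Berk}=\mathcal{D}(0,1)$, whence $\phi^0(\CC_v)=\Ocal_v$ and therefore $\phi^0(L)=\Ocal_v\cap L=\Ocal_L$. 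Next I would bound $\fj{\phi}(L)$ from above: by Lemma~\ref{lem:juliadisk} we have $\fj{\phi}(\CC_v)\subseteq D(0,B_T)$ with $B_T<\infty$, because at a finite place both $\log^+|T^{-1}|$ and $\max\{\log|\xi|:\xi\in\phi[T]\}$ are finite. Hence $\fj{\phi}(L)\subseteq\{x\in L:|x|_v\le B_T\}$, which is a fractional ideal $\mathfrak{m}_L^{-M}$ of $\Ocal_L$ for some integer $M\ge 0$ (necessarily $M\ge 0$, since $\fj{\phi}(\CC_v)\supseteq\phi^0(\CC_v)=\Ocal_v$ already forces $B_T\ge 1$).

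Combining the two bounds, $\phi^0(L)=\Ocal_L\subseteq\fj{\phi}(L)\subseteq\mathfrak{m}_L^{-M}$, so $\fj{\phi}(L)/\phi^0(L)=\fj{\phi}(L)/\Ocal_L$ is a subgroup of $\mathfrak{m}_L^{-M}/\Ocal_L$, which is abstractly isomorphic (as an $\Ocal_L$-module, hence as an $A$-module) to $\Ocal_L/\mathfrak{m}_L^{M}$. As $\Ocal_L/\mathfrak{m}_L$ is a finite extension of $\FF_{q^{\deg v}}$, hence finite, and $M<\infty$, this group is finite, and so therefore is $\fj{\phi}(L)/\phi^0(L)$. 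Alternatively, and with a sharper conclusion, one may simply quote Lemma~\ref{lem:vjbound}, which already bounds $\#\bigl(\fj{\phi}(L)/\phi^0(L)\bigr)$ in terms of $v(j_\phi)$ and $s$ alone.

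I do not expect a serious obstacle here: the only steps that require care are the identification $\phi^0(L)=\Ocal_L$, which rests on the stable-reduction hypothesis together with Lemma~\ref{lem:finitephi0}, and the boundedness of $\fj{\phi}(L)$ from Lemma~\ref{lem:juliadisk}; after that the finiteness of the residue field does the work. The crude argument above only yields a bound on $\#\bigl(\fj{\phi}(L)/\phi^0(L)\bigr)$ depending on $B_T$, that is, on the ramification of $L/K_v$ and on the sizes of the $T$-torsion of $\phi$, whereas the finer dependence on $v(j_\phi)$ and $s$ comes from Lemma~\ref{lem:vjbound}. For the subsequent, more precise structural statements of Theorem~\ref{th:KN-Drinfeld} the efficient point of view is instead the Tate uniformization of Theorem~\ref{th:tate}: writing $\phi(\CC_v)\cong\psi(\CC_v)/\Lambda$ with $\psi$ of good reduction and $\Lambda$ a $\psi$-lattice of rank $s$, one has $\phi^0(\CC_v)=e_\Lambda(\Ocal_v)$, and the ``orthogonal'' basis of $\Lambda$ provided by Lemma~\ref{lem:rigid} lets one describe $\fj{\phi}(\CC_v)/\phi^0(\CC_v)$ explicitly; but for mere finiteness of the $L$-rational component module the elementary sandwich above is enough.
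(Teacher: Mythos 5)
Your proposal is correct and is essentially the paper's own argument: the paper proves this lemma by noting that $\fj{\phi}(L)$ lies in a disk of finite radius while $\phi^0(L)$ contains a disk of positive radius, so finiteness follows from $L$ being the fraction field of a discrete valuation ring (with finite residue field), and it likewise offers the alternative of adapting Lemma~\ref{lem:vjbound}. Your version merely makes the sandwich explicit via Lemma~\ref{lem:juliadisk} and the identification $\phi^0(L)=\Ocal_L$ from Lemma~\ref{lem:finitephi0}, which is a harmless elaboration of the same route.
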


\begin{proof}
Lemma~\ref{lem:vjbound} gives a bound on the size of the component module, but was proven under the assumption that $L$ is  a finite extension of the completion of $K$ at some place.  It is not hard to modify the proof, or to simply note that $\fj{\phi}(L)$ is contained in a disk of finite radius, while $\phi^0(L)$ contains a disk of positive radius.  Since $L$ is the fraction field of a discrete valuation ring, this is enough to ensure that $\fj{\phi}(L)/\phi^0(L)$ is finite.
\end{proof}

Note that, from the proof of Lemma~\ref{lem:finiteness}, we may work out an explicit upper bound on the size of $\fj{\phi}(L)/\phi^0(L)$ in terms of various data relating to $\phi/L$.  In Section~\ref{sec:global} we will work out a sharper bound, which uses more information than the trivial estimate on the smallest disk containing $\fj{\phi}(L)$.

We may now establish the characterization of $\fj{\phi}(\CC_v)/\phi^0(\CC_v)$.

\begin{lemma}\label{lem:isom}
Let $\phi/L$ have potentially stable reduction of rank $r-s$.  
Then
\[\fj{\phi}(\CC_v)/\phi^0(\CC_v)\cong (K/A)^{s}\]
as $A$-modules.
\end{lemma}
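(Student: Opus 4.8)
The plan is to pull the whole picture back through the Tate uniformization of $\phi$ and compute the filled Julia set there.

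Since $\fj{\phi}(\CC_v)/\phi^0(\CC_v)$ is an invariant of the $\CC_v$-isomorphism class of $\phi$ (if $\alpha\psi=\phi\alpha$ with $\alpha\in\CC_v^*$ then $\fj{\psi}_\mathrm{Berk}=\alpha^{-1}\fj{\phi}_\mathrm{Berk}$, and likewise for $\phi^0$), I would first reduce to the case in which $\phi$ itself has stable reduction of rank $r_1:=r-s$, so that by Theorem~\ref{th:tate} $\phi$ is the Tate uniformization of a pair $(\psi,\Lambda)$ with $\psi/\CC_v$ of rank $r_1$ and good reduction and $\Lambda\subseteq\psi(\CC_v)$ a lattice of rank $s$, with $e_\Lambda$ intertwining the $\psi$- and $\phi$-actions and hence inducing an $A$-module isomorphism $\psi(\CC_v)/\Lambda\xrightarrow{\ \sim\ }\phi(\CC_v)$. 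I would then record the elementary facts that $\fj{\psi}(\CC_v)=\psi^0(\CC_v)=\Ocal_v$ (good reduction at a finite place, read off the Newton polygon of $\psi_T$), that $\Lambda\cap\Ocal_v=\{0\}$ so $R:=\min_{0\ne\omega\in\Lambda}|\omega|>1$ (the remark preceding Lemma~\ref{lem:rigid}), that $\phi^0(\CC_v)=D(0,1)$ by Lemma~\ref{lem:finitephi0} (for stable reduction $j_{\phi_T,v}=c(\phi)$), and that $\fj{\phi}(\CC_v)\subseteq D(0,B_T)$ by Lemma~\ref{lem:juliadisk}.

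The crux is to identify $M:=e_\Lambda^{-1}(\fj{\phi}(\CC_v))$. Using that $e_\Lambda$ is $\FF_q$-linear and $\Lambda$-periodic, one checks that $|e_\Lambda(x)|=|e_\Lambda(x')|$ for a minimal representative $x'$ of $x+\Lambda$, and that this is a strictly increasing, unbounded function of $d(x):=\operatorname{dist}(x,\Lambda)=|x'|$, equal to $d(x)$ when $d(x)<R$; hence $e_\Lambda(x)\in\fj{\phi}(\CC_v)$ iff $\sup_{a\in A}d(\psi_a(x))<\infty$, and $e_\Lambda^{-1}(\phi^0(\CC_v))=\{d(x)\le 1\}=\Lambda+\Ocal_v$. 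I claim
\[ M=\Lambda_K+\Ocal_v,\qquad \Lambda_K:=\{x\in\psi(\CC_v):\psi_a(x)\in\Lambda\text{ for some }0\ne a\in A\}, \]
where $e_\Lambda$ maps $\Lambda_K/\Lambda$ isomorphically onto $\phi^{\mathrm{Tors}}(\CC_v)$. The inclusion $\supseteq$ is formal: $\Ocal_v=\fj{\psi}(\CC_v)$ maps into a submodule of $\phi(\CC_v)$ bounded by $1$, hence into $\fj{\phi}(\CC_v)$; torsion points have finite, hence bounded, orbit; and $\fj{\phi}(\CC_v)$ is a submodule. For $\subseteq$ — the main obstacle — suppose $x\notin\Lambda_K+\Ocal_v$; since $\phi_a(w)\in\Ocal_v$ for some $a$ forces $w\in\phi^{\mathrm{Tors}}(\CC_v)+\Ocal_v$ (using $\phi_a(\Ocal_v)=\Ocal_v$), this is equivalent to $d(\psi_a(x))>1$ for every $a$. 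Let $\epsilon_a$ be the error of $\psi_a(x)$ modulo $\Lambda$, so $1<|\epsilon_a|$; I must rule out $\sup_a|\epsilon_a|<\infty$. If $|\epsilon_a|\le C$ for all $a$, then since $\psi_T$ has good reduction and $|\epsilon_a|>1$ the top term of $\psi_T(\epsilon_a)$ strictly dominates, so $|\psi_T(\epsilon_a)|=|\epsilon_a|^{q^{r_1}}$; but $\psi_T(\epsilon_a)\equiv\epsilon_{Ta}\ (\mathrm{mod}\ \Lambda)$, so the lattice correction $\nu_a:=\psi_T(\epsilon_a)-\epsilon_{Ta}$ lies in the finite set $\Lambda\cap D(0,C^{q^{r_1}})$. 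Writing $\psi_a(x)-\epsilon_a$ and each $\nu_a$ in the rigid basis $\omega_1,\dots,\omega_s$ of Lemma~\ref{lem:rigid} turns the identity $\psi_{Ta}(x)-\epsilon_{Ta}=\psi_T(\psi_a(x)-\epsilon_a)+\nu_a$ into a bounded-error linear recursion for the coordinate degrees, whereas $|\psi_a(x)-\epsilon_a|=|\psi_a(x)|$ forces $\max_i q^{r_1(\deg b_i(a)-\deg a)}\log|\omega_i|=\log|x|$ for all $a$ of large degree. I expect these two constraints to be incompatible: the recursion makes each $\deg b_i(a)-\deg a$ eventually constant along $T$-powers, whereas the second relation pins $\log|x|$ to a value forced to lie in $\bigcup_i q^{r_1\ZZ}\log|\omega_i|$, and varying $a$ within a fixed degree eliminates even that residual possibility. (Alternatively, for the ``resonant'' values of $\log|x|$ one perturbs $x$ inside its coset modulo $\Lambda$ so that $\psi_T$ is generic on the errors, and uses that $T$-generic points escape, as in Lemma~\ref{lem:vjbound}.) Carrying out this incompatibility cleanly is the step I expect to require the most care.

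Granting $M=\Lambda_K+\Ocal_v$, the conclusion is bookkeeping. One gets
\[ \fj{\phi}(\CC_v)/\phi^0(\CC_v)\cong M/(\Lambda+\Ocal_v)=(\Lambda_K+\Ocal_v)/(\Lambda+\Ocal_v)\cong\Lambda_K/(\Lambda_K\cap(\Lambda+\Ocal_v)). \]
Since $\Lambda_K\cap\Ocal_v=\psi^{\mathrm{Tors}}(\CC_v)$ (an element of $\Lambda_K$ of absolute value $\le 1$ is $\psi$-torsion, being killed by the same $a$ that carries it into $\Lambda\cap\Ocal_v=\{0\}$), the modular law gives $\Lambda_K\cap(\Lambda+\Ocal_v)=\Lambda+\psi^{\mathrm{Tors}}(\CC_v)$, so the quotient is $\Lambda_K/(\Lambda+\psi^{\mathrm{Tors}}(\CC_v))\cong\phi^{\mathrm{Tors}}(\CC_v)/\psi^{\mathrm{Tors}}(\CC_v)$, where $\psi^{\mathrm{Tors}}(\CC_v)$ is embedded via $e_\Lambda$. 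Finally $\phi^{\mathrm{Tors}}(\CC_v)\cong(K/A)^r$ and $\psi^{\mathrm{Tors}}(\CC_v)\cong(K/A)^{r_1}$ over the algebraically closed $A$-field $\CC_v$; as $K/A$ is an injective $A$-module ($A$ being a principal ideal domain), the submodule $(K/A)^{r_1}$ splits off, and comparing $\mathfrak{p}$-coranks shows the complement is $(K/A)^s$. Hence $\fj{\phi}(\CC_v)/\phi^0(\CC_v)\cong(K/A)^s$.
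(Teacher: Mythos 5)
Your overall architecture is the same as the paper's: pass to stable reduction, take the Tate datum $(\psi,\Lambda)$, show $e_\Lambda^{-1}(\phi^0(\CC_v))=\Lambda+\Ocal_v$ and $e_\Lambda^{-1}(\fj{\phi}(\CC_v))=H_\Lambda+\Ocal_v$ (your $\Lambda_K$ is the paper's $H_\Lambda$), and then compute $(H_\Lambda+\Ocal_v)/(\Lambda+\Ocal_v)\cong H_\Lambda/(\Lambda+\psi^{\mathrm{Tors}})$. Your endgame is a legitimate variant: you identify this with $\phi^{\mathrm{Tors}}(\CC_v)/e(\psi^{\mathrm{Tors}}(\CC_v))$ and use divisibility/injectivity of $K/A$ plus $\mathfrak{p}$-coranks, whereas the paper builds an explicit surjection $\Psi:K^s\to H_\Lambda/(\Lambda+\psi^{\mathrm{Tors}})$ with kernel $A^s$ using the adapted basis of Lemma~\ref{lem:rigid}; either works once the set identification is in hand.

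The genuine gap is exactly the step you flag as ``the main obstacle'': the inclusion $\fj{\phi}(\CC_v)\subseteq e_\Lambda(H_\Lambda+\Ocal_v)$, i.e.\ ruling out a point $x$ with $1<d(\psi_a(x))\leq C$ for all $a\in A$. Your proposed recursion on the errors $\epsilon_a$ (comparing $\psi_T(\epsilon_a)$ with $\epsilon_{Ta}$ and bookkeeping lattice corrections in the basis of Lemma~\ref{lem:rigid}) is only a sketch, hedged with ``I expect these constraints to be incompatible,'' and the claimed incompatibility is not established: the assertion that the coordinate degrees become eventually constant along $T$-powers is unjustified, and the fallback of ``perturbing $x$ inside its coset'' changes the very point whose membership in $\fj{\phi}(\CC_v)$ is being tested, so it does not obviously help. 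The paper avoids this analytic escape argument entirely by an algebraic input you have not used: since $\overline{L}$ is dense in $\CC_v$ and $\phi^0(\CC_v)$ is the open unit disk, $\fj{\phi}(\CC_v)/\phi^0(\CC_v)\cong\fj{\phi}(\overline{L})/\phi^0(\overline{L})$, which is a direct limit of the component modules over finite extensions $L'/L$; these are finite by Lemma~\ref{lem:finiteness} (finiteness over a discretely valued field, where $\fj{\phi}$ sits in a bounded disk and $\phi^0$ contains a disk of positive radius), so the component module over $\CC_v$ is torsion. Then for $x\in\fj{\phi}(\CC_v)$ there is $a\neq 0$ with $\phi_a(x)\in\phi^0(\CC_v)=e(\Lambda+\Ocal_v)$, and lifting through $e$ together with $\phi[a]=e\bigl(\tfrac1a\Lambda+\psi[a]\bigr)$ gives $x\in e(H_\Lambda+\Ocal_v)$. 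Importing that torsionness argument (or completing your escape estimate in full) is what your proof still needs.
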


\begin{proof}  Again, without loss of generality, we suppose that $\phi/L$ has stable reduction.
We first note that, by definition, $\overline{L}$ is dense in $\CC_v$.  So if $z\in\fj{\phi}(\CC_v)$, then there is some $w\in \overline{L}$ with $|z-w|_v<1$.  But then $w\in \fj{\phi}(\overline{L})$, and $z$ and $w$ represent the same coset in $\fj{\phi}(\CC_v)/\phi^0(\CC_v)$.  This shows that the natural map $\fj{\phi}(\overline{L})\to\fj{\phi}(\CC_v)/\phi^0(\CC_v)$ is surjective.  For any subfield $F\subseteq \CC_v$, we have $\phi^0(F)=F\cap D(0, 1)$, and so the  kernel of this map is $\phi^0(\overline{L})$, ensuring
\[\fj{\phi}(\CC_v)/\phi^0(\CC_v)\cong \fj{\phi}(\overline{L})/\phi^0(\overline{L}).\]
Now, by Lemma~\ref{lem:finiteness}, we know that $\fj{\phi}(L)/\phi^0(L)$ is a finite $A$-module, and this applies as well to any finite extension of $L$.  In particular, it follows that $\fj{\phi}(\CC_v)/\phi^0(\CC_v)$ is a direct limit of finite modules, and so is a torsion module.

Now, let $(\psi, \Lambda)$ the Tate datum associated to $\phi/\CC_v$ by Theorem~\ref{th:tate}, so that $\psi/\CC_v$ has good reduction, and $e\circ \psi=\phi\circ e$, for
\begin{equation}\label{eq:exponential}e(z)=z\prod_{\substack{\omega\in \Lambda\\ \omega\neq 0}}\left(1-\frac{z}{\omega}\right).\end{equation}
Note that $e:\CC_v\to\CC_v$ is a surjective homomorphism with kernel $\Lambda$.  Now, let
\[H_\Lambda=\left\{z\in\psi(\CC_v):\psi_a(z)\in \Lambda\text{ for some }a\in A\setminus\{0\}\right\},\]
a submodule of $\psi(\CC_v)$, 
and let $\Ocal_v\subseteq \CC_v$ denote the ring of integral elements of $\CC_v$. 
We will show that $e$ induces an isomorphism 
\[(H_\Lambda+\Ocal_v)/(\Lambda+\Ocal_v)\to \fj{\phi}(\CC_v)/\phi^0(\CC_v),\]
and then study the former $A$-module.

Note that for any $z\in \CC_v$ we have
\[|e(z)|=|z|\prod_{\substack{\omega\in\Lambda\\\omega\neq 0}}\left|1-\frac{z}{\omega}\right|\leq |z|,\]
and so
  $e(\Lambda+\Ocal_v)\subseteq \Ocal_v=\phi^0(\CC_v)$.  On the other hand,  suppose that $|z|\leq |z-\omega|$ for all $\omega\in\Lambda$.  
 Then
 \[|e(z)|=|z|\prod_{\substack{|\omega|\leq |z|\\\omega\neq 0}}\left|1-\frac{z}{\omega}\right|=|z|^{\#\{\omega\in\Lambda:|\omega|\leq |z|\}}.\]
 Since every coset $z+\Lambda$ has a representative with this property, if $z\not\in \Lambda+\Ocal_v$ we have $|e(z)|>1$.
In other words, $e(\CC_v\setminus(\Lambda+\Ocal_v))\subseteq \CC_v\setminus\Ocal_v$.  But $e$ is surjective, and so we must have $e(\Lambda+\Ocal_v)=\Ocal_v$.

Since $\psi$ has good reduction, $\psi_a(\Ocal_v)\subseteq \Ocal_v$ for all $a\in A$.  As a consequence, if $z\in H_\Lambda+\Ocal_v$, then for some non-zero $a\in A$, we have $\psi_a(z)\in \Lambda+\Ocal_v$, and hence
\[\phi_a(e(z))=e(\psi_a(z))\in e(\Lambda+\Ocal_v)=\phi^0(\CC_v).\]   But $\phi_a(x)\in\fj{\phi}(\CC_v)$ for any non-zero $a\in A$ implies  $x\in \fj{\phi}(\CC_v)$.  To recapitulate, we have shown that
\[e(H_\Lambda+\Ocal_v)\subseteq \fj{\phi}(\CC_v).\]
In fact, this inclusion is an equality.
Suppose that $x\in \fj{\phi}(\CC_v)$.  Since $\fj{\phi}(\CC_v)/\phi^0(\CC_v)$ is torsion, there is some non-zero $a\in A$ such that $\phi_a(x)\in \phi^0(\CC_v)=e(\Lambda+\Ocal_v)$. In particular, $\phi_a(x)=e(z)$ for some $z\in \Lambda+\Ocal_v$.  Now, choose $w\in\CC_v$ with $\psi_a(w)=z$.  Then $\phi_a(x)-\phi_a(e(w))=0$, and so $x-e(w)\in\phi[a]$.  But $\phi[a]$ is the image under $e$ of $\frac{1}{a}\Lambda+\psi[a]\subseteq H_\Lambda+\Ocal_v$, since $\psi$ has good reduction.  So $x-e(w)\in e(H_\Lambda+\Ocal_v)$, whence $x\in e(H_\Lambda+\Ocal_v)$. 

We now have a surjective homomorphism
\[e:H_\Lambda+\Ocal_v\to\fj{\phi}(\CC_v)/\phi^0(\CC_v)\]
and, since $e(\Lambda+\Ocal_v)=\Ocal_v=\phi^0(\CC_v)$, we see that the kernel of this map is precisely $\Lambda+\Ocal_v$.  In other words, we have established the claim that
\[(H_\Lambda+\Ocal_v)/(\Lambda+\Ocal_v)\to \fj{\phi}(\CC_v)/\phi^0(\CC_v).\]
Note that since $\psi(\Ocal_v)\subseteq \Ocal_v$, and since $\Lambda\cap \Ocal_v=\{0\}$, we must have $H_\Lambda\cap \Ocal_v=\psi^{\mathrm{Tors}}$.
It now suffices to describe
\begin{eqnarray*}
(H_\Lambda+\Ocal_v)/(\Lambda+\Ocal_v)&=& (H_\Lambda+\Lambda+\Ocal_v)/(\Lambda+\Ocal_v)\\
&=& H_\Lambda/(H_\Lambda\cap (\Lambda+\Ocal_v))\\
&=&H_\Lambda/(\Lambda+\psi^{\mathrm{Tors}}),
\end{eqnarray*}
since $\Lambda\subseteq H$ and $H\cap\Ocal_v=\psi^{\mathrm{Tors}}$.

If $\omega_1, ..., \omega_s$ is a basis for $\Lambda$ over $\Lambda^{\mathrm{Tors}}$, as in Lemma~\ref{lem:rigid}, we define a map
\[\Psi:K^s\to H_\Lambda/(\Lambda+\psi^{\mathrm{Tors}})\]
by setting
\[\Psi(x_1, ..., x_s)=y+(\Lambda+\psi^{\mathrm{Tors}})\text{ if and only if }\sum \psi_{x_id}(\omega_i)=\psi_d(y)\]
whenever $d\in A$ satisfies $x_id\in A$ for all $i$.  First we must show that this is well-defined, so fix $x_1, ..., x_s\in K$ and $d_1, d_2\in A$ such that $x_id_j\in A$ for all $i$ and $j$, and suppose that
$\sum \psi_{d_jx_i}(\omega_i)=\psi_{d_j}(y_j)$ for $j=1, 2$.  Then we have
\[\psi_{d_2d_1}(y_1)=\sum\psi_{d_2d_1x_i}(\omega_i)=\psi_{d_1d_2}(y_2),\] and hence $\psi_{d_2d_2}(y_1-y_2)=0$, whereupon $y_1-y_2\in\psi^{\mathrm{Tors}}\subseteq \Lambda+\psi^{\mathrm{Tors}}(\CC_v)$

The map $\Psi$ is also clearly surjective, by the definition of $H$.  It remains to compute $\ker(\Psi)$.  But if \[\Psi(x_1, ..., x_s)=0+ \left(\Lambda+\psi^{\mathrm{Tors}}(\CC_v)\right)\] then we have some $d\in A$ such that $dx_i\in A$ for all $i$ and some $\omega+\zeta\in \Lambda+\psi^{\mathrm{Tors}}(\CC_v)$ such that \[\sum\psi_{dx_i}(\omega_i)= \psi_d(\omega+\zeta).\]  If $\omega=\sum \psi_{a_i}(\omega_i)+\xi'$, for $a_i\in A$ and $\xi'\in\psi^{\mathrm{Tors}}(\CC_v)$, then we have
\[\sum\psi_{d(a_i-x_i)}(\omega_i)=\xi-\psi_d(\zeta+\xi')\in\psi^{\mathrm{Tors}}.\]
If follows that both sides vanish, and hence $x_i=a_i\in A$ for all $i$.  In other words, $\ker(\Psi)=A^s\subseteq K^s$, proving the lemma.
\end{proof}

\begin{lemma}
Let $L/K_v$ be finite.  Then there exist $a_i\in A$ such that
\[\fj{\phi}(L)/\phi^0(L)\cong \bigoplus_{i=1}^s A/a_iA\]
as an $A$-module.
\end{lemma}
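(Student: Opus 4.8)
The plan is to reduce the statement to the structure theory of finite modules over the principal ideal domain $A=\FF_q[T]$, feeding in the two preceding lemmas. First I would observe that, since $\phi^0(L)=L\cap\phi^0(\CC_v)$ by definition, the natural homomorphism
\[
\fj{\phi}(L)/\phi^0(L)\longrightarrow \fj{\phi}(\CC_v)/\phi^0(\CC_v)
\]
is injective: any $x\in\fj{\phi}(L)$ lying in $\phi^0(\CC_v)$ already lies in $L\cap\phi^0(\CC_v)=\phi^0(L)$, so the kernel is trivial. By Lemma~\ref{lem:isom} the target is $(K/A)^s$, and by Lemma~\ref{lem:finiteness} the source $M:=\fj{\phi}(L)/\phi^0(L)$ is finite; hence $M$ is isomorphic to a finite submodule of $(K/A)^s$.

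Since $A$ is a PID, the structure theorem writes $M\cong\bigoplus_{i=1}^{t}A/a_iA$ for non-zero non-units $a_1\mid a_2\mid\cdots\mid a_t$, with $t$ the minimal number of generators of $M$. The only point needing an argument is $t\le s$. For this I would choose an irreducible $p\in A$ dividing $a_1$, hence dividing every $a_i$; then the $p$-torsion $M[p]$ is an $A/pA$-vector space of dimension exactly $t$. On the other hand, $p$-torsion is left exact, so $M[p]$ embeds into
\[
\bigl((K/A)^s\bigr)[p]=\bigl((K/A)[p]\bigr)^s\cong\bigl(\tfrac{1}{p}A/A\bigr)^s\cong (A/pA)^s,
\]
which has dimension $s$ over $A/pA$. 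Therefore $t\le s$, and padding the decomposition with $s-t$ trivial summands $A/a_iA$ (taking $a_i$ a unit, say $a_i=1$, for $t<i\le s$) produces the asserted isomorphism with exactly $s$ factors. When $\phi$ has potentially good reduction we have $s=0$ and $M=0$, so the statement is vacuous.

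I do not expect a genuine obstacle here: the substantive work is already contained in Lemmas~\ref{lem:finiteness} and~\ref{lem:isom}, and what remains is the elementary fact that a finite submodule of $(K/A)^s$ needs at most $s$ generators. For the sharper form recorded in Theorem~\ref{th:drinfeldKN}, namely that the $a_i$ can be taken of degree bounded in terms of $s$ and $-v(j_\phi)$, one combines the decomposition above with the bound on $\#M$ supplied by Lemma~\ref{lem:vjbound}, using that $\prod_{i=1}^{s}|a_i|_\infty=\#M$.
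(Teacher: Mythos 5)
Your argument is correct and follows the paper's route: the paper likewise deduces the lemma immediately from the fact that $\fj{\phi}(L)/\phi^0(L)$ is a finite submodule of $\fj{\phi}(\CC_v)/\phi^0(\CC_v)\cong (K/A)^s$ (Lemmas~\ref{lem:finiteness} and~\ref{lem:isom}), with the remaining module-theoretic bookkeeping over the PID $A=\FF_q[T]$ left implicit. You have simply written out that bookkeeping (injectivity of the comparison map, the bound $t\le s$ via $p$-torsion, and padding with trivial factors), which is fine.
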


\begin{proof}
This follows immediately from the fact that $\fj{\phi}(L)/\phi^0(L)$ is isomorphic to a finite submodule of $(K/A)^s$.
\end{proof}


\section{Elliptic curves}
\label{sec:elliptic}

We conclude with some more details on the analogy between Conjecture~\ref{question} and the case of elliptic curves.
Let $F$ be number field, let $E/F$ be an elliptic curve, and let $j_{E, v}=\log^+|j_E|_v$, for every normalized valuation $v\in M_F$.  Then $j_{\phi, v}$ is analogous to $j_{E, v}$ in that both are non-negative, vanish just in the case of potential good reduction (for finite places), and sum over all places to give the height of the moduli representative of the isomorphism class.

Now, for each ideal  $\mathfrak{a}\subseteq\ZZ$, let
\[S_E(\mathfrak{a})=\{v\in M_F^0:\mathfrak{a}(E(F_v)/E^0(F_v))\neq \{0\}\},\]
and let
\[\mu(E, N, \mathfrak{a})=\max_{\substack{S\subseteq M_F^0\\ \# S=N}}\frac{\sum_{v\in M_F^0\setminus (S_E(\mathfrak{a})\cup S)}[F_v:\QQ_v]j_{E, v}}{\sum_{v\in M_K^0\setminus S}[F_v:\QQ_v]j_{E, v}}.\]
We have, as above, $0\leq \mu(E, N, \mathfrak{a})\leq 1$, with equality on the upper bound if $N$ is sufficiently large, or if $\mathfrak{a}$ is sufficiently divisible.

The following argument comes directly from \cite{hindry-silv}.
\begin{proof}[Proof of Theorem~\ref{th:elliptic}]
Again, for simplicity, we will define $\deg(v)$, for every $v\in M_F^0$, such that $\log|x|_v=-v(x)\deg(v)$.
We have, by definition,
\[\log|\operatorname{Norm}_{F/\QQ}\Delta_E|=\sigma(E/F)\log|\operatorname{Norm}_{F/\QQ}f_E|.\]
Since $E$ is semi-stable, for each valuation $v$ we have $v(f_E)=1$, if $E$ has bad reduction at $v$, and $v(f_E)=0$ if $E$ has good reduction.   Also, $E(F_v)/E^0(F_v)$ is cyclic of order $v(j_E)$, and so if $\mathfrak{a}=(n!)\ZZ$, then we have $v(j_E)>n$ for all $v\in S_E(\mathfrak{a})$.  It follows that
\begin{eqnarray*}
\sum_{v\in M_F^0}[F_v:\QQ_v]j_{E, v}&=&\log|\operatorname{Norm}_{F/\QQ}\Delta_E|\\
&=&\sigma(E/F)\log|\operatorname{Norm}_{F/\QQ}f_E|\\
&=&\sigma(E/F)\sum_{v\in M_F^0\setminus S_E(\mathfrak{a})} [F_v:\QQ_v]\deg(v)\\&&+\sigma(E/F)\sum_{v\in S_E(\mathfrak{a})}[F_v:\QQ_v] \deg(v)\\
&\leq &\sigma(E/F)\sum_{v\in M_F^0\setminus S_E(\mathfrak{a})}[F_v:\QQ_v]j_{E, v}\\&&+\sigma(E/F)\frac{1}{n}\sum_{v\in S_E(\mathfrak{a})}[F_v:\QQ_v]j_{E, v}\\
&=& \sigma(E/F) \sum_{v\in M_F^0\setminus S_E(\mathfrak{a})} [F_v:\QQ_v]j_{E, v}\\&&+\frac{\sigma(E/F)}{n}\left(\sum_{v\in M_F^0}[F_v:\QQ_v]j_{E, v}-\sum_{v\in M_F^0\setminus S_E(\mathfrak{a})} [F_v:\QQ_v]j_{E, v}\right),
\end{eqnarray*}
and so
\[\left(1-\frac{\sigma(E/F)}{n}\right)\sum_{v\in M_F^0}[F_v:\QQ_v]j_{E, v}\leq \sigma(E/F)\left(1-\frac{1}{n}\right)\sum_{v\in M_F^0\setminus S_E(\mathfrak{a})}[F_v:\QQ_v]j_{E, v}.\]
\end{proof}



\begin{thebibliography}{9}

\bibitem{armana} C.~Armana, Torsion des modules de Drinfeld de rang 2 et formes modulaires de Drinfeld \emph{Algebra Number Theory} \textbf{6} no.~6 (2012), pp.~1239--1288. 

\bibitem{baker} M.~Baker, A finiteness theorem for canonical heights attached to rational maps over function fields, \emph{J.\ Reine Angew.\ Math.} \textbf{626} (2009), pp.~205--233.

\bibitem{baker-hsia} M.~Baker and L.-C.~Hsia, Canonical heights, transfinite diameters, and polynomial dynamics, \emph{J.\ Reine Angew.\ Math.} \textbf{585} (2005), pp.~61--92. 

\bibitem{baker-rumely} M.~Baker and R.~Rumely, \emph{Potential theory and dynamics on the Berkovich projective line}. Volume~159 of \emph{Mathematical Surveys and Monographs}, American Mathematical Society, Providence, RI, 2010.

\bibitem{denis} L.~Denis, Hauteurs canoniques et modules de Drinfeld, \emph{Math.\ Ann.} \textbf{294} (1992), no.~2, pp.~213--223. 

\bibitem{drinfeld} V.~G.~Drinfeld, Elliptic modules. \emph{Mat.\ USSR Sb.} \textbf{23} no.~4 (1974), pp.~561--592.

\bibitem{ghioca1} D.~Ghioca, The Lehmer inequality and the Mordell-Weil theorem for Drinfeld modules, \emph{J.\ Number Theory} \textbf{122} (2007), no.~1, pp.~37--68.

\bibitem{ghioca2} D.~Ghioca, The local Lehmer inequality for Drinfeld modules, \emph{J.\ Number Theory} \textbf{123} (2007),  no.~2, pp.~426--455.

\bibitem{ghioca-tucker} D.~Ghioca and T.~J.~Tucker, Siegel's Theorem for Drinfeld modules, \emph{Math.\ Ann.} \textbf{339}  (2007), no.~1, pp.~ 37--60.

\bibitem{goss} D.~Goss, \emph{Basic structures of function field arithmetic}. Springer, New York, 1998.

\bibitem{granville} A.~Granville, $ABC$ allows us to can count squarefrees, \emph{Int.\ Math.\ Res.\ Not.} \textbf{19} (1998),  pp.~991--1009.

\bibitem{hindry-silv} M.~Hindry and J.~H.~Silverman, The canonical height and integral points on elliptic curves, \emph{Invent.\ Math.} \textbf{93} (1988), pp.~419--450.

\bibitem{variation} P.~Ingram, Variation of the canonical height for a family of polynomials,
to appear in \emph{J.\ Reine Angew.\ Math.}

\bibitem{jpaper} P.~Ingram, Lower bounds on the canonical height associated to a Drinfeld module, (2012 preprint, arXiv:1210.2340)

\bibitem{mazur} B.~Mazur.
Modular curves and the Eisenstein ideal. 
\emph{Inst.\ Hautes \'{E}tudes Sci.\ Publ.\ Math.} No.~47 (1977), pp.~33--186. 

\bibitem{merel} L.~Merel,
Bornes pour la torsion des courbes elliptiques sur les corps de nombres. \emph{Invent.\ Math.} \textbf{124} (1996), no.~1-3, pp.~437--449. 

\bibitem{pal} A.~P\'{a}l, On the torsion of Drinfeld modules of rank two,
\emph{J.\ Reine Angew.\ Math.} \textbf{640} (2010), pp.~1--45.

\bibitem{poonen} B.~Poonen, Local height functions and the Mordell-Weil theorem for Drinfeld modules, \emph{Comp.\ Math.} \textbf{97} (1995), no.~3, pp.~349--368. 

\bibitem{poonenunif} B.~Poonen, Torsion in rank 1 Drinfeld modules and the uniform boundedness conjecture, \emph{Math.\ Ann.} \textbf{308} (1997), no.~4, pp.~571--586. 

\bibitem{potemine} I.~Y.~Potemine, Minimal terminal $\mathbb{Q}$-factorial models of Drinfeld coarse moduli schemes, \emph{Math.\ Phys.\ Anal.\ Geom.} \textbf{1} (1998), no.~2, pp.~171--191.

\bibitem{rosen} M.~Rosen, Formal Drinfeld Modules, \emph{J.\ Number Theory} \textbf{103} (2003), pp.~234--256.

\bibitem{rosenbook} M.~Rosen, \emph{Number theory in function fields}.  Volume~210 in \emph{Graduate Texts in Mathematics}, Springer-Verlag, New York, 2002.

\bibitem{localfields} J.-P.~Serre, \emph{Local fields}, volume~67 of \emph{Graduate Texts in Mathematics}.  Springer-Verlag, New York, 1979.


\bibitem{ataec} J.~H.~Silverman, \emph{Advanced topics in the arithmetic of elliptic curves}, volume~151 of \emph{Graduate Texts in Mathematics}.  Springer-Verlag, New York, 1994.

\bibitem{depszpiro} J.~H.~Silverman, Lang's height conjecture and Szpiro's conjecture, \emph{New York J.\ Math.} \textbf{16} (2010), pp.~1--12.

\bibitem{stichtenoth} H.~Stichtenoth, \emph{Algebraic function fields and codes, Second edition}. Volume~254 in \emph{Graduate Texts in Mathematics}, Springer-Verlag, Berlin, 2009.

\bibitem{schweizer} A.~Schweizer, On the uniform boundedness conjecture for Drinfeld modules, \emph{Math.\ Z.} \textbf{244} (2003), no.~3, pp.~601--614.

\bibitem{szpiro} L.~Szpiro, S\'{e}minaire sur les pinceaux de courbes elliptiques.  \emph{Ast\'{e}risque}, \textbf{183} (1990).

\bibitem{taguchi} Y.~Taguchi, Semi-simplicity of the Galois representations attached to Drinfeld modules over fields of ``infinite characteristicsÓ, \emph{J.\ Number Theory} \textbf{44} no.~3 (1993), pp.~292-Ð314.
\end{thebibliography}
\end{document}